\newtheorem{thm}{Theorem}[section]
\newtheorem{prop}[thm]{Proposition}
\newtheorem{lem}[thm]{Lemma}
\newtheorem{coro}[thm]{Corollary}
\newtheorem{defn}[thm]{Definition}
\newtheorem{expl}[thm]{Example}
\newtheorem{rmk}[thm]{Remark} 
\newtheorem{conj}[thm]{Conjecture}
\newtheorem{nota}[thm]{Notation}
\numberwithin{equation}{section}
\newcommand{\rbb}{\mathbb{R}}
\newcommand{\lra}{\longrightarrow}
\newcommand{\mcalk}{\mathcal{K}}
\newcommand{\mcalo}{\mathcal{O}}
\newcommand{\mcalc}{\mathcal{C}}
\newcommand{\om}{\mathcal{O}(M)}
\newcommand{\okm}{\mathcal{O}_k(M)}
\newcommand{\mcala}{\mathcal{A}}
\newcommand{\mcalf}{\mathcal{F}}
\newcommand{\mcald}{\mathcal{D}}
\newcommand{\mcalm}{\mathcal{M}}
\newcommand{\mcalb}{\mathcal{B}}
\newcommand{\mcali}{\mathcal{I}}
\newcommand{\bkm}{\mathcal{B}_k(M)}
\newcommand{\bku}{\mathcal{B}^{(k)}}
\newcommand{\psit}{\widetilde{\Psi}}
\newcommand{\hra}{\hookrightarrow}
\newcommand{\lla}{\longleftarrow}
\newcommand{\ssie}{\subseteq_{ie}}
\newcommand{\ok}{\mathcal{O}_k}
\newcommand{\ftild}{\widetilde{f}}
\newcommand{\dv}{\mcald(V)}
\newcommand{\dvp}{\mcald(V')}
\newcommand{\xa}{\mathcal{X}_A}
\newcommand{\yb}{\mathcal{Y}_B}
\newcommand{\zc}{\mathcal{Z}_C}
\newcommand{\mcale}{\mathcal{E}}
\newcommand{\mcalj}{\mathcal{J}}
\newcommand{\mcalx}{\mathcal{X}}
\newcommand{\mcaly}{\mathcal{Y}}
\newcommand{\phit}{\widetilde{\Phi}}
\newcommand{\ra}{\rightarrow}
\newcommand{\bt}{\widetilde{B}}
\newcommand{\xtild}{\widetilde{X}}
\newcommand{\bk}{\mcalb_k}
\newcommand{\bkp}{\widetilde{\mcalb}_{k, p}}  
\newcommand{\bkd}{\widetilde{\mcalb}_{k, \bullet}}  
\newcommand{\okp}{\widetilde{\mcalo}_{k, p}} 
\newcommand{\fsb}{F^{!}_{\mcalb}}   
\newcommand{\fsbpr}{F^{!}_{\mcalb'}}
\newcommand{\fso}{F^{!}_{\mcalo}} 
\newcommand{\gso}{G^{!}_{\mcalo}} 
\newcommand{\fsbp}{\tilde{F}^{!p}_{\mcalb}}   
\newcommand{\fsbd}{\tilde{F}^{!\bullet}_{\mcalb}} 
\newcommand{\fsop}{\tilde{F}^{!p}_{\mcalo}} 
\newcommand{\intcf}{\int_{\mcalc} \mcalf} 
\newcommand{\ftb}{\tilde{F}_{\mcalb}}
\newcommand{\fto}{\tilde{F}_{\mcalo}}
\newcommand{\fhb}{\hat{F}_{\mcalb}}
\newcommand{\fho}{\hat{F}_{\mcalo}}
\newcommand{\bkq}{\widehat{\mcalb}_{k, q}}
\newcommand{\okq}{\widehat{\mcalo}_{k, q}}
\newcommand{\mcalco}{\mcalc^{\text{op}}}
\newcommand{\pid}{\Pi^{\bullet}}
\newcommand{\ftpb}{\tilde{F}^p_{\mcalb}}
\newcommand{\ftpo}{\tilde{F}^p_{\mcalo}}
\newcommand{\hpd}{\underset{[p] \in \Delta}{\text{holim}}}
\newcommand{\hqd}{\underset{[q] \in \Delta}{\text{holim}}}
\newcommand{\hbkp}{\underset{\bkp (U)}{\text{holim}}}
\newcommand{\hokp}{\underset{\okp (U)}{\text{holim}}}
\newcommand{\tot}{\text{Tot}}
\newcommand{\hbkq}{\underset{\bkq (U)}{\text{holim}}}
\newcommand{\hokq}{\underset{\okq (U)}{\text{holim}}}
\newcommand{\bkb}{\overline{\mcalb}_k}
\newcommand{\zd}{Z^{\bullet}}
\newcommand{\dar}{\downarrow}
\newcommand{\fko}{\mcalf_k(\om; \mcalm)}
\title{ \textbf{Polynomial functors in manifold calculus}}
\date{}
\author{Paul Arnaud Songhafouo Tsopm\'en\'e \\
Donald Stanley}
\begin{document}
\maketitle

\begin{abstract} 
 Let $M$ be a smooth manifold, and let $\mathcal{O}(M)$ be the poset of open subsets of $M$. Manifold calculus, due to Goodwillie and Weiss, is a calculus of functors suitable for studying contravariant functors (cofunctors)
$F \colon \om \lra \text{Spaces}$ from $\om$  to the category of spaces. Weiss showed that polynomial cofunctors of degree $\leq k$ are determined by their values on $\okm$, where $\okm$ is the full subposet of $\om$ whose objects are open subsets diffeomorphic to the disjoint union of at most $k$ balls.  Afterwards Pryor showed that one can replace $\okm$ by more general subposets and still recover the same notion of polynomial cofunctor. In this paper, we generalize these results to cofunctors from $\om$ to any simplicial model category $\mcalm$. If $F_k(M)$ stands for the unordered configuration space of $k$ points in $M$,  we also show that the category of homogeneous cofunctors $\om \lra \mcalm$ of degree $k$ is weakly equivalent to the category of linear cofunctors $\mcalo (F_k(M)) \lra \mcalm$ provided that $\mcalm$ has a zero object. Using a new approach, we also show that if $\mcalm$ is a general model category and  $F \colon \okm \lra \mcalm$ is an isotopy cofunctor, then the homotopy right Kan extension of $F$ along the inclusion $\okm \hra \om$ is also an isotopy cofunctor. 
 
 % Let  $\mcalm$ be a \lq\lq nice\rq\rq{} model category. In manifold calculus a homogeneous functor $F \colon \om \lra \mcalm$ is called \textit{very good} (respectively \textit{good}) if it sends isotopy equivalences to isomorphisms (respectively weak equivalences). In previous work we studied very good homogeneous functors of degree $k$, and we proved that the category of such functors is equivalent to that of contravariant functors from the fundamental groupoid of $F_k(M)$ to $\mcalm$, where $F_k(M)$ stands for the unordered configuration space of $k$ points in $M$. In this paper we generalize this result to good homogeneous functors. 
\end{abstract}

\tableofcontents

\section{Introduction}

%This work deals with manifold calculus due to   Goodwillie and Weiss in  \cite{good_weiss99, wei99}. The paper is subdivided into two  detailed and almost disconnected parts. Part I generalizes the works of Weiss \cite{wei99} and Pryor \cite{pryor15}. Specifically, that part characterizes polynomial cofunctors \footnote{In this paper the word \lq\lq cofunctor\rq\rq{} means contravariant functor} in any simplicial model category. Part II treats, with a new approach, isotopy cofunctors in general model categories. Before we state our results, we will first recall what is done in \cite{wei99} and \cite{pryor15}. 

 Let $M$ be a smooth manifold, and let $\mathcal{O}(M)$ be the poset of open subsets of $M$. \textit{Manifold calculus} is a calculus of functors suitable for studying cofunctors \footnote{In this paper the word \lq\lq cofunctor\rq\rq{} means contravariant functor}  $F : \om \lra \text{Spaces}$ from $\om$ to the category of spaces (of which the embedding functor $\text{Emb}(-, W)$ for a fixed manifold $W$ is a prime example). So manifold calculus belongs to the world of calculus of functors, and therefore it definitely has a notion of polynomial cofunctor. Roughly speaking, a \textit{polynomial cofunctor} is a contravariant functor $\om \lra \text{Spaces}$ that satisfies an appropriate higher-order excision property, similar to the case of \cite{good03} (see Definition~\ref{poly_defn}). In \cite[Theorems 4.1, 5.1]{wei99} Weiss characterizes polynomial cofunctors. More precisely, he shows that polynomial cofunctors  of degree $\leq k$ are determined (up to equivalence of course) by their values on $\okm$, the full subposet of $\om$ whose objects are open subsets diffeomorphic to the disjoint union of at most $k$ balls. Many examples of polynomial and homogeneous cofunctors are also provided in \cite{wei99}.  Another good reference where the reader can find an introduction to manifold calculus is \cite{mun10}.

Weiss' characterization of polynomial cofunctors was generalized by Pryor in \cite{pryor15} as follows. Let $\mcalb$ be a basis  for the topology of $M$. We assume that $\mcalb$ is \textit{good}, that is, every element of $\mcalb$ is a subset of $M$ diffeomorphic to an open ball. For instance, if $M = \rbb^m$, we can take $\mcalb$ to be the collection of genuine open balls (with respect to the euclidean metric), or cubes, or simplices, or convex $d$-bodies more generally. For $k \geq 0$, we let $\bkm \subseteq \okm$ denote the full subposet whose objects are disjoint unions of at most $k$ elements from $\mcalb$.  So one possible choice of $\bkm$ is $\okm$ itself. In \cite[Theorem 6.12]{pryor15} Pryor shows, in the same spirit as Weiss, that any polynomial cofunctor $\om \lra \text{Spaces}$ of degree $\leq k$ is determined by its restriction to $\bkm$.  So one can replace $\okm$ by $\bkm$ without losing any homotopy theoretic information when forming the polynomial approximation to a cofunctor.

In this paper we generalize the aforementioned results of Weiss-Pryor  to cofunctors from $\om$ to any simplicial model category $\mcalm$. Specifically we have the following theorem, which is our first result.

\begin{thm} \label{main_thm}
Let $\mcalb$ be a good basis (see Definition~\ref{gb_defn}) for the topology of $M$, and let $\bkm \subseteq \om$ be the full subposet whose objects are  disjoint unions of at most $k$ elements from $\mcalb$. Consider a simplicial model category $\mcalm$ and a  cofunctor $F \colon \om \lra \mcalm$.  Then $F$  is good (see Definition~\ref{good_defn}) and polynomial of degree $\leq k$ (see Definition~\ref{poly_defn}) if and only if the restriction $F|\bk(M)$ is an isotopy cofunctor (see Definition~\ref{isotopy_cof_defn}) and the canonical map $F \lra (F|\bk(M))^!$ is a weak equivalence. Here $(F|\bk(M))^! \colon \om \lra \mcalm$ is the cofunctor defined as  
\[
(F|\bk(M))^!(U):= \underset{V \in \bk(U)}{\text{holim}} \; F(V).
\] 
%there exist an isotopy cofunctor $G \colon \mcalb_k(M) \lra \mcalm$ (see Definition~\ref{isotopy_cof_defn}) and a weak equivalence $F \stackrel{\sim}{\lra} G^{!}_{\mcalb}$. Here $G^{!}_{\mcalb} \colon \om \lra \mcalm$ is defined as 
%\[
%G^{!}_{\mcalb} (U) = \underset{V \in \bk(U)}{\text{holim}} \; G(V). 
%\]
\end{thm}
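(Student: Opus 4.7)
The plan is to prove the two directions by reducing both to a single structural result: if $G\colon \bkm \lra \mcalm$ is an isotopy cofunctor, then the homotopy right Kan extension $G^!\colon \om \lra \mcalm$ is good and polynomial of degree $\leq k$, and moreover the unit map $F \lra (F|\bkm)^!$ is a weak equivalence on $\bkm$ whenever $F|\bkm$ is isotopy. Once these facts are established, the $(\Leftarrow)$ direction follows immediately by transporting goodness and the polynomial property along the weak equivalence $F \srel (F|\bkm)^!$. For $(\Rightarrow)$, I would first observe that a polynomial cofunctor of degree $\leq k$ is automatically isotopy invariant (standard, by applying excision to a tubular-neighborhood style pushout square). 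Then both $F$ and $(F|\bkm)^!$ are good and polynomial of degree $\leq k$, they agree up to weak equivalence on $\bkm$, and the comparison map extends this equivalence to all of $\om$.

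The core technical work consists of three steps. First, I would show that $G^!$ is polynomial of degree $\leq k$ whenever $G$ is isotopy. The strategy is Pryor's: for a pairwise transverse family of closed sets $A_0,\dots,A_k \subset U$, one needs the appropriate cubical diagram of values $G^!(U \setminus \bigcup_{i \in S} A_i)$ to be homotopy cartesian. After unwinding the Kan extension formula $G^!(V)=\operatorname{holim}_{W \in \bk(V)}G(W)$, this reduces to a statement about homotopy limits over the posets $\bk(U \setminus \bigcup A_i)$ and their intersections; one produces a cofinal/contractible system of disjoint unions of balls in general position with respect to the $A_i$'s and invokes isotopy invariance of $G$ to identify the resulting diagram with the constant diagram. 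Second, I would prove goodness of $G^!$: isotopy invariance of $G^!$ is exactly the last theorem advertised in the introduction, and the sequential colimit/descent condition will come from standard properties of homotopy limits under filtered colimits of nerves. Third, I need the \emph{uniqueness of polynomial extension}: if $H_1,H_2\colon \om \lra \mcalm$ are both good and polynomial of degree $\leq k$ and a map $H_1 \to H_2$ restricts to a weak equivalence on $\bkm$, then it is a weak equivalence everywhere. I would prove this by handle induction on $U \in \om$: for $U$ built from finitely many handles one runs an inductive excision argument using the polynomial property, and one handles the general case by goodness, which converts $U$ into a suitable homotopy colimit of its finite-handle subopens. This gives the upgrade from $\bkm$ to all of $\om$ needed in both directions.

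The principal obstacle is the first step, namely proving that the homotopy Kan extension $G^!$ is polynomial in the simplicial model category setting. In Pryor's work for spaces one can manipulate homotopy limits of spaces very concretely and use fibration replacements to check cartesianness pointwise; over a general simplicial model category $\mcalm$ one must instead work entirely with the intrinsic notion of homotopy limit and the associated homotopy cartesian squares. The argument therefore hinges on two ingredients: a cofinality lemma identifying, up to homotopy, the indexing poset $\bk(U \setminus \bigcup_{i \in S} A_i)$ with a subposet of $\bk(U)$ consisting of balls transverse to the $A_i$, and a Fubini-type interchange of homotopy limits inside $\mcalm$. Establishing the cofinality lemma independently of point-set properties of spaces is where the bulk of the work lies; once it is in place, the interchange and the recognition of homotopy cartesianness in $\mcalm$ are formal consequences of the simplicial model category axioms.
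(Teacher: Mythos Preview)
Your overall architecture is correct and matches the paper's proof: both directions rest on (i) $G^!$ is good, (ii) $G^!$ is polynomial of degree $\leq k$, and (iii) a uniqueness lemma saying two good polynomial cofunctors agreeing on $\bkm$ agree everywhere. The paper packages these as Theorem~\ref{good_thm}, Lemma~\ref{poly_lem}, and Lemma~\ref{charac_lem} respectively, and your proposed handle-induction for (iii) is exactly Weiss's argument that the paper invokes.

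Two points deserve tightening. First, your remark that ``a polynomial cofunctor of degree $\leq k$ is automatically isotopy invariant'' is superfluous: goodness is part of the hypothesis in the $(\Rightarrow)$ direction, so $F|\bkm$ is an isotopy cofunctor immediately. Second, and more substantively, your treatment of the sequential-colimit condition in goodness (``standard properties of homotopy limits under filtered colimits of nerves'') is where the paper does real work that you are glossing over. The issue is that for a filtration $U_0 \subseteq U_1 \subseteq \cdots$ one does \emph{not} in general have $\bk(\cup_i U_i) = \cup_i \bk(U_i)$, so the Grothendieck/Fubini argument does not apply directly. The paper's fix (Lemma~\ref{gd_lem} and the end of the proof of Theorem~\ref{good_thm}) is to pass to the cofinal subposet $\overline{\mcalb}_k(U) = \{V \in \bk(U) : \overline{V} \subseteq U\}$, for which the desired equality $\overline{\mcalb}_k(\cup_i U_i) = \cup_i \overline{\mcalb}_k(U_i)$ does hold by compactness of $\overline{V}$; cofinality of this subposet is itself an instance of Theorem~\ref{sos_thm}. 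You should not expect this step to be formal.

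Minor: the closed sets $A_0,\dots,A_k$ in the definition of polynomial are only required to be pairwise \emph{disjoint}, not transverse.
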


Notice that Theorem~\ref{main_thm} implies that the category of good polynomial cofunctors $\om \lra \mcalm$ of degree $\leq k$ is \textit{weakly equivalent}, in the sense of Definition~\ref{we_defn}, to the category of isotopy cofunctors $\bk(M) \lra \mcalm$. Also notice that our definition of \textit{good cofunctor} is slightly different from the classical one (see \cite[Page 71]{wei99} or \cite[Definition 1.3.4]{mun10} or \cite[Definition 3.1]{pryor15}) as we add an extra axiom: our cofunctors are required to be also objectwise fibrant. We need that extra axiom to be able to use the homotopy invariance theorem (see Theorem~\ref{fib_cofib_thm}) and the cofinality result (see Theorem~\ref{htpy_cofinal_thm}). 
%(Those results together with the Fubini theorem are the most important properties for homotopy limits we need.) 
If one works with a category $\mcalm$ in which every object is fibrant, the extra axiom  becomes a tautology. This is the case  in  Weiss' paper \cite{wei99} where  $\mcalm =$ Spaces.  For the main ingredients in the proof of Theorem~\ref{main_thm}, see  \lq\lq Outline of the paper\rq\rq{} below. 

As mentioned earlier, our result generalizes those of Weiss. In fact, from Theorem~\ref{main_thm} with $\mcalm=\text{Spaces}$ and $\mcalb = \mcalo$, the maximal good basis, one can easily deduce the main results of \cite{wei99}, which are Theorems 4.1, 5.1 and 6.1. 

The following conjecture says that Theorem~\ref{main_thm} still holds when $\mcalm$ is replaced by a general model category. We believe in that conjecture, which could  be handled by using the same approach as that we use to show Theorem~\ref{main_thm}. The  issue with that approach is the fact that some important results/properties regarding  homotopy limits  in a general model category (for example Theorem~\ref{holim_tot_thm} and Proposition~\ref{comm_prop}) are available nowhere in the literature.  So the proof of  the conjecture may turn into a matter of homotopy limits. A good reference, where the reader can find the definition and several useful properties of homotopy limits (in a general model category of course), is  \cite[Chapter 19]{hir03}. Another good reference is \cite{dhks04}. 

\begin{conj} \label{main_conj}
Theorem~\ref{main_thm} remains true if one replaces $\mcalm$ by a general model category.  
\end{conj}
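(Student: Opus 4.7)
The plan is to replay the proof of Theorem~\ref{main_thm} verbatim, with every use of the simplicial enrichment on $\mcalm$ replaced by the corresponding construction via cosimplicial and simplicial frames in a general model category, in the sense of Hirschhorn \cite{hir03}. The global architecture of the argument, namely reducing goodness and polynomiality of $F$ to the isotopy property on $\bkm$ together with the equivalence $F \lra (F|\bkm)^!$, is insensitive to the ambient enrichment; only a handful of homotopy-limit manipulations depend on it, and these are precisely the steps flagged at the end of the introduction.

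First I would isolate the places in the proof of Theorem~\ref{main_thm} where the simplicial hypothesis is actually used. I expect these to be three: the explicit formula for homotopy limits as an end in terms of cotensors, the Fubini-type commutation statement (Proposition~\ref{comm_prop}) needed to manipulate the iterated holim appearing in $(F|\bkm)^!(U)$, and the identification of the holim over a cosimplicial diagram with its totalization (Theorem~\ref{holim_tot_thm}). The cofinality theorem (Theorem~\ref{htpy_cofinal_thm}) and the homotopy invariance theorem (Theorem~\ref{fib_cofib_thm}) already hold for arbitrary model categories by Hirschhorn, so no new work is required there.

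To handle these three items I would fix a functorial cosimplicial frame on $\mcalm$ and redefine homotopy limits via the Bousfield--Kan formula using the frame in place of the cotensor. Hirschhorn's machinery then supplies homotopy invariance on Reedy-fibrant diagrams, together with a frame-independence statement up to natural weak equivalence. The analogue of Theorem~\ref{holim_tot_thm} follows by recognising $\text{Tot}$ as the specific instance of this formula attached to a cosimplicial indexing category. What remains is a general Fubini theorem: for a bifunctor $F \colon I \times J \lra \mcalm$ with suitable Reedy fibrancy, the natural comparison map
\[
\underset{(i,j) \in I \times J}{\text{holim}} F(i,j) \lra \underset{i \in I}{\text{holim}} \; \underset{j \in J}{\text{holim}} \; F(i,j)
\]
should be a weak equivalence.

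The hard part will be this Fubini theorem. In a simplicial model category it drops out of the tensor--cotensor adjunction, but in a general model category one lacks an internal hom and must instead argue through the frames: roughly, one computes both sides using a bicosimplicial frame on $F$ and shows that the two orders of iteration produce Reedy-equivalent totalizations. I expect this to require careful bookkeeping with Reedy (co)fibrancy on products of Reedy categories, and it is the obstacle the authors themselves identify as the main gap in the literature. Once it is in place, the remaining steps of the proof of Theorem~\ref{main_thm} transport without modification, since all comparisons performed there are either objectwise weak equivalences or instances of cofinality and homotopy invariance, all of which are already available in the general model-categorical setting.
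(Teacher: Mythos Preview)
The statement you are attempting to prove is labelled a \emph{conjecture} in the paper, and the paper does not prove it. So there is no proof to compare against. What the paper does contain is (i) the authors' own remark that the simplicial argument ought to transport once the relevant homotopy-limit results are in place, naming specifically Theorem~\ref{holim_tot_thm} and Proposition~\ref{comm_prop} as the gaps; and (ii) a partial result, Theorem~\ref{iso_cof_thm}, which establishes only the isotopy-cofunctor property of $F^{!}_{\mcalo}$ in a general model category, by a completely different route through admissible families and the auxiliary category $\dv$ of Section~\ref{iso_cof_section}.

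Your outline is essentially a restatement of remark~(i): you correctly identify the three pressure points (the Bousfield--Kan formula via frames, the Tot/holim comparison, and the Fubini-type statement), and you correctly note that homotopy invariance and cofinality are already available in Hirschhorn for general model categories. But your proposal is not a proof. The Fubini theorem for iterated homotopy limits in a general model category is precisely the obstacle the authors flag as unavailable in the literature, and your treatment of it (``careful bookkeeping with Reedy (co)fibrancy on products of Reedy categories'') is a description of what would need to be done, not an argument. The same applies to the analogue of Proposition~\ref{comm_prop}: that result is about the compatibility of the Tot/holim isomorphism with change of indexing category, and in the simplicial case it is proved by tracing through the explicit equalizer description of $\underset{\mcalc}{\text{holim}}$; redoing this with frames is not automatic and you have not indicated how.

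By contrast, the paper's partial answer (Theorem~\ref{iso_cof_thm}) avoids both Fubini and Tot/holim entirely. It builds, for each $V \in \ok(U)$, a filtered poset $\dv$ of zigzags of isotopy equivalences linking $W$ to $L_1(W)$, shows $\dv$ is contractible, and then compares $F^{!}(U')$ and $F^{!}(U)$ by taking holim over $\dv$ of a diagram of explicit fibrant and cofibrant replacements. This gets only the isotopy-cofunctor part of goodness, not the filtered-colimit condition nor the polynomiality, so it does not settle the conjecture either; but it is an actual argument, whereas your proposal is a programme.
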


Now we state our second result. Given a good cofunctor $F \colon \om \lra \mcalm$ one can define its $k$th polynomial approximation, denoted $T_kF \colon \om \lra \mcalm$, as the homotopy right Kan extension of the restriction $F|\okm$ along the inclusion $\okm \hra \om$.  In order words $T_kF(U) := \underset{V \in \okm}{\text{holim}} \; F(V)$. The \lq\lq difference\rq\rq{} between $T_kF$ and $T_{k-1}F$ belongs to a nice class of cofunctors called \textit{homogeneous cofunctors} of degree $k$ (see Definition~\ref{hc_defn}). When $k =1$ we talk about \textit{linear cofunctors}. Thanks to the fact that Theorem~\ref{main_thm} holds for any good basis $\mcalb$ we choose, one can prove the following result, which  roughly states that the category of homogeneous cofunctors $\om \lra \mcalm$ of degree $k$ is weakly equivalent to the category of linear cofunctors $\mcalo (F_k(M)) \lra \mcalm$. Here $F_k(M)$ stands for  the unordered configuration space of $k$ points in $M$.

\begin{thm} \label{main2_thm}
Let $\mcalm$ be a simplicial model category. Assume that $\mcalm$ has a zero object (that is, an object which is both terminal an initial).
\begin{enumerate}
	\item[(i)] Then the category $\fko$ of homogeneous cofunctors $\om \lra  \mcalm$ of degree $k$ (see Definition~\ref{hc_defn}) is weakly equivalent (in the sense of Definition~\ref{we_defn}) to the category of linear cofunctors $\mcalo (F_k(M)) \lra \mcalm$. That is,
	\[
	\mcalf_k(\om; \mcalm) \simeq \mcalf_1 (\mcalo (F_k(M)); \mcalm). 
	\]
   \item[(ii)]   For $A \in \mcalm$ we have the weak equivalence
   \[
   \mcalf_{kA}(\om; \mcalm) \simeq \mcalf_{1A} (\mcalo (F_k(M)); \mcalm),
   \]   
   where $\mcalf_{kA}(\om; \mcalm)$  stands for the category of homogeneous cofunctors $F \colon \om \lra \mcalm$ of degree $k$ such that $F(U) \simeq  A$ for any $U$ diffeomorphic to the disjoint union of exactly $k$ open balls. 
\end{enumerate}
 
\end{thm}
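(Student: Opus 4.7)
The plan is to apply Theorem~\ref{main_thm} twice and identify both sides of the claimed equivalence with the same category of isotopy cofunctors on a common poset. On the $M$ side, choose a good basis $\mcalb$ of $M$ consisting of geodesic open balls for some auxiliary Riemannian metric; on the $F_k(M)$ side, take the basis $\mcalb'$ of $F_k(M)$ whose elements are the \emph{configuration balls}
\[
V(U_1, \ldots, U_k) := \text{image of } U_1 \times \cdots \times U_k \text{ in } F_k(M),
\]
indexed by pairwise disjoint $U_1, \ldots, U_k \in \mcalb$. Since the $U_i$ are pairwise disjoint, $\Sigma_k$ acts freely on $U_1 \times \cdots \times U_k$, so $V(U_1, \ldots, U_k)$ is diffeomorphic to $U_1 \times \cdots \times U_k$ and hence to an open ball in $\mathbb{R}^{k \dim M}$; thus $\mcalb'$ is a good basis of $F_k(M)$.

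By Theorem~\ref{main_thm} applied to $(M, \mcalb, k)$, good polynomial cofunctors $\om \lra \mcalm$ of degree $\leq k$ correspond to isotopy cofunctors $\bkm \lra \mcalm$. The homogeneity condition in Definition~\ref{hc_defn} is $T_{k-1}F \simeq 0$; by Theorem~\ref{main_thm} at level $k-1$, together with the fact that $T_{k-1}F$ agrees with $F$ on $\okm$, and in particular on $\mathcal{B}_{k-1}(M)$, this condition is equivalent to $F | \mathcal{B}_{k-1}(M) \simeq 0$ objectwise. Using the zero object of $\mcalm$, any isotopy cofunctor defined on the full subposet $\bkm \setminus \mathcal{B}_{k-1}(M)$ of disjoint unions of \emph{exactly} $k$ elements of $\mcalb$ extends uniquely by zero to such an $F$ on $\bkm$: the required morphisms into $0$ are forced since $0$ is terminal, and the isotopy axiom is then automatic. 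Symmetrically, Theorem~\ref{main_thm} applied to $(F_k(M), \mcalb', 1)$ identifies linear cofunctors $\mcalo(F_k(M)) \lra \mcalm$ with isotopy cofunctors on $\mcalb'_1(F_k(M))$ whose value on $\emptyset$ is $\simeq 0$, and extension-by-zero reduces these to isotopy cofunctors on $\mcalb'_1(F_k(M)) \setminus \{\emptyset\}$.

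The heart of the argument is then the explicit isomorphism of posets
\[
\Phi \colon \bkm \setminus \mathcal{B}_{k-1}(M) \srel \mcalb'_1(F_k(M)) \setminus \{\emptyset\}, \qquad U_1 \sqcup \cdots \sqcup U_k \longmapsto V(U_1, \ldots, U_k),
\]
together with a check that $\Phi$ transports the isotopy-cofunctor relation on one side to that on the other. An inclusion $U_1 \sqcup \cdots \sqcup U_k \hookrightarrow U'_1 \sqcup \cdots \sqcup U'_k$ forces each $U_i$ inside a unique $U'_{\sigma(i)}$ for some bijection $\sigma$, and this is precisely the containment $V(U_1, \ldots, U_k) \subseteq V(U'_1, \ldots, U'_k)$; conversely every inclusion of configuration balls arises in this way. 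An ambient isotopy of $k$ disjoint balls in $M$ induces the pointwise-product ambient isotopy of the associated configuration balls in $F_k(M)$, and a tubular neighborhood argument shows that isotopy classes of inclusions of configuration balls in $F_k(M)$ are governed by isotopies of $k$ disjoint balls in $M$. Composing the three equivalences proves (i); for (ii), the subcategory $\mathcal{F}_{kA}(\om; \mcalm)$ corresponds under $\Phi$ to isotopy cofunctors on $\mcalb'_1(F_k(M)) \setminus \{\emptyset\}$ whose value on every basis element is $\simeq A$, which by Theorem~\ref{main_thm} is exactly $\mathcal{F}_{1A}(\mcalo(F_k(M)); \mcalm)$.

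The main obstacle is the bookkeeping required to turn the outline above into statements about weakly equivalent categories in the sense of Definition~\ref{we_defn}: one must check that extension-by-zero yields an objectwise-fibrant (and hence \emph{good}) cofunctor, that the homotopy Kan extensions produced by Theorem~\ref{main_thm} intertwine with $\Phi$, and that the resulting zig-zag of equivalences is natural at the level of morphisms of cofunctors. The existence of a zero object is used precisely for the extension-by-zero step, which is why it appears as a hypothesis in the theorem.
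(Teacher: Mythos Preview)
Your approach is essentially the same as the paper's. The paper packages the reduction step into a separate Lemma~\ref{homo_lem}, which shows $\mcalf_k(\om;\mcalm)\simeq\mcalf(\bku(M);\mcalm)$ by introducing an explicit intermediate category $\mcalf_k(\bkm;\mcalm)$ (isotopy cofunctors on $\bkm$ vanishing on $\mcalb_{k-1}(M)$) and writing down the four functors $\phi_1,\psi_1,\phi_2,\psi_2$ corresponding exactly to your restriction, extension-by-zero, restriction, and homotopy right Kan extension; the proof of Theorem~\ref{main2_thm} then applies this lemma once on each side and uses the same poset isomorphism $\bku(M)\cong\mcalb'^{(1)}(F_k(M))$ you call $\Phi$.

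One small point: your justification that $T_{k-1}F\simeq 0$ is equivalent to $F|\mcalb_{k-1}(M)\simeq 0$ is not literally ``Theorem~\ref{main_thm} at level $k-1$''; rather it uses Theorem~\ref{sos_thm} to replace $\mcalo_{k-1}$ by $\mcalb_{k-1}$ in the homotopy limit defining $T_{k-1}F$, together with the observation that for $U\in\mcalb_{k-1}(M)$ the object $U$ is terminal in $\mcalb_{k-1}(U)$ (this is exactly how the paper verifies that $\phi_2$ is well defined). Your final paragraph correctly anticipates that the remaining work is bookkeeping at the level of Definition~\ref{we_defn}, and that the zero-object hypothesis is used precisely for the extension-by-zero step $\psi_1$.
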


The second part of this result will be used in \cite{paul_don17-3}. A similar result (but with a different approach) to Theorem~\ref{main2_thm} was obtained by the authors in \cite[Corollary 3.31]{paul_don17}  for \lq\lq very good homogeneous functors\rq\rq{}. Note that neither \cite[Corollary 3.31]{paul_don17} nor Theorem~\ref{main2_thm} was known before, even for $\mcalm = \text{Spaces}$. Theorem~\ref{main2_thm} is interesting in the sense that it reduces the study of homogeneous cofunctors of degree $k$ to the study of linear cofunctors, which are easier to handle. In \cite{paul_don17-3} we use it (Theorem~\ref{main2_thm}) as the starting point in the classification of homogeneous cofunctors of degree $k$.

Our third result is a partial answer to Conjecture~\ref{main_conj}.  

\begin{thm} \label{iso_cof_thm}
Let $\mcalm$ be a  model category.  Let $F \colon \okm \lra \mcalm$ be an isotopy cofunctor (see Definition~\ref{isotopy_cof_defn}). Then the cofunctor $F^{!} \colon \om \lra \mcalm$ defined as 
\begin{eqnarray} \label{fsrik_defn}
F^{!}(U) := \underset{V \in \ok(U)}{\text{holim}} \; F(V)
\end{eqnarray}
 is an isotopy cofunctor as well. 
\end{thm}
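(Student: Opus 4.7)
The plan is to convert the ambient isotopy underlying an isotopy equivalence in $\om$ into an iterated inclusion-zigzag inside $\ok(U_1)$, and then apply the isotopy cofunctor property of $F$ together with the homotopy invariance of $\text{holim}$. Fix an isotopy equivalence $U_0 \hookrightarrow U_1$ in $\om$; by isotopy extension one obtains an ambient isotopy $h_t \colon U_1 \to U_1$ with $h_0 = \id$ and $h_1$ a diffeomorphism onto an open submanifold of $U_0$. Let $r \colon \ok(U_1) \to \ok(U_0)$ denote the functor $V \mapsto h_1(V)$ (well defined since $h_1$ is a diffeomorphism, so it sends disjoint unions of balls to disjoint unions of balls) and $i \colon \ok(U_0) \hookrightarrow \ok(U_1)$ the inclusion. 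The task is to show that the induced map $F^{!}(U_1) = \text{holim}_{V \in \ok(U_1)} F(V) \to \text{holim}_{W \in \ok(U_0)} F(W) = F^{!}(U_0)$ is a weak equivalence.

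The core construction is as follows. For each $V \in \ok(U_1)$ that is small enough relative to the isotopy, choose a subdivision $0 = t_0 < t_1 < \cdots < t_N = 1$ so that the consecutive translates $h_{t_j}(V)$ and $h_{t_{j+1}}(V)$ overlap componentwise and so that each union $h_{t_j}(V) \cup h_{t_{j+1}}(V)$ is again a disjoint union of at most $k$ open balls in $U_1$. This produces a zigzag
\begin{equation*}
V \hookrightarrow V \cup h_{t_1}(V) \hookleftarrow h_{t_1}(V) \hookrightarrow \cdots \hookleftarrow h_1(V)
\end{equation*}
in $\ok(U_1)$ in which every arrow is an isotopy equivalence of open manifolds, hence becomes a weak equivalence after applying $F$. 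The crucial step is to upgrade this pointwise construction to a natural zigzag, i.e. a chain of natural transformations of cofunctors $\ok(U_1) \to \mcalm$ between $F$ and $F \circ i \circ r$ whose components are all weak equivalences. Since not every $V \in \ok(U_1)$ is small enough for the construction, I would first pass to the full subposet $\ok^{\text{sm}}(U_1) \subseteq \ok(U_1)$ of sufficiently small $V$ and apply a cofinality argument to see that the homotopy limit over $\ok^{\text{sm}}(U_1)$ still computes $F^{!}(U_1)$. Homotopy invariance of $\text{holim}$ in a general model category (as in \cite[Chapter 19]{hir03}) then turns the natural zigzag into a weak equivalence between $F^{!}(U_1)$ and $\text{holim}_{V \in \ok^{\text{sm}}(U_1)} F(h_1(V))$, and a final cofinality argument for $r$ identifies the latter with $F^{!}(U_0)$.

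The main obstacle is matching the discrete combinatorics of $\ok(U_1)$ with the continuous geometry of $h$: the subdivision parameters $t_j$ and the verification that each union $h_{t_j}(V) \cup h_{t_{j+1}}(V)$ remains a disjoint union of balls both depend on $V$, so obtaining naturality in $V$ forces one to make the subdivision uniform over the whole cofinal subposet of small $V$. The auxiliary cofinality statements also take some effort in the absence of simplicial mapping spaces, and this is presumably where the advertised \lq\lq new approach\rq\rq{} departs from the argument of Theorem~\ref{main_thm}. Once these geometric and combinatorial ingredients are in place, the rest is a formal application of standard model-categorical machinery.
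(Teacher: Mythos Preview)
Your geometric intuition---subdivide the isotopy to produce a zigzag of isotopy equivalences in $\okm$ connecting $V$ to $h_1(V)$---is exactly the idea the paper uses. But the implementation you sketch has two genuine gaps, and the paper resolves both in a way rather different from what you propose.

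First, the union $h_{t_j}(V)\cup h_{t_{j+1}}(V)$ is almost never an object of $\ok(U_1)$: two overlapping open balls do not form a ball. The paper builds the zigzag in the opposite direction, choosing for each step an open set $W_{i(i+1)}\in\okm$ with $W_{i(i+1)}\ssie L_s(W)$ for \emph{all} $s\in[a_i,a_{i+1}]$ (Definition~\ref{pw_adm_defn}); this exists by a local compactness argument (Proposition~\ref{existence_adm_prop}) and avoids the union problem entirely.

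Second, and more seriously, your proposed fix for naturality---pass to a cofinal subposet of ``small'' $V$ on which a single uniform subdivision works---fails in general. Since $U_1$ need not be compact, no finite subdivision of $[0,1]$ makes the step sizes small enough for every $V$, however small the components of $V$ are; the required fineness depends on where in $U_1$ the components sit. The paper does not attempt to find a single natural zigzag. Instead it assembles \emph{all} admissible zigzags (of arbitrary length, for all $W\in\ok(V)$) into a poset $\dv$ (Definition~\ref{dv_defn}), proves $\dv$ is filtered and hence contractible (Proposition~\ref{dv_contractible_prop}), and shows that the assignment $V\mapsto\dv$ is functorial. The comparison of $F^{!}(U')$ and $F^{!}(U)$ is then carried out by taking homotopy limits over $\dv$ of three auxiliary functors $H,P_0,P_1\colon\dv\to\mcalm$ joined by natural weak equivalences, together with two cofinality statements (Corollary~\ref{theta_zo_lem}). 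This is the ``new approach'' alluded to in the introduction: rather than rigidifying the choice of zigzag, one lets the category of all zigzags absorb the non-uniformity.
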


The method we use to prove Theorem~\ref{iso_cof_thm} is completely different from that we use to prove Theorem~\ref{main_thm} essentially because of the following. First note that in Theorem~\ref{main_thm}  $\mcalm$ is a simplicial model category, while in Theorem~\ref{iso_cof_thm} $\mcalm$ is  a general model category. To prove Theorem~\ref{main_thm}, we use several results/properties of homotopy limits in simplicial model categories such as the Fubini theorem (see Theorem~\ref{fubini_thm}), and Proposition~\ref{comm_prop}. However, Proposition~\ref{comm_prop} involves the notion of totalization of a cosimplicial object, which a priori does not make sense in a general model category. 

The key concept we introduce to prove Theorem~\ref{iso_cof_thm} is called \textit{admissible family} of open subsets. Roughly speaking, a sequence  $B = B_0, \cdots, B_n$ of open balls  is said to be \textit{admissible} if $B_i \cap B_{i+1} \neq \emptyset$ for all $i$. One can extend that definition to sequences $V=V_0, \cdots, V_n$ of objects of $\okm$ (see Definition~\ref{pw_adm_defn}). Such sequences yield  zigzags of isotopy equivalences of $\okm$ between $V$ and $V_n$, and the collection of those  form a category denoted $\dv$ (see Definition~\ref{dv_defn}). This latter category plays a crucial role in Section~\ref{iso_cof_section}. Indeed, one can \lq\lq deduce\rq\rq{} Theorem~\ref{iso_cof_thm} by applying the homotopy limit functor to appropriate diagrams in $\mcalm$ indexed by $\dv$.

\textbf{Outline of the paper} 

This paper is subdivided into two detailed and almost disconnected parts. The first one covers Sections~\ref{notation_section}, \ref{holim_simpl_section}, \ref{sos_good_section}, \ref{poly_section} and \ref{hc_section} where we prove Theorems~\ref{main_thm} and \ref{main2_thm}, while the second covers Section~\ref{iso_cof_section} where we prove Theorem~\ref{iso_cof_thm}. 

\begin{enumerate}
\item[$\bullet$] In Section~\ref{notation_section} we fix some notation. We also give a table that plays the role of a dictionary between our notation and that of Weiss-Pryor. The purpose of that table is to help the exposition of certain proofs, especially in Subsections~\ref{cof_fsp_subsection}, \ref{sos_good_subsection}.  

\item[$\bullet$] Section~\ref{holim_simpl_section} deals with homotopy limits in simplicial model categories. We follow Hirschhorn's style \cite[Chapters 18-19]{hir03}.   Since the homotopy limit is so ubiquitous in this work, we first give its definition in Subsection~\ref{holim_subsection}. Next, in the same subsection, we recall some of its  basic properties including the homotopy invariance (see Theorem~\ref{fib_cofib_thm}), the cofinality theorem (see Theorem~\ref{htpy_cofinal_thm}), and the Fubini theorem (see Theorem~\ref{fubini_thm}).  All these properties are indeed used in many places in this work. Subsection~\ref{cr_subsection} deals with cosimplicial replacement of a diagram. We prove Proposition~\ref{comm_prop}, which is the main new result of the section. It says that the canonical isomorphism $\underset{\mcald}{\text{holim}} \; F \cong \tot \; \pid F$ between the homotopy limit of a diagram $F \colon \mcald \lra \mcalm$ and the totalization of its cosimplicial replacement is natural in the following sense. If $\theta \colon \mcalc \lra \mcald$ is a functor between small categories,  then the obvious square involving the isomorphisms $\underset{\mcald}{\text{holim}} \; F \cong \tot \; \pid F$ and $\underset{\mcalc}{\text{holim}} \; F \theta \cong \tot \; \pid (F \theta)$ must commutes.  Proposition~\ref{comm_prop} will be used in the proof of Theorem~\ref{sos_thm}. 

\item[$\bullet$] Section~\ref{sos_good_section} proves two important results: Theorem~\ref{sos_thm} and Theorem~\ref{good_thm}. The first, which is the crucial ingredient in the proof of Theorem~\ref{main_thm},  roughly says that the homotopy limit  $\fsb (U) := \underset{V \in \bk(U)}{\text{holim}} \; F(V)$ does not depend on the choice of the basis $\mcalb$. Specifically, it says that  for any good basis $\mcalb$ for the topology of $M$, for any isotopy cofunctor $F \colon \okm \lra \mcalm$, the canonical map $\underset{V \in \ok(U)}{\text{holim}} \; F(V) \lra \underset{V \in \bk(U)}{\text{holim}} \; F(V)$ is a weak equivalence for all $U \in \om$. The proof of Theorem~\ref{sos_thm} goes through two big steps. The first step (see Subsection~\ref{cof_fsp_subsection}) consists of splitting $\fsb$ into smaller pieces $\fsbd  = \{\fsbp\}_{p \geq 0}$, and show that $\fsbp$ is independent of the choice of the basis $\mcalb$  for all $p$ (see Proposition~\ref{sosp_prop}). This idea of splitting comes from the paper of Weiss \cite{wei99}, and the nice thing is that  the collection $\fsbd$ turns out to be a cosimplicial object in the category of cofunctors from $\om$ to $\mcalm$. The second step, inspired by Pryor's work \cite{pryor15}, is to connect $\underset{[p] \in \Delta}{\text{holim}} \; \fsbp (U)$ and $\fsb(U)$ by a zigzag of natural weak equivalences (see Subsection~\ref{sos_good_subsection}). It is very important that every map of that zigzag is natural in both variables $U$ and $\mcalb$. This is  one of the reasons  we really need Section~\ref{holim_simpl_section} where all those maps are carefully inspected, especially the map that appears in Theorem~\ref{holim_tot_thm}.  Regarding Theorem~\ref{good_thm}, it  says that $\fsb$ is good provided that $F \colon \bkm \lra \mcalm$ is an isotopy cofunctor. This result is  a part of the proof of Theorem~\ref{main_thm}, and its proof is based on Theorem~\ref{sos_thm} and the Grothendieck construction (see Subsection~\ref{gro_const_subsection}). 

\item[$\bullet$] In Section~\ref{poly_section} we prove the main result of the first part: Theorem~\ref{main_thm}. To do this we use Theorem~\ref{sos_thm} and Theorem~\ref{good_thm} as mentioned earlier. We also use Lemmas~\ref{cofinal_lem}, \ref{poly_lem}, \ref{charac_lem}. The first lemma says that a certain functor is right cofinal. The second (which is a generalization of \cite[Theorem 4.1]{wei99})  states that if $F \colon \bkm \lra \mcalm$ is an isotopy cofunctor, then $\fsb$ is polynomial of degree $\leq k$. The proof of this result also uses the Grothendieck construction. The third lemma (which is a generalization of \cite[Theorem 5.1]{wei99}) is a characterization of polynomial cofunctors. Note that Lemma~\ref{poly_lem} and Lemma~\ref{charac_lem} are important themselves.

\item[$\bullet$] Section~\ref{hc_section} deals with homogeneous cofunctors, and is devoted to the proof of Theorem~\ref{main2_thm}. The key ingredient we need is Lemma~\ref{homo_lem}, which roughly says  that homogeneous cofunctors $\om \lra \mcalm$ of degree $k$ are determined by their values on subsets diffeomorphic to the disjoint union of exactly $k$ open balls provided that $\mcalm$ has a zero object. So Lemma~\ref{homo_lem} is also a useful result in its own right since it characterizes homogeneous cofunctors.   
Note that the proof of Lemma~\ref{homo_lem} is based on the results we obtained in Section~\ref{sos_good_section} and Section~\ref{poly_section}. 

\item[$\bullet$] Section~\ref{iso_cof_section} proves Theorem~\ref{iso_cof_thm}. To do this we use a completely different method (but rather lengthy) from that we used in previous sections. As mentioned earlier, the key concept here is that of \textit{admissible family} (see Definition~\ref{pw_adm_defn}) introduced in \cite{paul_don17}.  In Subsection~\ref{holim_subsectiong} we recall some useful properties for homotopy limits in general model categories. Subsections~\ref{dv_subsection}, \ref{hp_subsection} are preparatory subsections dealing with technical tools needed for the proof of Theorem~\ref{iso_cof_thm}. Lastly, Subsection~\ref{iso_cof_subsection} proves Theorem~\ref{iso_cof_thm}.     
\end{enumerate}

\textbf{Acknowledgements.} This work has been supported by  Pacific Institute for the Mathematical Sciences (PIMS) and the University of Regina, that the authors acknowledge. We are also grateful to P. Hirschhorn, J. Scherer, and W. Chacholski for helpful conversations (by emails) about homotopy limits and homotopy colimits.

\section{Notation}   \label{notation_section}

In this section we fix some notation. 

\begin{enumerate}
\item[$\bullet$] We let $M$ denote a smooth manifold. If $U$ is a subset of $M$, we let $\mcalo(U)$ denote the poset of open subsets of $U$, morphisms being inclusions of course. In particular one has the poset $\om$. 
\item[$\bullet$] For $k \geq 0$, and $U \in \om$, we let $\ok(U) \subseteq \mcalo(U)$ denote the full subposet whose objects are open subsets  diffeomorphic to the disjoint union of at most $k$ balls.  In particular one has the poset $\okm$. 
\item[$\bullet$] We write $\mcalo$ for the collection of all subsets of $M$ diffeomorphic to an open ball. Certainly $\mcalo$ is a full subposet of $\om$.
\item[$\bullet$] We let $\mcalb$ denote a good basis (see Definition~\ref{gb_defn}) for the topology of $M$. Clearly, one has $\mcalb \subseteq \mcalo$ for any good basis $\mcalb$. 
\item[$\bullet$] We write $\mcalm$ for a simplicial model category unless stated otherwise.
\item[$\bullet$] If $\beta \colon F \lra G$ is a natural transformation, the component of $\beta$ at $x$ will be denoted $\beta[x] \colon F(x) \lra G(x)$.  
\item[$\bullet$] We use the notation $x := \text{def}$ to state that the left hand side is defined by the right
hand side. 
\end{enumerate}

Since the proofs of some important results in this paper are based on \cite{pryor15} and \cite{wei99}, we need a dictionary of notations which is provided by the following table. The purpose of that table is then to help the exposition of certain proofs, especially in Subsections~\ref{cof_fsp_subsection}, \ref{sos_good_subsection} as we said before. The first column gives the notation that we use in this paper, while the second and the third regard the notation used in \cite{pryor15} and \cite{wei99} respectively. The notations that appear in the same row have the same meaning. The word \lq\lq nothing\rq\rq{} means that there is no notation with the same meaning in the corresponding paper. For instance, in the first row we have the notation $\mcalo$ in this paper, which stands for the maximal good basis for the topology of $M$. However there is no notation in \cite{pryor15} and \cite{wei99} that has the same meaning as $\mcalo$. 

\begin{center}
\begin{tabular}{c|c|c}
In this paper  & In Pryor's paper \cite{pryor15} & In Weiss' paper \cite{wei99} \\ \hline
$\mcalo$      &  nothing  & nothing \\ 
$\om$        &  $\om$ or just $\mcalo$ & $\om$ or just $\mcalo$ \\ 
$\okm$       & $\ok$  & $\mcalo k$ \\ 
$\bkm$ (see Definition~\ref{fsb_defn})    & $\bk$  & nothing \\ 
$\okp (M)$ (see Definition~\ref{bkp_defn}) & nothing  & $\mcali k \mcalo k_p (M)$ \\ 
$\bkp (M)$ (see Definition~\ref{bkp_defn}) & $\mcala_k (\bk)_p (M)$  & nothing \\
$\bkq (M)$  & $(\mcala_k)_q \bk (M)$  & nothing   \\ 
$\fso$ (see Example~\ref{fso_expl}) & nothing  & $E^{!}$  \\ 
$\fsb$ (see Definition~\ref{fsb_defn}) & $F^{!}$  & nothing  \\
$\ftb^p$ (see Definition~\ref{fsbp_defn})  &  $F_p$  & nothing   \\
$\ftb^{!p}$ (see Definition~\ref{fsbp_defn})  &  $F^{!}_p$ & nothing \\ 
$\fto^{!p}$ (see Definition~\ref{fsbp_defn})  & nothing & $E^{!}_p$ \\
$\fhb^q$ (see  (\ref{fhq}))  & $\widehat{F}_q$  &  nothing   \\
$\fhb^{!q}$ (see (\ref{fhsq}))  &  $\widehat{F}^{!}_q$ & nothing  
\end{tabular}
\end{center}

For the meaning of $\bkq (M)$ we refer the reader to the beginning of Subsection~\ref{sos_good_subsection}.

\section{Homotopy limits in simplicial model categories}  \label{holim_simpl_section}

In this section we recall some useful definitions and results about homotopy limits in simplicial model categories. We also prove Corollary~\ref{hir_coro} and Proposition~\ref{comm_prop}, which will be used in Section~\ref{sos_good_section}. The main reference here is Hirschhorn's book \cite[Chapter 18]{hir03}.

Let us begin with the following remark and notation. 

\begin{rmk}
For the sake of simplicity, all the functors $F \colon \mcalc \lra \mcalm$ in this section are covariant unless stated otherwise. However, in next sections our functors will be contravariant since manifold calculus deals with contravariant functors. This is not an issue of course since all the statements of this section hold for contravariant functors as well: it suffices to replace everywhere \lq\lq$\mcalc$\rq\rq{} by its opposite category \lq\lq$\mcalco$\rq\rq{}. 
\end{rmk}

\begin{nota}
The following standard notations will be used only in this section. 
\begin{enumerate}
\item[(i)] We let $\Delta$ denote the category whose objects are $[n] = \{0, \cdots, n\}, n \geq 0$, and whose morphisms are non-decreasing maps.  For $n \geq 0$, we let $\Delta[n]$ denote the simplicial set defined as $(\Delta[n])_p = \underset{\Delta}{\text{hom}} ([p], [n])$. 
\item[(ii)] If $\mcalc$ is a category, we write $N(\mcalc)$ for the nerve of $\mcalc$. If $c \in \mcalc$, we let $\mcalc \downarrow c$ denote the over category. An object of $\mcalc \downarrow c$ consists of a pair $(x, f)$, where $x \in \mcalc$ and $f \colon x \lra c$ is a morphism of $\mcalc$. A morphism from $(x, f)$ to $(x', f')$ consists of a morphism $g \colon x \lra x'$ of $\mcalc$ such that the obvious triangle commutes. 
\end{enumerate}
\end{nota}

\subsection{Homotopy limits}  \label{holim_subsection}

Here we  recall the definition of  the homotopy limit of a diagram in a simplicial model category. Next we recall some useful results due to  P. Hirschhorn \cite{hir03}. 

%\begin{defn} \label{cof_defn}
%Let $\mcalc$ and $\mcalm$ be categories. A \emph{cofunctor} from $\mcalc$ to $\mcalm$ is just a contravariant functor from $\mcalc$ to $\mcalm$. 
%\end{defn}

% Reference for the defn of holim: \cite[Definition 18.1.8]{hir03} \label{holim_defn}

\begin{defn} \label{holim_defn}
Let $\mcalm$ be a simplicial model category, and let $\mcalc$ be a small category. Consider a covariant functor $F \colon \mcalc \lra \mcalm$. The \emph{homotopy limit} of $F$, denoted $\underset{\mcalc}{\text{holim}} \; F$, is the object of $\mcalm$ defined to be the equalizer of the maps
\[
\xymatrix{\underset{c \in \mcalc}{\prod} (F(c))^{N(\mcalc \downarrow c)} \ar@<1ex>[r]^-{\phi} \ar@<-1ex>[r]_-{\psi}  & \underset{(f \colon c \ra c') \in \mcalc}{\prod} (F(c'))^{N(\mcalc \downarrow c)}. }
\]
\footnote{One of the axioms of the definition of a \textit{simplicial model category} $\mcalm$ \cite[Definition 9.1.6]{hir03} says that for an object $Y \in \mcalm$ and every simplicial set $K$, there exists an object $Y^K$ of $\mcalm$} Here $\phi$ and $\psi$ are defined as follows. Let $f \colon c \lra c'$ be a morphism of $\mcalc$. 
\begin{enumerate}
\item[$\bullet$] The projection of $\phi$ on the factor indexed by $f$ is the following composition where the first map is a projection
\[
\xymatrix{\underset{c \in \mcalc}{\prod} (F(c))^{N(\mcalc \downarrow c)} \ar[r] &  (F(c))^{N(\mcalc \downarrow c)} \ar[rr]^-{(F(f))^{N(\mcalc \downarrow c)}}  &   &  (F(c'))^{N(\mcalc \downarrow c)}.}
\]
\item[$\bullet$] The projection of $\psi$ on the factor indexed by $f$ is the following composition where the first map is again a projection. 
\[
\xymatrix{\underset{c \in \mcalc}{\prod} (F(c))^{N(\mcalc \downarrow c)} \ar[r] &  (F(c'))^{N(\mcalc \downarrow c')} \ar[rr]^-{(F(c'))^{N(\mcalc \downarrow f)}}  &   &  (F(c'))^{N(\mcalc \downarrow c)}.}
\]
\end{enumerate}
\end{defn}
 
\begin{defn}  Let $\mcalm$ be a category. 
Let $\mcalc$ and $\mcald$ be small categories, and let $\theta \colon \mcalc \lra \mcald$ be a functor. If $F \colon \mcald \lra \mcalm$ is an $\mcald$-diagram in $\mcalm$, then the composition $F \circ \theta \colon \mcalc \lra \mcalm$ is called the \emph{$\mcalc$-diagram in $\mcalm$ induced by $F$}, and it is denoted $\theta^* F$.   That is, 
\begin{eqnarray} \label{theta_starx}
\theta^* F := F \circ \theta. 
\end{eqnarray}
\end{defn}

The following proposition will be used in many places in this paper. Especially, we will use it to define morphisms between homotopy limits of diagrams of different shape. Also it will be used to show that certain diagrams commute. That proposition regards the change of the indexing category of a homotopy limit. 

\begin{prop}  \label{induced_holim_prop}
Let $\mcalm$ be a simplicial model category, and  let $\theta \colon \mcalc \lra \mcald$ be a functor between two small categories.  If $F \colon \mcald \lra \mcalm$ is an $\mcald$-diagram, then there is a canonical map 
\begin{eqnarray} \label{thetax}
[\theta; F] \colon \underset{\mcald}{\text{holim}}\; F \lra \underset{\mcalc}{\text{holim}}\; \theta^*F.
\end{eqnarray}
Furthermore, this map is natural in both variables $\theta$ and $F$. The naturality in $\theta$ says that if $\beta \colon \theta \lra \theta'$ is a natural transformation, then the following square commutes. 
\begin{eqnarray} \label{nat_theta}
\xymatrix{\underset{\mcald}{\text{holim}}\; F  \ar[rr]^-{[\theta; F]} &  &  \underset{\mcalc}{\text{holim}}\; \theta^*F  \\
\underset{\mcald}{\text{holim}}\; F  \ar[rr]_-{[\theta'; F]} \ar[u]^-{id} &  & \underset{\mcalc}{\text{holim}}\; \theta'^{*}F \ar[u]_-{\text{holim}(F\beta)} }
\end{eqnarray}
Here we have assumed $F$ contravariant (in the covariant case, one has to reverse the righthand vertical map). Regarding the naturality in $F$, it says that if $\eta \colon F \lra F'$ is a natural transformation, then the following square commutes. 
\begin{eqnarray} \label{theta_square}
\xymatrix{ \underset{\mcald}{\text{holim}}\; F \ar[rr]^-{[\theta; F]} \ar[d]_-{\text{holim}(\eta)}  &  &  \underset{\mcalc}{\text{holim}}\; \theta^*F \ar[d]^-{\text{holim}(\theta^*\eta)}  \\
 \underset{\mcald}{\text{holim}}\; F' \ar[rr]_-{[\theta; F']} &  &  \underset{\mcalc}{\text{holim}}\; \theta^*F'.}
\end{eqnarray}
\end{prop}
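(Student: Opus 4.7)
The plan is to construct $[\theta; F]$ directly from the equalizer description of the homotopy limit given in Definition~\ref{holim_defn}, and then verify the two naturality squares by unwinding the definition factor by factor. First I would note that for each $c \in \mcalc$, the functor $\theta$ induces a functor of slice categories $\theta_c \colon \mcalc \downarrow c \lra \mcald \downarrow \theta(c)$ sending $(x, f \colon x \ra c)$ to $(\theta(x), \theta(f) \colon \theta(x) \ra \theta(c))$. Taking nerves yields a simplicial map $N(\theta_c) \colon N(\mcalc \downarrow c) \lra N(\mcald \downarrow \theta(c))$, which by contravariance of the exponential in its exponent gives
\[
(F(\theta(c)))^{N(\mcald \downarrow \theta(c))} \lra (F(\theta(c)))^{N(\mcalc \downarrow c)} = ((\theta^* F)(c))^{N(\mcalc \downarrow c)}.
\]
Composing with the projection of the big product onto the $\theta(c)$-factor and then assembling over all $c \in \mcalc$ produces a map
\[
\Theta \colon \underset{d \in \mcald}{\prod} (F(d))^{N(\mcald \downarrow d)} \lra \underset{c \in \mcalc}{\prod} ((\theta^* F)(c))^{N(\mcalc \downarrow c)}.
\]

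Next I would show that $\Theta$ carries the equalizer inside the source (namely $\underset{\mcald}{\text{holim}}\, F$) into the equalizer inside the target (namely $\underset{\mcalc}{\text{holim}}\, \theta^*F$). For a morphism $g \colon c \ra c'$ of $\mcalc$ one has the identity of functors $\theta_{c'} \circ (- \circ g) = (- \circ \theta(g)) \circ \theta_c$ from $\mcalc \downarrow c$ to $\mcald \downarrow \theta(c')$, so applying $N(-)$ and then $(F(\theta(c')))^{-}$ gives a commuting square that precisely matches what is needed to equalize the $\phi$ and $\psi$ built from $\theta^*F$. The universal property of the equalizer then yields the desired canonical map $[\theta; F]$.

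For naturality in $F$, given $\eta \colon F \lra F'$ I would check the square~(\ref{theta_square}) one $c$-factor at a time: on that factor both composites reduce to first applying $\eta_{\theta(c)}$ to $F(\theta(c))$ and then restricting along $N(\theta_c)$, which is commutative by bifunctoriality of $(-)^{(-)}$. For naturality in $\theta$, given $\beta \colon \theta \lra \theta'$ one sees that $\beta$ provides, for each $c$, a morphism in $\mcald \downarrow \theta'(c)$ from $(\theta(c), \beta_c)$ to $(\theta'(c), \mathrm{id})$ that assembles into a simplicial homotopy between $N(\theta_c)$ and $N(\theta'_c)$, and the component $F(\beta_c)$ (reversed because of the contravariance convention under which the square is stated) is exactly the piece used to form $\mathrm{holim}(F\beta)$ in (\ref{nat_theta}). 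Square~(\ref{nat_theta}) then commutes after projection onto each $c$-factor of the target equalizer. The main obstacle I anticipate is purely bookkeeping: making the naturality in $\theta$ precise requires carefully tracking how the slice-category functors $\theta_c, \theta'_c$, the natural transformation $\beta$, and the contravariance of $F$ interact, since the right-hand vertical map in (\ref{nat_theta}) is not induced by $\beta$ on the indexing category but by $F\beta$ on the diagram; once this identification is spelled out, the check is a formal manipulation inside the universal property of the equalizer.
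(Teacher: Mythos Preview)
Your construction of $[\theta;F]$ is exactly the paper's: define $\Psi$ (your $\Theta$) on products by projecting to the $\theta(c)$-factor and then restricting along $N(\mcalc\downarrow c)\to N(\mcald\downarrow\theta(c))$, check it respects the equalizer diagrams, and invoke the universal property. The paper packages the ``respects the equalizer'' step as a general observation (condition~(\ref{eq_cond})) and then simply declares the two naturality squares ``straightforward,'' whereas you spell out the factor-by-factor verifications; but the argument is the same. One small remark: in your naturality-in-$\theta$ paragraph the phrase ``simplicial homotopy between $N(\theta_c)$ and $N(\theta'_c)$'' is slightly off, since these land in nerves of different slice categories; what you actually need (and what your description of $F(\beta_c)$ already points to) is the natural transformation between the two composites $\mcalc\downarrow c \to \mcald\downarrow\theta'(c)$ induced by $\beta$, and then functoriality of $(-)^{N(-)}$ and $F$. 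This is a terminological quibble rather than a gap.
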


\begin{proof}
The construction of $[\theta; F]$ comes from the following observation, which provides a nice way to define a map between two equalizers. This observation will be also used in Subsection~\ref{cr_subsection}.  

 Consider the following diagrams in $\mcalm$. 
\[
\xymatrix{A \ar@<1ex>[r]^-{\alpha} \ar@<-1ex>[r]_-{\beta} & B}  \qquad \xymatrix{A' \ar@<1ex>[r]^-{\alpha'} \ar@<-1ex>[r]_-{\beta'} & B'}.  
\]
If $\Psi \colon A \lra A'$ is a map that satisfies the property
\begin{eqnarray} \label{eq_cond}
(\text{for all $g \colon E \lra A$}) \left((\alpha g = \beta g) \Rightarrow (\alpha'\Psi g = \beta' \Psi g)\right),
\end{eqnarray}
then we have an induced map 
\[
\widetilde{\Psi} \colon \text{eq}\left(\xymatrix{A \ar@<1ex>[r]^-{\alpha} \ar@<-1ex>[r]_-{\beta} & B}\right)  \lra  \text{eq}\left(\xymatrix{A' \ar@<1ex>[r]^-{\alpha'} \ar@<-1ex>[r]_-{\beta'} & B'} \right). 
\]
Now we define $[\theta; F] \colon \underset{\mcald}{\text{holim}}\; F  \lra \underset{\mcalc}{\text{holim}}\; \theta^*F$. By Definition~\ref{holim_defn}, the homotopy limit of $\theta^* X$ is the equalizer of the maps 
\[
\xymatrix{\underset{c \in \mcalc}{\prod} (F(\theta(c)))^{N(\mcalc \downarrow c)} \ar@<1ex>[r] \ar@<-1ex>[r]  & \underset{(c \lra c') \in \mcalc}{\prod} (F(\theta(c')))^{N(\mcalc \downarrow c)}. }
\]
Define 
\[
\Psi \colon \underset{d \in \mcald}{\prod} (F(d))^{N(\mcald \downarrow d)} \lra \underset{c \in \mcalc}{\prod} (F(\theta(c)))^{N(\mcalc \downarrow c)}
\]
as follows. For $c \in \mcalc$ the map from $\underset{d \in \mcald}{\prod} (F(d))^{N(\mcald \downarrow d)}$ to the factor indexed by $c$ is defined to be the composition
\begin{eqnarray} \label{fd_fc}
\xymatrix{\underset{d \in \mcald}{\prod} (F(d))^{N(\mcald \downarrow d)} \ar[r] & (F(\theta(c)))^{N(\mcald \downarrow \theta(c))} \ar[r]  & (F(\theta(c)))^{N(\mcalc \downarrow c)}},
\end{eqnarray}
where the first map is the projection onto the factor indexed by $\theta(c)$, and the second one is induced by the canonical functor $\mcalc \downarrow c \lra \mcald \downarrow \theta(c)$. It is straightforward to see that $\Psi$ satisfies condition (\ref{eq_cond}). We thus obtain  $[\theta; F] := \psit$. 

It is also straightforward to check that the squares (\ref{nat_theta}) and (\ref{theta_square}) commute. 
\end{proof}

%One can ask the question to know conditions on $\theta \colon \mcalc \lra \mcald$ to ensure that the natural map $[\theta; F]$ is a weak equivalence. This question is well known, and its answer is given by Theorem~\ref{htpy_cofinal_thm} below. Before we state it, we need to recall the following two definitions. 

We end this subsection with the following three important properties of homotopy limits. The first (see Theorem~\ref{fib_cofib_thm}) is known as the homotopy invariance for homotopy limits. The second (see Theorem~\ref{htpy_cofinal_thm}) is the cofinality theorem. And the last (see Theorem~\ref{fubini_thm}) is  the so-called Fubini Theorem for homotopy limits. Before we state those properties, we need the recall the following three definitions. 

\begin{defn} \label{uc_defn}
Let $\theta \colon \mcalc \lra \mcald$ be a functor, and let $d \in \mcald$. The \emph{under category} $d \downarrow \theta$ is defined as follows.  An object of $d \downarrow \theta$ is a pair $(c, f)$ where $c$ is an object of $\mcalc$ and $f$ is a morphism of $\mcald$ from $d$ to $\theta(c)$ . A morphism from $(c, f)$ to $(c', f')$ consists of a morphism $g \colon c \lra c'$ of $\mcalc$ such that the obvious triangle commutes (that is, $\theta(g) f = f'$). In similar fashion, one has the \textit{over category} $\theta \downarrow d$. 
 \end{defn}

\begin{defn} \cite[Definition 19.6.1]{hir03} \label{cofinal_defn}
A functor $\theta \colon \mcalc \lra \mcald$ is \emph{homotopy right cofinal} (respectively \emph{homotopy left cofinal}) if for every $d \in \mcald$, the under category  $d \downarrow \theta$ (respectively the over category $\theta \dar d$) (see Definition~\ref{uc_defn}) is contractible. 
\end{defn}

\begin{defn} \label{owf_defn}
If $\mcalc$ is a category and $\mcalm$ is a model category, a functor $F \colon \mcalc \lra \mcalm$ is said to be an \emph{objectwise fibrant} functor if the image of every object under $F$ is fibrant. 
\end{defn}

\begin{thm} \cite[Theorems 18.5.2, 18.5.3]{hir03}  \label{fib_cofib_thm}
Let $\mcalm$ be a simplicial model category, and let $\mcalc$ be a small category. 
\begin{enumerate}
\item[(i)] If a functor $F \colon \mcalc \lra \mcalm$  is objectwise fibrant (see Definition~\ref{owf_defn}), then the homotopy limit $\underset{\mcalc}{\text{holim}} \; F$ is a fibrant object of $\mcalm$. 
\item[(ii)] Let $\eta \colon F \lra G$ be a map of $\mcalc$-diagrams in $\mcalm$. Assume that both $F$ and $G$ are objectwise fibrant. If for every object $c$ of $\mcalc$ the component $\eta[c] \colon F(c) \lra G(c)$ is a weak equivalence, then the induced map of homotopy limits $\underset{\mcalc}{\text{holim}} \; F \lra \underset{\mcalc}{\text{holim}} \; G$ is a weak equivalence of $\mcalm$.
\end{enumerate} 
\end{thm}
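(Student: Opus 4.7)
The plan is to reduce Theorem~\ref{fib_cofib_thm} to well-known facts about the Reedy model structure on cosimplicial objects in a simplicial model category. Recall that the cosimplicial replacement $\pid F$ of a functor $F \colon \mcalc \lra \mcalm$ is defined in degree $n$ by $(\pid F)^n = \prod_{c_0 \to \cdots \to c_n} F(c_n)$, with coface and codegeneracy maps induced by the simplicial structure of $N(\mcalc)$ and the functoriality of $F$, and that there is a canonical isomorphism $\underset{\mcalc}{\text{holim}} F \cong \tot(\pid F)$ (the isomorphism that underlies Proposition~\ref{comm_prop}). Via this isomorphism, it suffices to show (i) that $\tot(\pid F)$ is fibrant whenever $F$ is objectwise fibrant, and (ii) that $\tot(\pid\eta)$ is a weak equivalence whenever $\eta$ is an objectwise weak equivalence between objectwise fibrant diagrams.

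The single key technical step behind both parts is the claim that $\pid F$ is \emph{Reedy fibrant} whenever $F$ is objectwise fibrant, that is, every matching map $(\pid F)^n \lra M^n(\pid F)$ is a fibration in $\mcalm$. Granting this, part (i) follows because totalization, as a right Quillen functor from the Reedy model structure on cosimplicial objects of $\mcalm$ to $\mcalm$, sends Reedy fibrant objects to fibrant objects; concretely, this is the SM7-style enriched lifting property applied to the cosimplicial standard simplex $\Delta^{\bullet}$, which is Reedy cofibrant. For part (ii), the induced map $\pid \eta \colon \pid F \lra \pid G$ is in each degree a product of the weak equivalences $\eta[c_n] \colon F(c_n) \lra G(c_n)$ between fibrant objects, hence a Reedy weak equivalence between Reedy fibrant cosimplicial objects; applying $\tot$ and invoking Ken Brown's lemma for the right Quillen functor $\tot$ yields a weak equivalence on homotopy limits.

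The main obstacle is verifying the Reedy fibrancy of $\pid F$, which requires an explicit description of the matching objects. The approach would be to identify $(\pid F)^n \cong \prod_{c \in \mcalc} F(c)^{N(\mcalc \dar c)_n}$ by repackaging the product over chains, and to identify $M^n(\pid F)$ with the analogous product where the simplicial set of $n$-simplices is replaced by its subset of degenerate simplices $L_n \subseteq N(\mcalc \dar c)_n$. The matching map then factors as a product over $c$ of maps $F(c)^{N(\mcalc \dar c)_n} \lra F(c)^{L_n}$ induced by cofibrations $L_n \hra N(\mcalc \dar c)_n$ of simplicial sets. Each of these is a fibration by the simplicial model category axiom applied to the fibrant $F(c)$, and the product of fibrations in $\mcalm$ is a fibration. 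With Reedy fibrancy in place, both statements of the theorem follow from the one-line arguments above.
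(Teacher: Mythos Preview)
The paper does not prove this theorem at all; it simply records it with a citation to Hirschhorn \cite[Theorems 18.5.2, 18.5.3]{hir03}. So there is no ``paper's own proof'' to compare against. Your proposal supplies an actual argument, and the overall strategy is sound: reduce to $\tot(\pid F)$ via Theorem~\ref{holim_tot_thm}, establish Reedy fibrancy of $\pid F$ (which is exactly Proposition~\ref{rf_thm} in the paper), and then invoke that $\tot$ is right Quillen from the Reedy model structure to conclude both parts. This is a clean and standard route, and it meshes well with the machinery already developed in Section~\ref{holim_simpl_section}.

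One technical point deserves correction. Your repackaging $(\pid F)^n \cong \prod_{c} F(c)^{N(\mcalc\downarrow c)_n}$ is not quite right: an $n$-simplex of $N(\mcalc\downarrow c)$ is a chain $c_0 \to \cdots \to c_n$ together with a map $c_n \to c$, so the exponent set is larger than the set of $n$-chains in $\mcalc$ with last vertex $c$. The identification of the matching map you want is more direct and does not require this repackaging. As in the proof of Proposition~\ref{rf_thm}, observe that the codegeneracies of $\pid F$ are projections onto sub-products; hence the matching map $(\pid F)^n \to M^n(\pid F)$ is itself a projection $\prod_{\sigma \in N_n(\mcalc)} F(\sigma(n)) \to \prod_{\sigma \text{ degenerate}} F(\sigma(n))$. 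Such a projection is a fibration precisely because the complementary product $\prod_{\sigma \text{ nondegenerate}} F(\sigma(n))$ is fibrant, which holds since each $F(\sigma(n))$ is fibrant by hypothesis. With this fix your argument goes through; for part (ii) you also need that a product of weak equivalences between fibrant objects is a weak equivalence, which follows from Ken Brown's lemma applied to the product functor (a right adjoint).
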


\begin{thm}\cite[Theorem 19.6.7]{hir03}  \label{htpy_cofinal_thm}
 Let $\mcalm$ be a simplicial model category.  If $\theta \colon \mcalc \lra \mcald$ is homotopy right cofinal (respectively homotopy left cofinal), then for every objectwise fibrant contravariant (respectively covariant) functor $F \colon \mcald \lra \mcalm$, the natural map $[\theta; F]$ from Proposition~\ref{induced_holim_prop} is a weak equivalence. 
\end{thm}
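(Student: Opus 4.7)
The plan is to recognize the homotopy limit as a derived mapping space out of the constant diagram, so that $[\theta;F]$ becomes a restriction map along $\theta$, and then to analyze this restriction using the $(\theta_!, \theta^*)$ adjunction between simplicial diagram categories. I will describe the argument in the covariant / left cofinal case; the contravariant case is formally identical after passing to opposites.

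Using the end/equalizer formula of Definition~\ref{holim_defn}, one identifies $\underset{\mcald}{\mathrm{holim}}\,F$ with the derived mapping space $R\mathrm{Map}_{\mcalm^{\mcald}}(\ast,F)$ from the constant terminal $\mcald$-diagram to $F$, where the $\mcald$-diagram of simplicial sets $d \mapsto N(\mcald \downarrow d)$ plays the role of a projectively cofibrant replacement of the constant diagram $\ast$. Under this identification $[\theta;F]$ corresponds to the restriction map
\[
R\mathrm{Map}_{\mcalm^{\mcald}}(\ast,F) \lra R\mathrm{Map}_{\mcalm^{\mcalc}}(\ast,\theta^{*}F),
\]
which by the $(\theta_!,\theta^{*})$-adjunction is represented, at the level of simplicial $\mcald$-diagrams, by precomposition with the canonical comparison
\[
L\theta_!\bigl(N(\mcalc \downarrow -)\bigr) \lra N(\mcald \downarrow -).
\]
Since $F$ is objectwise fibrant and both sides are built from projectively cofibrant replacements, axiom SM7 reduces the theorem to proving that this natural transformation is an objectwise weak equivalence of simplicial sets.

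To verify this, fix $d \in \mcald$. On one side, $\mcald \downarrow d$ has the terminal object $(d, \mathrm{id}_d)$, so $N(\mcald \downarrow d) \simeq \ast$. On the other side, each $N(\mcalc \downarrow c)$ is likewise contractible (terminal object $(c,\mathrm{id}_c)$), so a standard homotopy-invariance argument for homotopy colimits identifies $L\theta_!\bigl(N(\mcalc \downarrow -)\bigr)(d)$ with the homotopy colimit over $\theta \downarrow d$ of the constant simplicial set $\ast$, which is $N(\theta \downarrow d)$. Consequently, the comparison map at $d$ is a weak equivalence precisely when $N(\theta \downarrow d)$ is contractible --- that is, precisely under the homotopy left cofinality hypothesis on $\theta$.

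The main obstacle I expect is bookkeeping with direction conventions: making sure that the distinction between $\theta \downarrow d$ and $d \downarrow \theta$ matches the co/contravariance of $F$, and that the choice of cofibrant-replacement diagram ($N(\mcald \downarrow -)$ versus $N(- \downarrow \mcald)$) is the one adapted to that variance. A secondary technical point is verifying that the adjunction argument above computes \emph{literally} the map $[\theta;F]$ defined by the equalizer construction in Proposition~\ref{induced_holim_prop}, and not merely a zig-zag equivalent map; this reconciliation is exactly the sort of careful framing / Reedy-bookkeeping that Hirschhorn develops in \cite[Chapter 18]{hir03}, and it is where the simplicial structure on $\mcalm$ is used in an essential way (via SM7 to promote objectwise weak equivalences of cofibrant simplicial indexing data to weak equivalences after taking $(-)^{K}$ termwise).
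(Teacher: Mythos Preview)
The paper does not prove this theorem; it is quoted directly from Hirschhorn \cite[Theorem~19.6.7]{hir03} and used as a black box. So there is no ``paper's proof'' to compare against, only Hirschhorn's.

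Your sketch is essentially the standard argument and is correct in outline: interpret $\underset{\mcald}{\mathrm{holim}}\,F$ as a cotensor of $F$ against the free $\mcald$-diagram $d \mapsto N(\mcald\downarrow d)$, pass along the $(\theta_!,\theta^*)$ adjunction, and reduce to showing that $L\theta_!\bigl(N(\mcalc\downarrow -)\bigr) \to N(\mcald\downarrow -)$ is an objectwise weak equivalence, which at $d$ is exactly the contractibility of $N(\theta\downarrow d)$. Hirschhorn's actual proof follows the same skeleton but is organized differently: rather than invoking a derived adjunction at the level of mapping spaces, he works pointwise with the explicit simplicial cotensor formula, shows that the homotopy limit over $\mcalc$ of $\theta^*F$ can be rewritten using the ``free cell'' diagram $c \mapsto N(\mcalc\downarrow c)$ pushed forward along $\theta$, and then identifies the resulting comparison map with a map of cotensors induced by $N(\theta\downarrow d)\to N(\mcald\downarrow d)$; SM7 then finishes as you indicate. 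Your version packages this as a single adjunction statement, which is cleaner but hides the step where one checks that the adjoint map really is the equalizer map $[\theta;F]$ of Proposition~\ref{induced_holim_prop}.

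The only genuine gap is the one you flag yourself: you have not verified that the map produced by the adjunction argument coincides with the specific map $[\theta;F]$ defined via the explicit equalizer in Proposition~\ref{induced_holim_prop}, rather than merely being connected to it by a zigzag. In this paper that identification matters, because $[\theta;F]$ is used later in commutative diagrams (e.g.\ Proposition~\ref{comm_prop} and the proof of Theorem~\ref{sos_thm}) where one needs the map on the nose. This is exactly what Hirschhorn's Chapter~18--19 bookkeeping supplies, so pointing to \cite[Theorem~19.6.7]{hir03} is the honest way to close that gap.
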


\begin{thm} \label{fubini_thm}
Let $\mcalm$ be simplicial model category, and let $\mcalc$ and $\mcald$ be small categories. Let $F \colon \mcalc \times \mcald \lra \mcalm$ be a bifunctor. Then there exists a natural weak equivalence
\[
\underset{\mcalc}{\text{holim}} \; \underset{\mcald}{\text{holim}}\; F \stackrel{\sim}{\lra} \underset{\mcald}{\text{holim}} \; \underset{\mcalc}{\text{holim}}\; F. 
\]
\end{thm}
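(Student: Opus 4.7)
The plan is to exhibit both iterated homotopy limits as naturally isomorphic to the single homotopy limit $\underset{\mcalc \times \mcald}{\text{holim}}\; F$, from which the Fubini equivalence follows by composing one isomorphism with the inverse of the other. The argument rests on three elementary facts about any simplicial model category $\mcalm$: the exponential law $Y^{K \times L} \cong (Y^K)^L$ for $Y \in \mcalm$ and simplicial sets $K, L$; the identification $N\bigl((\mcalc \times \mcald) \downarrow (c,d)\bigr) \cong N(\mcalc \downarrow c) \times N(\mcald \downarrow d)$; and the observation that the cotensor $(-)^K$, being right adjoint to $- \otimes K$, preserves all small limits.

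First I would unwind the explicit equalizer formula of Definition~\ref{holim_defn} for $\underset{\mcalc \times \mcald}{\text{holim}}\; F$. The domain of that equalizer is a product of terms $F(c,d)^{N((\mcalc \times \mcald) \downarrow (c,d))}$, which by the two facts above rewrites as $\bigl(F(c,d)^{N(\mcald \downarrow d)}\bigr)^{N(\mcalc \downarrow c)}$. Next I would compute the iterated holim $\underset{\mcalc}{\text{holim}} \; \underset{\mcald}{\text{holim}}\; F$. The inner equalizer presents $\underset{\mcald}{\text{holim}}\; F(c,-)$; raising its defining diagram to the power $N(\mcalc \downarrow c)$ preserves the equalizer structure, and then taking the outer product-equalizer over $\mcalc$ yields an equalizer of equalizers. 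Since limits commute with limits, this collapses to exactly the equalizer defining $\underset{\mcalc \times \mcald}{\text{holim}}\; F$, and the two pairs of parallel arrows match up under the canonical factorization $(f,g) = (f,\id) \circ (\id,g)$ in $\mcalc \times \mcald$. Hence the induced map
\[
\underset{\mcalc \times \mcald}{\text{holim}}\; F \lra \underset{\mcalc}{\text{holim}} \; \underset{\mcald}{\text{holim}}\; F
\]
is a natural isomorphism; the same reasoning with the roles of $\mcalc$ and $\mcald$ swapped yields the analogous isomorphism onto $\underset{\mcald}{\text{holim}} \; \underset{\mcalc}{\text{holim}}\; F$. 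Composing gives the desired Fubini comparison.

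Naturality in $F$ follows from Proposition~\ref{induced_holim_prop} applied to the two projection functors out of $\mcalc \times \mcald$, combined with the naturality of the exponential law. Theorem~\ref{fib_cofib_thm} is not strictly needed if one is content with an isomorphism on the nose; if one prefers to phrase the result only up to weak equivalence, one may first replace $F$ by a functorial objectwise fibrant diagram and transport. The main obstacle, and the only point requiring real care, is matching up the two parallel arrows in the competing equalizers: one must verify that a morphism $(f,g)$ of $\mcalc \times \mcald$, acting in the ``one-step'' equalizer for the product, corresponds under the exponential law to the composite of the arrow induced by $f$ in the outer holim with the arrow induced by $g$ in the inner holim. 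This reduces to naturality of the projection-induced functors $\mcalc \times \mcald \downarrow (c,d) \lra \mcalc \downarrow c$ and analogously for $\mcald$, which together produce the asserted isomorphism of nerves. Once that bookkeeping is done, the rest of the argument is purely formal.
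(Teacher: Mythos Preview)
Your approach is correct and in fact yields a stronger conclusion than stated: an honest natural isomorphism rather than merely a weak equivalence. The paper, by contrast, does not prove this theorem at all; it simply cites it as the dual of \cite[Theorem 24.9]{cha_sch01}. Your route is the classical Bousfield--Kan argument: use that over categories of a product split as a product of over categories, that nerve preserves products, and that the simplicial cotensor satisfies $Y^{K\times L}\cong (Y^K)^L$ and preserves limits; from these one reads off $\underset{\mcalc\times\mcald}{\text{holim}}\,F \cong \underset{\mcalc}{\text{holim}}\,\underset{\mcald}{\text{holim}}\,F$ directly from the equalizer formula. The only genuine labor, as you say, is the bookkeeping that the two parallel maps in the product equalizer match the composite of the inner and outer parallel maps under these identifications; this is routine once one factors $(f,g)=(f,\id)(\id,g)$. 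What your approach buys is a self-contained argument internal to the paper's Definition~\ref{holim_defn}, with no appeal to external homotopy-colimit machinery; what the citation buys is brevity and a pointer to a framework (Chach\'olski--Scherer) in which the dual statement for hocolims is already packaged.
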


\begin{proof}
This is the dual of \cite[Theorem 24.9]{cha_sch01}. 
\end{proof}

\subsection{Cosimplicial replacement of a diagram}  \label{cr_subsection}

The goal of this subsection is to prove Corollary~\ref{hir_coro} and Proposition~\ref{comm_prop}. As we said before those two results will be used in Section~\ref{sos_good_section}.

\begin{defn} \label{cr_defn}
Let $\mcalm$ be a simplicial model category, and let $\mcalc$ be a small category. For a covariant functor $F \colon \mcalc \lra \mcalm$, we define its \emph{cosimplicial replacement}, denoted $\pid F \colon \Delta \lra \mcalm$,  as 
\[
\Pi^n F := \prod_{(c_0 \ra \cdots \ra c_n) \in N_n(\mcalc)} F ({c_n}). 
\]
For $0 \leq j \leq n-1$ the codegeneracy map $s^j \colon \Pi^n F \lra \Pi^{n-1} F$ is defined as follows. The projection of $s^j$  onto the factor indexed by $c_0 \ra \cdots \ra c_{n-1}$ is defined to be the projection of $\Pi^n F$ onto the factor indexed by $c_0 \ra \cdots \ra c_j \stackrel{id}{\lra} c_j \ra \cdots \ra c_{n-1}$. Cofaces are defined in a similar way.  
\end{defn}

\begin{rmk} \label{betax_rmk}
Let $\mcalm$ be a model category, and let $\mcalc$ and $\mcald$ be small categories. Consider a covariant functor $F \colon \mcald \lra \mcalm$. Also consider a functor $\theta \colon \mcalc \lra \mcald$. Recall the notation $\theta^*(-)$ from (\ref{theta_starx}). Then there exists a canonical map 
\[
\beta^{\bullet}_F \colon \Pi^{\bullet} F \lra \Pi^{\bullet} (\theta^* F) 
\] 
defined as follows.  The map from $\Pi^n F$ to the factor of $\Pi^n (\theta^* F)$ indexed by $c_0 \ra \cdots \ra c_n$ is just the projection of $\Pi^n F$ onto the factor $F(\theta(c_n))$ indexed by $\theta(c_0) \ra \cdots \ra \theta(c_n)$. 
\end{rmk}

The following proposition is stated (without any proof) in \cite[Chapter XI, Section 5]{bous_kan72} for diagrams of simplicial sets.

\begin{prop} \label{rf_thm}
Let $\mcalm$ be a simplicial model category. Let $\mcalc$ be a small category, and let $F \colon \mcalc \lra \mcalm$ be an objectwise fibrant covariant functor. Then the cosimplicial replacement $\Pi^{\bullet} F$ is \emph{Reedy fibrant} (see the definition of \lq\lq Reedy fibrant\rq\rq{}  in the proof). 
\end{prop}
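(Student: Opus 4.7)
The plan is to unravel the definition of Reedy fibrant for a cosimplicial object and to identify the matching object of $\Pi^{\bullet}F$ at each level explicitly. Recall from \cite[Definition 15.3.3]{hir03} that a cosimplicial object $X^{\bullet}$ in $\mcalm$ is Reedy fibrant if for every $n \geq 0$ the matching map $X^n \lra M^n X$ is a fibration, where $M^n X = \lim X^k$ is the limit taken over the category of non-identity surjections $\phi \colon [n] \twoheadrightarrow [k]$ in $\Delta$, and the matching map is induced by the codegeneracies $X^{\phi} \colon X^n \lra X^k$.

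First I would identify $M^n(\Pi^{\bullet}F)$ combinatorially. Call an $n$-simplex $\sigma = (c_0 \lra \cdots \lra c_n)$ of $N(\mcalc)$ \emph{degenerate} if at least one of its arrows is an identity, and \emph{non-degenerate} otherwise. By the Eilenberg--Zilber lemma every degenerate $n$-chain $\sigma$ factors uniquely as $\sigma = \tau \circ \phi$ for a non-identity surjection $\phi \colon [n] \twoheadrightarrow [k]$ and a non-degenerate $k$-chain $\tau$. Unwinding Definition~\ref{cr_defn}, the codegeneracy $\Pi^{\phi} \colon \Pi^n F \lra \Pi^k F$ sends the factor indexed by $\tau$ to the projection of $\Pi^n F$ onto the factor indexed by $\tau \circ \phi$. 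Computing the limit over non-identity surjections and using Eilenberg--Zilber uniqueness to reduce compatibility conditions to a free choice of one element per degenerate $n$-chain, one obtains a natural isomorphism
\[
M^n(\Pi^{\bullet} F) \;\cong\; \prod_{\sigma \text{ degenerate}} F(c_n(\sigma)),
\]
under which the matching map is the canonical projection of $\Pi^n F = \prod_{\sigma} F(c_n(\sigma))$ onto the factors indexed by degenerate $n$-chains.

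Now splitting the total product $\Pi^n F$ along the partition of $n$-chains into non-degenerate and degenerate ones, the matching map fits into the pullback square
\[
\xymatrix{ \Pi^n F \ar[r] \ar[d] & \prod_{\sigma \text{ non-deg}} F(c_n(\sigma)) \ar[d] \\ M^n(\Pi^{\bullet} F) \ar[r] & \ast. }
\]
Since $F$ is objectwise fibrant, each factor $F(c_n(\sigma))$ is fibrant, hence so is the product on the upper-right; its map to the terminal object is therefore a fibration. The matching map is a pullback of this fibration, hence itself a fibration in $\mcalm$. This proves that $\Pi^{\bullet} F$ is Reedy fibrant.

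I expect the main obstacle to be the explicit identification of $M^n(\Pi^{\bullet} F)$: one must verify carefully that a cone over the matching diagram (indexed by non-identity surjections out of $[n]$) is, by Eilenberg--Zilber uniqueness, exactly the same datum as a free choice of an element of $F(c_n(\sigma))$ for each degenerate $n$-chain $\sigma$, and that the induced projection coincides with the matching map assembled from the codegeneracies $\Pi^{\phi}$. Once this combinatorial identification is settled, the fibration conclusion is a routine pullback argument that makes no further use of the simplicial structure of $\mcalm$.
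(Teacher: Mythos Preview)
Your proposal is correct and follows essentially the same approach as the paper: both identify the matching map $\Pi^n F \lra M^n(\Pi^{\bullet}F)$ as a projection of a product of fibrant objects onto a sub-product, whence it is a fibration. The paper's version is considerably terser---it simply asserts that since each codegeneracy $s^j$ is a projection, the induced map $\alpha^n$ to the matching object is also a projection---whereas you carry out the explicit identification of $M^n(\Pi^{\bullet}F)$ with the product over degenerate $n$-chains via the Eilenberg--Zilber lemma, which is exactly the content the paper leaves implicit.
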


\begin{proof}
First we recall the definition of \textit{Reedy fibrant}. Let $\zd \colon \Delta \lra \mcalm$ be a cosimplicial object in $\mcalm$. For $n \geq 0$, we let $\mcale_n$ denote the category whose objects are  maps $[n] \lra [p]$ of $\Delta$ such that $p < n$. A morphism from $[n] \lra [p]$ to $[n] \lra [q]$ consists of a map $[p] \lra [q]$ of $\Delta$ such that the obvious triangle commutes. The \textit{matching object} of $\zd$ at $[n] \in \Delta$, denoted $M^n \zd$, is defined to be the limit of the $\mcale_n$-diagram that sends $[n] \lra [p]$ to $Z^p$. That is,
\[
M^n \zd  := \underset{([n] \ra [p]) \in \mcale_n}{\text{lim}} \; Z^p.
\]
By the universal property, there exists a unique map $\alpha^n \colon Z^n \lra M^n \zd$ that makes certain triangles commutative. That map is induced by all codegeneracies $s^j \colon Z^n \lra Z^{n-1}, 0 \leq j \leq n-1$. We say that $\zd$ is \textit{Reedy fibrant} if $\alpha^n$ is a fibration for all $n \geq 0$. 

We come back to the proof of the proposition.  Let $n \geq 0$. Since each codegeneracy map $s^j \colon \Pi^n F \lra \Pi^{n-1} F$ is a projection (see Definition~\ref{cr_defn}), it follows that $\alpha^n$ is also a projection. This implies (by the assumption that $F(c)$ is fibrant for any $c \in \mcalc$) that $\alpha^n$ is a fibration, which completes the proof.  
\end{proof}

\begin{defn} \label{tot_defn}
Let $\mcalm$ be a simplicial model category. Let $Z^{\bullet} \colon \Delta \lra \mcalm$ be a cosimplicial object in $\mcalm$. The \emph{totalization} of $Z^{\bullet}$, denoted $\text{Tot} \; Z^{\bullet}$, is defined to be the equalizer of the maps
\[
\xymatrix{\underset{[n] \in \Delta}{\prod} (Z^n)^{\Delta[n]} \ar@<1ex>[r]^-{\phi'} \ar@<-1ex>[r]_-{\psi'}  & \underset{(f \colon [n] \ra [p]) \in \Delta}{\prod} (Z^p)^{\Delta[n]}. }
\]
Here the maps $\phi'$ and $\psi'$ are defined in the similar way as the maps $\phi$ and $\psi$ from Definition~\ref{holim_defn}. 
\end{defn}

\begin{thm} \cite[Theorem 12.5]{hir14} \label{holim_tot_thm}
Let $\mcalm$ be a simplicial model category. Let $\mcalc$ be a small category, and let $F \colon \mcalc \lra \mcalm$ be a covariant functor. Then there exists an isomorphism 
\begin{eqnarray} \label{phic_map}
\Phi_{\mcalc} \colon \underset{\mcalc}{\text{holim}} \; F \stackrel{\cong}{\lra} \text{Tot} \; \Pi^{\bullet} F,
\end{eqnarray}
which is natural in $F$.
\end{thm}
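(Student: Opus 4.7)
My strategy is to view both sides as ends over suitable indexing categories and identify them via a Fubini-plus-Yoneda argument. In a simplicial model category the cotensor $(-)^{(-)}$ lets one rewrite the equalizer of Definition~\ref{holim_defn} as the end
\[
\underset{\mcalc}{\text{holim}}\; F \;\cong\; \int_{c \in \mcalc} F(c)^{N(\mcalc \downarrow c)},
\]
and similarly Definition~\ref{tot_defn} presents $\text{Tot}\, \pid F$ as $\int_{[n] \in \Delta}(\Pi^n F)^{\Delta[n]}$. I would begin from the first description and, through a sequence of canonical isomorphisms, reach the second; naturality in $F$ will be manifest at every step.

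First I would use the standard coend presentation
\[
N(\mcalc \downarrow c) \;\cong\; \int^{[n] \in \Delta} N_n(\mcalc \downarrow c) \cdot \Delta[n]
\]
of any simplicial set as a colimit of standard simplices, together with the fact that the cotensor converts coends in its exponent into ends, to get
\[
F(c)^{N(\mcalc \downarrow c)} \;\cong\; \int_{[n] \in \Delta}\bigl(F(c)^{\Delta[n]}\bigr)^{N_n(\mcalc \downarrow c)}.
\]
Plugging this into the end expression for $\underset{\mcalc}{\text{holim}}\, F$ and applying a Fubini theorem for ends (analogous to Theorem~\ref{fubini_thm}) to interchange $\int_c$ and $\int_{[n]}$, I arrive at
\[
\underset{\mcalc}{\text{holim}}\; F \;\cong\; \int_{[n] \in \Delta}\; \int_{c \in \mcalc} \bigl(F(c)^{\Delta[n]}\bigr)^{N_n(\mcalc \downarrow c)}.
\]

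Next I would identify an $n$-simplex of $N(\mcalc \downarrow c)$ with a pair consisting of an $n$-chain $x_0 \to \cdots \to x_n$ in $\mcalc$ and a morphism $x_n \to c$. Splitting the exponent accordingly and pulling the product over $N_n(\mcalc)$ outside of $\int_c$, the inner end reduces one chain at a time to
\[
\int_{c \in \mcalc} \bigl(F(c)^{\Delta[n]}\bigr)^{\mbox{Hom}_{\mcalc}(x_n,\, c)} \;\cong\; F(x_n)^{\Delta[n]}
\]
by the end form of the Yoneda lemma applied to the covariant functor $c \mapsto F(c)^{\Delta[n]}$. Taking the remaining product over $N_n(\mcalc)$ recovers $(\Pi^n F)^{\Delta[n]}$, and the outer end over $\Delta$ is by definition $\text{Tot}\, \pid F$. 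The composite of these canonical isomorphisms is the desired $\Phi_{\mcalc}$.

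Naturality in $F$ is automatic, since every step is induced by a natural construction: the cotensor is functorial in its base, the Fubini isomorphism is natural in the bifunctor, and the Yoneda identification is natural in the diagram. The main technical obstacle I foresee is bookkeeping: matching the explicit equalizer formulas of Definitions~\ref{holim_defn} and \ref{tot_defn} with the conceptual end descriptions used above, and verifying that in a simplicial model category the cotensor really does interact with colimits of simplicial sets in the way invoked (this is where the simplicial structure, as opposed to a general model structure, is used). Once those formal compatibilities are in place, the chain of isomorphisms above is forced.
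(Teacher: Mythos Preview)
Your argument is correct and is the end/coend formulation of exactly the same three-step factorization the paper recalls from Hirschhorn: the paper's $\psit_{1\mcalc}$ is your passage through the coend presentation $N(\mcalc\downarrow c)\cong\int^{[n]}N_n(\mcalc\downarrow c)\cdot\Delta[n]$, its $\psit_{2\mcalc}$ is your Fubini-plus-Yoneda step identifying $N_n(\mcalc\downarrow c)$ with pairs $\bigl((x_0\to\cdots\to x_n),\,x_n\to c\bigr)$ and collapsing the end over $c$, and its $\psit_{3\mcalc}=\id$ is your final recognition of $\int_{[n]}(\Pi^nF)^{\Delta[n]}$ as $\tot\,\pid F$. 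The only substantive difference is packaging: the paper does not really prove the statement either but merely unwinds Hirschhorn's construction at the level of explicit maps between products, precisely so that those formulas are available for the proof of Proposition~\ref{comm_prop} (naturality of $\Phi_{\mcalc}$ under change of indexing category along $\theta\colon\mcalc\to\mcald$). Your abstract version is cleaner for the theorem itself, but if you go this route you should also argue Proposition~\ref{comm_prop} abstractly, by observing that each of the coend-presentation, Fubini, and Yoneda isomorphisms is natural not only in $F$ but also in the indexing category; otherwise you will have to translate back to the explicit $\Psi_{i\mcalc}$ anyway.
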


\begin{proof}
This is well detailed in \cite[Theorem 12.5]{hir14}. However, for our purposes, specifically for the proof of Proposition~\ref{comm_prop} below, we will  recall only the construction of  $\Phi_{\mcalc}$.  The map $\Phi_{\mcalc}$ is in fact  the composition of three isomorphisms (each obtained by using the observation we made at the beginning of the proof of Proposition~\ref{induced_holim_prop}): 
\[
\xymatrix{\underset{\mcalc}{\text{holim}} \; F \ar[rr]^-{\psit_{1\mcalc}}_-{\cong} &  & X \ar[rr]^-{\psit_{2\mcalc}}_-{\cong}  &  &  X' \ar[rr]^-{\psit_{3\mcalc}}_-{\cong} &  & \tot \; \pid F, }
\]
where 
\begin{enumerate}
\item[$\bullet$] $X$ is the equalizer of a diagram 
  \[
     \xymatrix{ \underset{c \in \mcalc, n \geq 0,  \Delta[n] \ra N(\mcalc \downarrow c)}{\prod} (F(c))^{\Delta[n]} \ar@<1ex>[rr]  \ar@<-1ex>[rr]  &  &  Y }
  \]
  \item[$\bullet$] $X'$ is the equalizer of a diagram 
  \[
     \xymatrix{ \underset{n \geq 0,  (c_0 \ra \cdots \ra c_n) \in N_n(\mcalc)}{\prod} (F(c_n))^{\Delta[n]} \ar@<1ex>[rr]  \ar@<-1ex>[rr]  &  &  Y' }. 
  \]
  \item[$\bullet$] By the definition of the cosimplicial replacement (see Definition~\ref{cr_defn}), and by the the definition of the totalization (see Definition~\ref{tot_defn}), one can easily see that $\tot \; \pid F$ is the equalizer of a diagram 
   \[
     \xymatrix{ \underset{n \geq 0,  (c_0 \ra \cdots \ra c_n) \in N_n(\mcalc)}{\prod} (F(c_n))^{\Delta[n]} \ar@<1ex>[rr]  \ar@<-1ex>[rr]  &  &  Y''}. 
  \]
  Since we are only interested in the definition of maps, it is not important here to know the definition of $Y$, $Y'$, and $Y''$.  
\end{enumerate}
Recalling  $\underset{\mcalc}{\text{holim}} \; F$ from Definition~\ref{holim_defn}, the map $\psit_{1\mcalc}$ is induced by the map 
\begin{eqnarray} 
\Psi_{1 \mcalc} \colon \underset{ c \in \mcalc}{\prod}  (F(c))^{N(\mcalc \downarrow c)}  \lra \underset{c \in \mcalc, n \geq 0,  \Delta[n] \ra N(\mcalc \downarrow c)}{\prod} (F(c))^{\Delta[n]} , 
\end{eqnarray}
which is defined as follows. The projection of $\Psi_{1 \mcalc}$ onto the factor indexed by $(c \in \mcalc, n \geq 0, \Delta[n] \stackrel{\sigma}{\lra} N(\mcalc \downarrow c))$ is the composition 
\[
\xymatrix{ \underset{ c \in \mcalc}{\prod}  (F(c))^{N(\mcalc \downarrow c)} \ar[r] & (F(c))^{N(\mcalc \downarrow c)} \ar[r]  &  (F(c))^{\Delta[n]},}
\]
where the first map is the projection  onto the factor indexed by $c$, and the second is the canonical map induced by $\sigma \colon \Delta[n] \lra N(\mcalc \downarrow c)$. 

Regarding the map $\psit_{2\mcalc}$, it is induced by the map 
\begin{eqnarray}
\Psi_{2 \mcalc} \colon \underset{c \in \mcalc, n \geq 0,  \Delta[n] \ra N(\mcalc \downarrow c)}{\prod} (F(c))^{\Delta[n]}  \lra \underset{n \geq 0,  (c_0 \ra \cdots \ra c_n) \in N_n(\mcalc)}{\prod} (F(c_n))^{\Delta[n]},
\end{eqnarray}
which is defined as follows. The projection of $\Psi_{2 \mcalc}$ onto the factor indexed by $(n \geq 0, c_0 \ra \cdots \ra c_n)$ is just the projection 
\[
 \underset{c \in \mcalc, n \geq 0,  \Delta[n] \ra N(\mcalc \downarrow c)}{\prod} (F(c))^{\Delta[n]} \lra (F(c_n))^{\Delta[n]}
\]
onto the factor indexed by $(c_n, c_0 \ra \cdots \ra c_n, c_n \stackrel{id}{\lra} c_n)$. 

Lastly, the map $\psit_{3 \mcalc}$ is induced by the identity map. So 
\begin{eqnarray} \label{psi3_eq}
\Psi_{3\mcalc} := id.
\end{eqnarray}  
\end{proof}

\begin{thm} \cite[Theorem 19.8.7]{hir03} \label{hir_thm}
Let $\mcalm$ be a simplicial model category, and let $Z^{\bullet} \colon \Delta \lra \mcalm$ be a cosimplicial object in $\mcalm$. If $Z^{\bullet}$ is Reedy fibrant then the  Bousfield-Kan map $\tot \; Z^{\bullet} \lra \underset{\Delta}{\text{holim}} \; Z^{\bullet}$ (see \cite[Definition 19.8.6]{hir03})  is a weak equivalence, which is natural in $Z^{\bullet}$. 
\end{thm}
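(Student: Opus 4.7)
The plan is to identify $\tot\, Z^\bullet$ and $\underset{\Delta}{\mathrm{holim}}\, Z^\bullet$ as two instances of the same mapping-object construction applied to $Z^\bullet$, each with a Reedy cofibrant cosimplicial simplicial set as ``weight''; exhibit the Bousfield--Kan map as induced by a Reedy weak equivalence between the two weights; and then conclude via a standard SM7-plus-Reedy-model-structure argument for $\mcalm$.

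First I would put the two sides in parallel form. Unfolding Definition~\ref{tot_defn}, $\tot\, Z^\bullet$ is the equalizer built from the $\Delta[n]$-powers of $Z^n$. Unfolding Definition~\ref{holim_defn} with $\mcalc = \Delta$, $\underset{\Delta}{\mathrm{holim}}\, Z^\bullet$ is the equalizer built from the $N(\Delta \downarrow [n])$-powers of $Z^n$. Write $W^n = N(\Delta \downarrow [n])$ and view both $\Delta[\bullet]$ and $W^\bullet$ as cosimplicial simplicial sets. Next I would construct the natural ``last-vertex'' cosimplicial map $\varphi^\bullet \colon W^\bullet \lra \Delta[\bullet]$, sending a $k$-simplex $(f_0 \lra \cdots \lra f_k)$ of $W^n$ to the $k$-simplex of $\Delta[n]$ that records the images of the terminal vertices under the $f_i$'s. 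I would then verify that $\varphi^\bullet$ is compatible with cofaces and codegeneracies, and identify the Bousfield--Kan map of \cite[Definition 19.8.6]{hir03} with the map induced on equalizers by $\varphi^\bullet$. The key geometric input is that $W^n$ is contractible (since $\Delta \downarrow [n]$ has $\id_{[n]}$ as terminal object) and $\Delta[n]$ is contractible, so each $\varphi^n$ is a weak equivalence of simplicial sets; and both $W^\bullet$ and $\Delta[\bullet]$ are Reedy cofibrant (their latching maps are, up to isomorphism, inclusions of boundaries into cells). Thus $\varphi^\bullet$ is a Reedy weak equivalence between Reedy cofibrant cosimplicial simplicial sets.

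To conclude I would invoke the enriched Reedy lemma: for a simplicial model category $\mcalm$ and any Reedy fibrant cosimplicial $Z^\bullet$, the assignment $K^\bullet \mapsto \underset{[n]\in\Delta}{\mathrm{eq}}(Z^n)^{K^n}$ sends Reedy weak equivalences between Reedy cofibrant cosimplicial simplicial sets to weak equivalences in $\mcalm$; this is the standard SM7 argument in the Reedy model structure, of the same flavor as Proposition~\ref{rf_thm} and Theorem~\ref{holim_tot_thm}. Applying it to $\varphi^\bullet$ gives exactly the desired weak equivalence $\tot\, Z^\bullet \lra \underset{\Delta}{\mathrm{holim}}\, Z^\bullet$, and naturality in $Z^\bullet$ is then automatic because $\varphi^\bullet$ is a fixed natural transformation independent of $Z^\bullet$.

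The main obstacle is the identification step in the middle paragraph: reconciling the a priori different descriptions of the Bousfield--Kan map, one through Hirschhorn's explicit equalizer formula and one through the natural transformation $\varphi^\bullet$ on weights. Once that bookkeeping is done, the rest of the argument is a mechanical application of the Reedy machinery that is already used elsewhere in this section.
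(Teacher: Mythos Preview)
Your proof sketch is correct and is essentially the standard argument (and indeed the one Hirschhorn gives in \cite[Theorem~19.8.7]{hir03}): identify both $\tot$ and $\underset{\Delta}{\mathrm{holim}}$ as weighted mapping objects, observe that the Bousfield--Kan map is induced by a Reedy weak equivalence $N(\Delta\!\downarrow\![\bullet])\to\Delta[\bullet]$ between Reedy cofibrant cosimplicial simplicial sets, and conclude via SM7 in the Reedy model structure.

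However, note that the paper itself does not prove this theorem at all: it is stated purely as a citation to \cite[Theorem~19.8.7]{hir03}, with no accompanying proof. So there is nothing to compare your argument against within the paper; you have supplied a proof where the authors simply quote the result.
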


We end this section with the following corollary and proposition. These results will be used in the course of the proof of Theorem~\ref{sos_thm} and Theorem~\ref{good_thm}, which will be done at the end of Subsection~\ref{sos_good_subsection}

\begin{coro} \label{hir_coro}
Let $\mcalm$ be a simplicial model category. Let $\mcalc$ be a small category, and let $F \colon \mcalc \lra \mcalm$ be an objectwise fibrant covariant functor. Then the Bousfield-Kan map $\text{Tot} \; \Pi^{\bullet} F \lra \underset{\Delta}{\text{holim}} \; \Pi^{\bullet} F$ (see \cite[Definition 19.8.6]{hir03}) is a weak equivalence, which is natural in $\pid X$.   
\end{coro}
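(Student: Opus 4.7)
The plan is to obtain the corollary as a direct combination of the two results already stated in this subsection: Proposition~\ref{rf_thm} and Theorem~\ref{hir_thm}.

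First I would apply Proposition~\ref{rf_thm} to the functor $F \colon \mcalc \lra \mcalm$. Since $F$ is objectwise fibrant by hypothesis, the proposition gives us that the cosimplicial object $\pid F \colon \Delta \lra \mcalm$ is Reedy fibrant.

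Next, I would specialize Theorem~\ref{hir_thm} to the cosimplicial object $Z^{\bullet} := \pid F$. Since we have just verified that this $Z^{\bullet}$ is Reedy fibrant, the theorem asserts that the Bousfield-Kan map
\[
\tot \; \pid F \lra \underset{\Delta}{\text{holim}} \; \pid F
\]
is a weak equivalence. The naturality of this map in $\pid F$ is inherited directly from the naturality of the Bousfield-Kan map in $Z^{\bullet}$ stated in Theorem~\ref{hir_thm}, since passage from $F$ to $\pid F$ is itself functorial.

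There is no real obstacle here; the corollary is essentially a substitution of one result into the hypothesis of another. The only minor point worth recording in the writeup is that naturality \emph{in $F$} is well-defined because a natural transformation $F \lra G$ of objectwise fibrant diagrams induces, levelwise, a map $\pid F \lra \pid G$ of Reedy fibrant cosimplicial objects, after which Theorem~\ref{hir_thm}'s naturality clause applies.
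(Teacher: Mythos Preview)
Your proposal is correct and follows exactly the same approach as the paper's proof, which simply states that the result follows directly from Proposition~\ref{rf_thm} and Theorem~\ref{hir_thm}.
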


\begin{proof}
This follows directly from Proposition~\ref{rf_thm} and Theorem~\ref{hir_thm}.  
\end{proof}

\begin{prop} \label{comm_prop}
Let $\mcalm$ be a simplicial model category, and let $\theta \colon \mcalc \lra \mcald$ be a functor between small categories. Consider a covariant functor $F \colon \mcald \lra \mcalm$. Also consider the maps $[\theta; F], \beta^{\bullet}_F$, and $\Phi_{\mcalc}$ from Proposition~\ref{induced_holim_prop}, Remark~\ref{betax_rmk}, and Theorem~\ref{holim_tot_thm} respectively. Then the following square commutes. 
\[
\xymatrix{ \underset{\mcald}{\text{holim}} \; F \ar[rr]^-{\Phi_{\mcald}}_-{\cong}  \ar[d]_-{[\theta; F]}  &  &  \text{Tot} \; \pid F \ar[d]^-{\text{Tot}\; \beta^{\bullet}_F}    \\
 \underset{\mcalc}{\text{holim}} \; \theta^* F  \ar[rr]_-{\Phi_{\mcalc}}^-{\cong}  &  &  \text{Tot} \; \pid (\theta^*F).}
\]
\end{prop}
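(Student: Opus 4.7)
The plan is to verify commutativity by decomposing both horizontal maps. By Theorem~\ref{holim_tot_thm}, we have $\Phi_{\mcald} = \widetilde{\Psi}_{3\mcald} \circ \widetilde{\Psi}_{2\mcald} \circ \widetilde{\Psi}_{1\mcald}$, and similarly $\Phi_{\mcalc} = \widetilde{\Psi}_{3\mcalc} \circ \widetilde{\Psi}_{2\mcalc} \circ \widetilde{\Psi}_{1\mcalc}$. This splits the square into three stacked sub-squares provided we can exhibit compatible vertical maps at the intermediate stages $X$ and $X'$. For each of these intermediate equalizers, the induced vertical map is obtained (via the observation about equalizers at the beginning of the proof of Proposition~\ref{induced_holim_prop}) from an explicit map on the enclosing product. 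So it is enough to check commutativity on the product level.

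First, I would identify the vertical map at each level. At the top, $[\theta;F]$ comes from $\Psi\colon \prod_{d\in\mcald}(F(d))^{N(\mcald\downarrow d)} \to \prod_{c\in\mcalc}(F(\theta(c)))^{N(\mcalc\downarrow c)}$, defined by projecting to $\theta(c)$ and restricting along the canonical functor $\mcalc\downarrow c \to \mcald\downarrow\theta(c)$ (see (\ref{fd_fc})). At the middle stage $X_{\mcald}\to X_{\mcalc}$, the vertical map is induced on the product $\prod_{c,n,\Delta[n]\to N(\mcald\downarrow\theta(c))}(F(\theta(c)))^{\Delta[n]}$ by projecting from the $\mcald$-indexed product onto the factor indexed by $\theta(c)$ and composing the test simplex with $N(\mcalc\downarrow c)\to N(\mcald\downarrow\theta(c))$. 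At the third stage, $\tot\beta^{\bullet}_F$ is defined exactly so that on each factor $(F(\theta(c_n)))^{\Delta[n]}$ it equals the projection of $\prod_{n,(d_0\to\cdots\to d_n)}(F(d_n))^{\Delta[n]}$ onto the factor indexed by $(\theta(c_0)\to\cdots\to\theta(c_n))$ (Remark~\ref{betax_rmk}).

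Next I would verify commutativity of each sub-square. The third square commutes trivially because $\Psi_{3\mcalc}=\Psi_{3\mcald}=\mathrm{id}$ by (\ref{psi3_eq}). For the second square, trace a factor indexed by $(n,c_0\to\cdots\to c_n)$: both compositions project first onto $(F(\theta(c_n)))^{\Delta[n]}$ indexed by the tuple $(\theta(c_n),\theta(c_0)\to\cdots\to\theta(c_n),\mathrm{id})$ and then restrict, which agrees on the nose. For the first (and main) sub-square, fix a factor indexed by $(c,n,\sigma\colon\Delta[n]\to N(\mcalc\downarrow c))$. Going right-then-down projects onto $(F(\theta(c)))^{N(\mcald\downarrow\theta(c))}$ and composes with $\Delta[n]\stackrel{\sigma}{\to}N(\mcalc\downarrow c)\to N(\mcald\downarrow\theta(c))$; going down-then-right first projects $\prod_{d}(F(d))^{N(\mcald\downarrow d)}$ onto the $\theta(c)$-factor and restricts along $N(\mcalc\downarrow c)\to N(\mcald\downarrow\theta(c))$, and then composes with $\sigma$. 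Functoriality of $(F(\theta(c)))^{(-)}$ shows these two routes agree.

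The main obstacle is simply the bookkeeping in that first sub-square: one must be careful that the universal map $\mcalc\downarrow c \to \mcald\downarrow\theta(c)$ appears in the same slot whether applied before or after the $\Delta[n]$-test. Once this identification is made and combined with the fact that all three inducing maps on products commute strictly, the induced maps on equalizers commute by the uniqueness clause in the universal property, yielding the desired commutative square.
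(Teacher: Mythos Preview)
Your proof is correct and follows essentially the same approach as the paper: decompose $\Phi_{\mcald}$ and $\Phi_{\mcalc}$ into the three factors $\widetilde{\Psi}_{i(-)}$, exhibit the appropriate intermediate vertical maps on the product level, and verify that each of the three resulting sub-squares commutes (the third trivially since $\Psi_{3(-)}=\mathrm{id}$). Your treatment is in fact slightly more explicit than the paper's, which simply declares the first two squares ``straightforward'' after writing them down.
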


Warning! Proposition~\ref{comm_prop} does not follow from the naturality  of the map $\Phi_{\mcalc}$ from Theorem~\ref{holim_tot_thm}. This is because $F$ and $\theta^* F$ does not have the same domain. So to prove Proposition~\ref{comm_prop} we really have to use the definition of $\Phi_{\mcalc}$. 

\begin{proof}[Proof of Proposition~\ref{comm_prop}]
Recall the maps $\Psi_{i (-)}, 1 \leq i \leq 3,$ from the proof of Theorem~\ref{holim_tot_thm}. To prove the proposition, it suffices to see  that the  three  squares induced by the pairs $(\Psi_{i \mcald}, \Psi_{i \mcalc}), 1 \leq i \leq 3,$ are all commutative. Let us begin with the following square induced by $(\Psi_{1 \mcald}, \Psi_{1 \mcalc})$. 
\begin{eqnarray} \label{sq1_eqn}
\xymatrix{ \underset{d \in \mcald}{\prod} (F(d))^{N(\mcald \dar d)} \ar[rr]^-{\Psi_{1\mcald}}  \ar[d]_-{\alpha}  &  &    \underset{d \in \mcald, n \geq 0,  \Delta[n] \ra N(\mcald \downarrow d)}{\prod} (F(d))^{\Delta[n]} \ar[d]^-{\lambda} \\
                \underset{c \in \mcalc}{\prod} (F(\theta(c)))^{N(\mcalc \dar c)}  \ar[rr]_-{\Psi_{1\mcalc}}  &   &   \underset{c \in \mcalc, n \geq 0,  \Delta[n] \ra N(\mcalc \downarrow c)}{\prod} (F(\theta(c)))^{\Delta[n]}. }
\end{eqnarray}
Here $\alpha$ is the composition from (\ref{fd_fc}), while the projection of $\lambda$ onto the factor indexed by $(c, n,  \sigma \colon \Delta[n] \ra N(\mcalc \dar c))$ is the projection  
\[
\underset{d \in \mcald, n \geq 0,  \Delta[n] \ra N(\mcald \downarrow d)}{\prod} (F(d))^{\Delta[n]}  \lra F(\theta(c))^{\Delta[n]}
\]
onto the factor indexed by $(\theta(c), n, \Delta[n] \stackrel{\sigma}{\lra} N(\mcalc \dar c) \stackrel{f}{\lra} N(\mcald \dar \theta(c)))$, where $f$ is induced by the obvious functor $\mcalc \dar c \lra \mcald \dar \theta(c)$.  Using the definitions, it is straightforward to  check that the square (\ref{sq1_eqn}) commutes.  

It is also straightforward to  see that the following square, induced by the pair $(\Psi_{2\mcald}, \Psi_{2\mcalc})$, is commutative. 
\[
\xymatrix{ \underset{d \in \mcald, n \geq 0,  \Delta[n] \ra N(\mcald \downarrow d)}{\prod} (F(d))^{\Delta[n]} \ar[d]_-{\lambda}   \ar[rr]^-{\Psi_{2\mcald}} &  &  \underset{n\geq 0, (d_0 \ra \cdots \ra d_n) \in N_n(\mcald)}{\prod}  (F(d_n))^{\Delta[n]}  \ar[d]    \\
 \underset{c \in \mcalc, n \geq 0,  \Delta[n] \ra N(\mcalc \downarrow d)}{\prod} (F(\theta(c)))^{\Delta[n]}  \ar[rr]_-{\Psi_{2 \mcalc}}  &  &   \underset{n\geq 0, (c_0 \ra \cdots \ra c_n) \in N_n(\mcalc)}{\prod} (F(\theta(c_n)))^{\Delta[n]} } 
\]
Lastly, the square induced by $(\Psi_{3\mcald}, \Psi_{3\mcalc})$ is clearly commutative since $\Psi_{3\mcald} = id$ and  $\Psi_{3\mcalc} = id$ by (\ref{psi3_eq}). 
\end{proof}

\section{Special open sets and good cofunctors}    \label{sos_good_section}

The goal of this section is to prove Theorem~\ref{sos_thm} and Theorem~\ref{good_thm} below. The first result is a key ingredient, which will be used in many places throughout Sections~\ref{sos_good_section}, \ref{poly_section}, \ref{hc_section}. It roughly says that a certain homotopy limit $\underset{V \in \bk(U)}{\text{holim}} \; F(V)$ is independent  of the choice of the basis $\mcalb$.  The second theorem is a part of the proof of the main result of this paper (Theorem~\ref{main_thm}). Note that the subsections~\ref{cof_fsp_subsection}, \ref{sos_good_subsection} are influenced by the work of Pryor \cite{pryor15}. 

From now on, we let $M$ denote a smooth manifold, and we let $\om$ to be the poset of open subsets of $M$. Also, for $k \geq 0$, we let $\okm \subseteq \om$ to be the full subposet whose objects are open subsets diffeomorphic to the disjoint union of at most $k$  balls. Below (see Example~\ref{fso_expl}) we will see that $\okm$ can be obtained in another way. In \cite{wei99} Weiss calls objects of $\okm$ \textit{special open sets}. 
%Let $\mcalm$ be a \textit{simplicial model category} (\cite[Definition 16.6.21]{hir03}).

\begin{thm} \label{sos_thm}
Let $\mcalm$ be a  simplicial model category, and let $\mcalb$ and $\mcalb'$ be good bases (see Definition~\ref{gb_defn}) for the topology of $M$ such that $\mcalb \subseteq \mcalb'$. Let $\mcalb'_k(M) \subseteq \okm$ denote the full subcategory whose objects are disjoint unions of at most $k$ elements from $\mcalb'$. Consider an isotopy cofunctor (see Definition~\ref{isotopy_cof_defn}) $F \colon \mcalb'_k(M) \lra \mcalm$. Also consider the cofunctors $\fsb, \fsbpr \colon \om \lra \mcalm$ from Definition~\ref{fsb_defn}. Then the natural map 
\[
[\theta; F] \colon \fsbpr \lra \fsb
\]
induced by the inclusion functor $\theta \colon \bk(M) \lra \mcalb'_k(M)$  is a weak equivalence. Here the notation \lq\lq[-; -] \rq\rq{} comes from Proposition~\ref{induced_holim_prop}. 
\end{thm}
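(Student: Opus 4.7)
The plan is to follow the Weiss--Pryor decomposition outlined in the introduction, which avoids any attempt to show directly that the inclusion $\theta \colon \bk(U) \hookrightarrow \mcalb'_k(U)$ is homotopy right cofinal (generally false). For each $U \in \om$ and each $p \geq 0$, I would introduce the category $\bkp(U)$ of length-$p$ chains in $\bk(U)$ (Definition~\ref{bkp_defn}) and set
\[
\fsbp(U) := \prod_{(V_0, \ldots, V_p) \in \bkp(U)} F(V_p),
\]
so that the cofaces and codegeneracies assemble the $\fsbp$ into a cosimplicial cofunctor $\fsbd \colon \om \lra \mcalm$; evaluated at $U$, this is by construction the cosimplicial replacement $\pid F$ of $F|\bk(U)$ from Definition~\ref{cr_defn}. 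The analogous construction for $\mcalb'$ produces $\tilde{F}^{!\bullet}_{\mcalb'}$, and $\theta$ induces the natural comparison $\fsbd \to \tilde{F}^{!\bullet}_{\mcalb'}$ via the map $\beta^{\bullet}_F$ of Remark~\ref{betax_rmk}.

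The proof would then proceed in two steps. First (Proposition~\ref{sosp_prop} in the outline), I would show that for each $p \geq 0$ the induced map $\fsbp(U) \to \tilde{F}^{!p}_{\mcalb'}(U)$ is a weak equivalence, natural in both $U$ and $\mcalb$. Here I would use that $\mcalb$ is a good basis, so that each chain in $\mcalb'_k(U)$ is cofinally refined by chains in $\bk(U)$, together with isotopy invariance of $F$ and the Grothendieck construction (Subsection~\ref{gro_const_subsection}) to convert the refinement into a levelwise weak equivalence. Second, I would identify $\fsb(U)$ with $\underset{[p]\in\Delta}{\text{holim}}\; \fsbd(U)$ through the zigzag of natural weak equivalences
\[
\fsb(U) \xrightarrow[\cong]{\Phi_{\bk(U)}} \tot\, \pid F \xleftarrow{\sim} \underset{[p]\in\Delta}{\text{holim}}\; \pid F,
\]
where the isomorphism is Theorem~\ref{holim_tot_thm} and the right-hand Bousfield--Kan map is a weak equivalence by Corollary~\ref{hir_coro}, valid because $\pid F$ is Reedy fibrant by Proposition~\ref{rf_thm} (itself available because the good-cofunctor axiom gives $F$ objectwise fibrant).

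Pasting the two zigzags (one for $\mcalb$, one for $\mcalb'$) along $\theta$ produces a ladder whose commutativity at the $\Phi$-square is precisely the content of Proposition~\ref{comm_prop}; commutativity at the Bousfield--Kan square follows from its naturality in the cosimplicial object. Once commutativity is secured, the first step combined with Theorem~\ref{fib_cofib_thm} (applicable because the cosimplicial diagrams are objectwise fibrant) upgrades the levelwise weak equivalences into a weak equivalence on $\underset{[p]\in\Delta}{\text{holim}}$, and the zigzag then forces $[\theta; F] \colon \fsbpr(U) \to \fsb(U)$ to be a weak equivalence. The main obstacle I anticipate is the bookkeeping required to make every map of the zigzag natural in $\mcalb$ as well as in $U$; this is precisely why Proposition~\ref{comm_prop} had to be singled out in Section~\ref{holim_simpl_section}, since it is not a formal consequence of the naturality of $\Phi_{\mcalc}$ in $F$ alone.
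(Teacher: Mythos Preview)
Your proposal has a genuine gap: you have misidentified the splitting $\fsbp$. You set $\fsbp(U) = \prod_{(V_0,\ldots,V_p)\in\bkp(U)} F(V_p)$ and declare this to be the cosimplicial replacement $\Pi^p(F|\bk(U))$. With that choice your step~2 is indeed trivial (it is literally Theorem~\ref{holim_tot_thm} plus Corollary~\ref{hir_coro}), but your step~1 is false: the comparison map $\beta^p_F \colon \Pi^p(F|\mcalb'_k(U)) \to \Pi^p(F|\bk(U))$ between cosimplicial replacements is a projection between products indexed by genuinely different sets of simplices, and there is no reason for it to be a weak equivalence. No amount of cofinal refinement or Grothendieck-construction bookkeeping will fix this, because the functor $(V_0\to\cdots\to V_p)\mapsto F(V_p)$ on the nerve of $\bk(U)$ is \emph{not} locally constant.

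The paper's $\fsbp(U)$ is something different: it is the homotopy limit of $\ftpb$ over the \emph{category} $\bkp(U)$ of Definition~\ref{bkp_defn}, whose morphisms are levelwise isotopy equivalences (not all inclusions). On that category $\ftpb$ \emph{is} locally constant, so Proposition~\ref{sosp_prop} follows from Cisinski's Lemma~\ref{cisinski_lem} together with Pryor's nerve equivalence (Lemma~\ref{pryor1_lem}); this is what makes step~1 go through. The price is that step~2 is now far from trivial: connecting $\underset{\Delta}{\text{holim}}\,\fsbp(U)$ back to $\fsb(U)$ requires introducing the ``transposed'' categories $\bkq(U)$ and functors $\fhb^q$, the identity $\Pi^q\ftb^p = \Pi^p\fhb^q$ of Remark~\ref{fsp_bkp_rmk}, the Fubini Theorem~\ref{fubini_thm} to swap the two $\Delta$-homotopy limits, and finally Lemma~\ref{fqf_lem}. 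Your sketch omits this entire mechanism.
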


\begin{thm} \label{good_thm}
Let $\mcalm$ be a  simplicial model category, and let $\mcalb$ be a good basis (see Definition~\ref{gb_defn}) for the topology of $M$. Consider the poset $\bkm$ from Definition~\ref{fsb_defn}, and let $F \colon \bkm \lra \mcalm$ be an isotopy cofunctor (see Definition~\ref{isotopy_cof_defn}). Then the cofunctor $\fsb \colon \om \lra \mcalm$  from  Definition~\ref{fsb_defn} is good (see Definition~\ref{good_defn}). 
\end{thm}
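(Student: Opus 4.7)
The goodness axioms for a cofunctor $G \colon \om \lra \mcalm$ in this paper's sense are: objectwise fibrancy, sending inclusions which are isotopy equivalences to weak equivalences, and the descent property $G(U) \stackrel{\sim}{\lra} \underset{i}{\text{holim}}\; G(U_i)$ for every increasing sequence $U_0 \subseteq U_1 \subseteq \cdots$ with $U = \bigcup_i U_i$. Objectwise fibrancy of $\fsb$ is immediate from Theorem~\ref{fib_cofib_thm}(i): since $F$ is an isotopy cofunctor (which, by convention, includes the objectwise fibrancy axiom), the restriction $F|\bk(U)$ is an objectwise fibrant diagram for every $U$, and so the homotopy limit $\fsb(U)$ is fibrant.

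For the descent property, I plan to show that the inclusion $\iota \colon \bigcup_i \bk(U_i) \hra \bk(U)$ is homotopy right cofinal. Under the standard hypothesis that the exhaustion satisfies $\overline{U_i} \subseteq U_{i+1}$, every $V \in \bk(U)$ lies in some $\bk(U_i)$, so $V$ itself is an initial object of the under-category $V \downarrow \iota$, making it contractible. Theorem~\ref{htpy_cofinal_thm} then yields $\fsb(U) \stackrel{\sim}{\lra} \underset{V \in \bigcup_i \bk(U_i)}{\text{holim}}\; F(V)$. Rewriting $\bigcup_i \bk(U_i)$ as the Grothendieck construction of the tower $\{\bk(U_i)\}_i$ (using Subsection~\ref{gro_const_subsection}) and applying the Fubini theorem (Theorem~\ref{fubini_thm}) identifies this with $\underset{i}{\text{holim}}\; \fsb(U_i)$, as required.

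The main obstacle is isotopy invariance. Given an inclusion $\phi \colon U \hra U'$ which is an isotopy equivalence, realized by an ambient isotopy $H$ with $H_1(U') \subseteq U$, the map $H_1$ does not in general preserve $\mcalb$: an element of $\bk(U')$ is sent to a disjoint union of open balls but typically \emph{not} to an element of $\bk(U)$. Theorem~\ref{sos_thm} is precisely the tool that lets me bypass this: enlarge $\mcalb$ to a good basis $\widetilde{\mcalb}$ compatible with the isotopy (for instance by adjoining the images $H_t(B)$ for $B \in \mcalb$ and a suitable countable set of times $t$), extend $F$ to $\widetilde{\mcalb}_k(M)$ as an isotopy cofunctor by right Kan extension, and apply Theorem~\ref{sos_thm} to identify $\fsb$ with the analogous construction $F^{!}_{\widetilde{\mcalb}}$ up to natural weak equivalence. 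On the enlarged basis, the isotopy induces a genuine functor $\widetilde{\mcalb}_k(U') \lra \widetilde{\mcalb}_k(U)$, and a Grothendieck construction (Subsection~\ref{gro_const_subsection}) indexing pairs $(V, t)$ along the trajectory yields a zigzag of natural weak equivalences connecting $\fsb(U')$ and $\fsb(U)$. The essential role of Theorem~\ref{sos_thm} here is to provide the freedom to change the basis without altering $\fsb$ up to weak equivalence, which is what makes the isotopy-compatible basis approach viable.
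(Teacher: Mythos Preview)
Your proposal has two substantial gaps.

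\textbf{Descent.} You assume the exhaustion satisfies $\overline{U_i} \subseteq U_{i+1}$, but the goodness axiom requires the descent property for \emph{every} increasing union, with no such hypothesis. More seriously, even under that hypothesis your cofinality claim is false: an element $V \in \bk(U)$ need not lie in any $\bk(U_i)$, because $V$ need not have compact closure in $U$ (take $M = U = \mathbb{R}$, $U_i = (-i,i)$, and $V = \mathbb{R}$ itself, which is a single ball in $\mcalb$ if $\mcalb = \mcalo$). The paper handles this correctly by first replacing $\bk(-)$ with the subposet $\bkb(-)$ of those $V$ whose closure lies in the ambient open set (Lemma~\ref{gd_lem}, itself an application of Theorem~\ref{sos_thm} to the shrunken basis $\overline{\mcalb}$). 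On $\bkb$ the Grothendieck identification $\int_i \bkb(U_i) = \bkb(\cup_i U_i)$ genuinely holds, and Theorem~\ref{cha_sch_thm} finishes the argument.

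\textbf{Isotopy invariance.} Your plan to enlarge $\mcalb$ to an isotopy-stable $\widetilde{\mcalb}$ and invoke Theorem~\ref{sos_thm} is circular. Theorem~\ref{sos_thm} requires the cofunctor to be an isotopy cofunctor on the \emph{larger} basis $\widetilde{\mcalb}_k(M)$, but you only have $F$ on $\bkm$. Your proposed extension by right Kan extension is exactly $\fsb | \widetilde{\mcalb}_k(M)$, and verifying that this is an isotopy cofunctor on $\widetilde{\mcalb}_k(M)$ is a special case of the very statement you are proving. The paper avoids this by a different route: it does not change the basis at all for this step. Instead it uses Proposition~\ref{isop_prop} (which rests on Lemma~\ref{pryor2_lem}: the nerve of $\bkp(U) \hra \bkp(U')$ is a homotopy equivalence for an isotopy equivalence $U \hra U'$) to show each $\fsbp$ is an isotopy cofunctor, and then runs the cosimplicial zigzag from the proof of Theorem~\ref{sos_thm} with $\okp(U)$ replaced by $\bkp(U')$ to transport this to $\fsb$.
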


The proof of Theorem~\ref{sos_thm} and Theorem~\ref{good_thm} will be done in  Subsection~\ref{sos_good_subsection} after some preliminaries results.

\subsection{Isotopy cofunctors}

The goal of this subsection is to prove Proposition~\ref{fsrik_prop}, which will be used in Sections~\ref{poly_section}, \ref{hc_section}. Note that this result is well known in the context of topological spaces.

%Let $M$ be a smooth manifold, and let $\om$ be the poset of open subsets of $M$. Of course morphisms of $\om$ are inclusions.   

%m $f \colon U \hra U'$ of $\om$ is said to be an \emph{isotopy equivalence} if there exists a smooth embedding $g \colon U' \lra U$ such that $gf$ and $fg$ are both smoothly isotopic to $id_U$ and $id_{U'}$ respectively. 
%\[
%L \colon U \times [0, 1] \lra U', \quad (x, t) \mapsto L_t(x) := L(x, t)
%\]
%that satisfies the following three conditions:
%\begin{enumerate}\begin{defn}
%A morphis
%\item[(a)] $L_0 \colon U \hra U'$ is the inclusion map;
%\item[(b)] $L_1(U) = U'$;
%\item[(c)] for all $t$, $L_t \colon U \lra U'$ is a smooth embedding. 
%\end{enumerate}
%Such a map $L$ is called an \emph{isotopy from $U$ to $U'$}. 
%\end{defn}

We begin with several definitions. The first one is the notion of isotopy equivalence, which  is well known in differential topology, manifold calculus, and other areas. Nevertheless we  need to recall it  for our purposes in Section~\ref{iso_cof_section}.   

\begin{defn}  \label{iso_eq_defn}
A morphism $U \hra U'$ of $\om$ is said to be an \emph{isotopy equivalence} if there exists a continuous map
\[
L \colon U \times [0, 1] \lra U', \quad (x, t) \mapsto L_t(x) := L(x, t)
\]
that satisfies the following three conditions:
\begin{enumerate}
\item[(a)] $L_0 \colon U \hra U'$ is the inclusion map;
\item[(b)] $L_1(U) = U'$;
\item[(c)] for all $t$, $L_t \colon U \lra U'$ is a smooth embedding. 
\end{enumerate}
Such a map $L$ is called an \emph{isotopy from $U$ to $U'$}. 
\end{defn}

\begin{defn} \label{isotopy_cof_defn}
Let $\mcalc \subseteq \om$ be a subcategory of $\om$, and let $\mcalm$ be a  model category.  A  cofunctor $F \colon \mcalc \lra \mcalm$ is called \emph{isotopy cofunctor} if it satisfies the following two conditions:
\begin{enumerate}
\item[(a)] $F$ is objectwise fibrant (see Definition~\ref{owf_defn});
\item[(b)] $F$ sends isotopy equivalences (see Definition~\ref{iso_eq_defn}) to weak equivalences. 
\end{enumerate}
  
\end{defn}

\begin{defn}   \label{good_defn}
 Let $\mcalm$ be a simplicial model category. 
A cofunctor $F \colon \om \lra \mcalm$ is called \emph{good} if it satisfies the following two conditions:
\begin{enumerate}
\item[(a)] $F$ is an isotopy cofunctor (see Definition~\ref{isotopy_cof_defn});
\item[(b)] For any string $U_0 \ra U_1 \ra \cdots$ of inclusions of $\om$, the natural map 
\[
F\left(\bigcup_{i=0}^{\infty} U_i\right) \lra \underset{i}{\text{holim}} \; F(U_i)
\]
is a weak equivalence. 
\end{enumerate} 
\end{defn}

In order words, a cofunctor $F \colon \om \lra \mcalm$ is \textit{good} if it satisfies three conditions: (a), (b) from Definition~\ref{isotopy_cof_defn}, and (b) from Definition~\ref{good_defn}. As we said in the introduction, this definition is slightly different from the classical one \cite[Page 71]{wei99} (see the comment we made right after Theorem~\ref{main_thm}).

\begin{defn}  \label{gb_defn}
A basis for the topology of $M$ is called \emph{good} if each element in there is diffeomorphic to an open ball. 
\end{defn}

\begin{defn}  \label{fsb_defn} 
Let $\mcalb$ be a good basis (see Definition~\ref{gb_defn}) for the topology of $M$.  
\begin{enumerate}
\item[(i)] For $k \geq 0$, we define $\bkm \subseteq \om$ to be the full subposet whose objects are disjoint unions of at most $k$ elements from $\mcalb$. 
\item[(ii)] If $F \colon \bkm \lra \mcalm$ is a cofunctor, we  define $F^{!}_{\mcalb} \colon \om \lra \mcalm$ as 
\[
\fsb (U) := \underset{V \in \bk(U)}{\text{holim}} \; F(V).
\]
\end{enumerate}
\end{defn}

\begin{expl}  \label{fso_expl}
Let $\mcalo$ be the collection of all subsets of $M$ diffeomorphic to an open ball. Certainly this is a good basis (see Definition~\ref{gb_defn}) for the topology of $M$.  So, by Definition~\ref{fsb_defn}, one has the poset $\okm$, which is exactly the same as the poset $\okm$ we defined just before Theorem~\ref{sos_thm}. If $F \colon \okm \lra \mcalm$ is a cofunctor, one also has the cofunctor $\fso \colon \om \lra \mcalm$ defined as
\[
\fso (U) := \underset{V \in \ok(U)}{\text{holim}} \; F(V). 
\]
Clearly $\mcalo$ is the biggest (with respect to the inclusion) good basis for the topology of $M$. 
\end{expl}

As we said before, the following proposition will be used in Section~\ref{poly_section} and Section~\ref{hc_section}.  

\begin{prop} \label{fsrik_prop} 
Let $\mcalb$ and $\bkm$ as in Definition~\ref{fsb_defn}. Let $\mcalm$ be a simplicial model category, and 
let $F \colon \bkm \lra \mcalm$ be an objectwise fibrant cofunctor. 
\begin{enumerate}
\item[(i)] Then there is a natural transformation $\eta$ from $F$ to  the restriction $\fsb | \bkm$, which is an objectwise weak equivalence.  
\item[(ii)] If in addition $F$ is  an isotopy cofunctor (see Definition~\ref{isotopy_cof_defn}), then so is the restriction of $\fsb$ to $\bkm$. 
\end{enumerate} 
\end{prop}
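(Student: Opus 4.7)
The plan hinges on the observation that for every $U \in \bkm$ the element $U$ is the maximal object of the poset $\bk(U)$: any $V \in \bk(U)$ is contained in $U$ and so admits a unique inclusion $V \hra U$. Hence $U$ is terminal in $\bk(U)$, and the inclusion $\iota_U \colon \{U\} \hra \bk(U)$ picks out this terminal object.

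For part (i), I would define $\eta[U]$ as the composite of the identification $F(U) \cong \underset{V \in \bk(U)}{\lim}\, F(V)$ (valid because $U$ is terminal in $\bk(U)$, so the categorical limit of the cofunctor $F|_{\bk(U)}$ is $F(U)$) with the canonical map from the strict limit to the homotopy limit:
\[
\eta[U] \colon F(U) \; \cong \; \underset{V \in \bk(U)}{\lim}\, F(V) \lra \underset{V \in \bk(U)}{\text{holim}}\, F(V) \; = \; \fsb(U).
\]
Concretely, $\eta[U]$ sends $x \in F(U)$ to the tuple whose $V$-factor is the constant map on $N(V \dar \bk(U))$ with value $F(V \hra U)(x) \in F(V)$. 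Naturality in $U$ for an inclusion $U \hra U'$ in $\bkm$ is a direct check: both paths around the square produce, at each $V \in \bk(U)$, the constant map with value $F(V \hra U')(x')$ for $x' \in F(U')$, by functoriality of $F$.

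To show that $\eta[U]$ is a weak equivalence I would invoke Theorem~\ref{htpy_cofinal_thm}. Since $F$ is contravariant, I need $\iota_U$ to be homotopy right cofinal. For each $V \in \bk(U)$, the under-category $V \dar \iota_U$ has a unique object $(U, V \hra U)$ and no non-identity morphisms, hence is contractible. Theorem~\ref{htpy_cofinal_thm} then yields a weak equivalence $[\iota_U;\, F|_{\bk(U)}] \colon \fsb(U) \xrightarrow{\sim} F(U)$. Unwinding the construction from Proposition~\ref{induced_holim_prop}, this map is simply the projection of $\fsb(U)$ onto its factor indexed by $U$ (note that $N(U \dar \bk(U))$ is a point because $U$ is terminal). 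The composition
\[
F(U) \xrightarrow{\eta[U]} \fsb(U) \xrightarrow{[\iota_U;\, F|_{\bk(U)}]} F(U)
\]
is therefore the identity of $F(U)$, so $\eta[U]$ is a weak equivalence by two-out-of-three.

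For part (ii), each $\fsb(U)$ is fibrant by Theorem~\ref{fib_cofib_thm}(i) applied to the objectwise fibrant diagram $F|_{\bk(U)}$. If $U \hra U'$ is an isotopy equivalence in $\bkm$, then $F(U') \to F(U)$ is a weak equivalence, and the naturality square for $\eta$ --- whose horizontal maps are weak equivalences by part (i) --- together with two-out-of-three forces $\fsb(U') \to \fsb(U)$ to be a weak equivalence; hence $\fsb|_{\bkm}$ is an isotopy cofunctor. The most delicate point should be the bookkeeping needed to ensure that $\eta[U]$ actually lands inside the equalizer of Definition~\ref{holim_defn} and that the composite above really collapses to the identity, both of which amount to chasing the explicit formulas from Definition~\ref{holim_defn} and Proposition~\ref{induced_holim_prop}.
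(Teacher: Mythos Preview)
Your proposal is correct and follows essentially the same route as the paper: both arguments exploit that $U$ is terminal in $\bk(U)$, invoke the cofinality theorem (Theorem~\ref{htpy_cofinal_thm}) to produce a weak equivalence between $F(U)$ and $\fsb(U)$, and conclude by two-out-of-three; part (ii) is handled identically via Theorem~\ref{fib_cofib_thm} and the naturality square for $\eta$. The only cosmetic difference is that the paper packages the comparison through the constant functor $\theta'\colon \bk(U)\to\bk(U)$, $\theta'(V)=U$, and the naturality square~(\ref{nat_theta}), whereas you work directly with the inclusion $\iota_U\colon\{U\}\hra\bk(U)$.
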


\begin{proof}
\begin{enumerate}
\item[(i)] Let $U \in \bkm$, and let $\theta, \theta' \colon \bk(U) \lra \bk(U)$ be functors defined as  $\theta(V) = V$ and $\theta'(V) = U$. Certainly there is a natural transformation $\beta \colon \theta \lra \theta'$. This induces by (\ref{nat_theta}) the following commutative square. 
\[
\xymatrix{\underset{V \in \bk(U)}{\text{holim}}\; F(V)  \ar[rr]^-{[\theta; F]} &  &  \underset{V \in \bk(U)}{\text{holim}}\; F(V)  \\
\underset{V \in \bk(U)}{\text{holim}}\; F(V)  \ar[rr]_-{[\theta'; F]} \ar[u]^-{id} &  & \underset{V \in \bk(U)}{\text{holim}}\; F(U). \ar[u]_-{\text{holim}(F\beta)} }
\]
Clearly one has $F(U) \simeq \underset{V \in \bk(U)}{\text{holim}}\; F(U)$. This allows us to define $\eta[U] := \text{holim} (F \beta)$.   Since $\theta$ is the identity functor, it follows that the map $[\theta; F]$ is a weak equivalence (in fact it is the identity functor as well). The map $[\theta'; F]$ is also a weak equivalence (by Theorem~\ref{htpy_cofinal_thm}) since $\theta'$ is homotopy right cofinal. Indeed,  for every $V \in \bk(U)$ the under category (see Definition~\ref{uc_defn}) $V \downarrow \theta'$ has a terminal object, namely $(U, V \hra U)$.  Now, applying the two-out-of-three axiom we deduce that the map $\text{holim} (F\beta)$ is a weak equivalence. Regarding the naturality of $\eta[U]$ in $U$, it follows easily from  (\ref{theta_square}).
\item[(ii)] Certainly the functor $\fsb|\bk(M)$ satisfies condition (a) from Definition~\ref{isotopy_cof_defn} because of Theorem~\ref{fib_cofib_thm}.  Condition (b) from the same definition is also satisfied by $\fsb|\bk(M)$ (this follows directly from part (i)). 
\end{enumerate}
\end{proof}

\subsection{The cofunctors $F^{!p}$}  \label{cof_fsp_subsection}

In this subsection we consider a good basis $\mcalb$ and the poset $\bkm$ as in Definition~\ref{fsb_defn}. Also we consider the basis $\mcalo$ from Example~\ref{fso_expl}. The main results here are Proposition~\ref{sosp_prop} and Proposition~\ref{isop_prop} whose proofs are inspired by Pryor's work \cite{pryor15}. Those propositions are one of the key ingredients  in proving Theorem~\ref{sos_thm} and Theorem~\ref{good_thm}.

\begin{defn} \label{bkp_defn}
Let $k, p\geq 0$. 
\begin{enumerate}
\item[(i)] Define $\bkp(M)$ to be the poset whose objects are strings $V_0 \ra \cdots \ra V_p$ of $p$ composable morphisms in $\bkm$. A morphism from  $V_0 \ra \cdots \ra V_p$ to $W_0 \ra \cdots \ra W_p$ consists of a collection $\{f_i \colon V_i \hra W_i\}_{i=0}^p$ of isotopy equivalences such that all the obvious squares commute. 
\item[(ii)] Taking $\mcalb$ to be $\mcalo$, we have the poset $\okp(M)$. 
\end{enumerate}
\end{defn}

The following remark claims that the collection $\bkd(M) = \{\bkp(M)\}_{p \geq 0}$ is equipped with a canonical simplicial object  structure. 

\begin{rmk} \label{bkp_rmk}
Let $U \in \om$. For $0 \leq i \leq p+1$ define $d_i \colon \widetilde{B}_{k, p+1}(U) \lra \bkp(U)$ as 
\[
d_i(V_0 \ra \cdots \ra V_{p+1}) = \left\{ \begin{array}{ccc}
                                                               V_1 \ra \cdots \ra V_{p+1} & \text{if} & i =0  \\
                                                               V_0 \ra \cdots V_{i-1} \ra V_{i+1} \ra \cdots \ra V_{p+1}  & \text{if}  & 1 \leq i \leq p  \\
                                                               V_0 \ra \cdots \ra V_p  & \text{if}  & i = p+1.
                                                             \end{array} \right.
\]
For $0 \leq j \leq p$ define $s_j \colon \bkp(U) \lra \widetilde{B}_{k, p+1}(U)$ as 
\[
s_j(V_0 \ra \cdots \ra V_p)   =  V_0 \ra \cdots \ra V_j \stackrel{\text{id}}{\ra} V_j \ra \cdots \ra V_p.
\]
One can easily check that $d_i$ and $s_j$ satisfy the simplicial relation. So $\bkd(U)$ is a simplicial object in Cat, the category of small categories. 
\end{rmk}

\begin{defn} \label{fsbp_defn}
Let $F \colon \bkm \lra \mcalm$ be a cofunctor. 
\begin{enumerate}
\item[(i)] Define a cofunctor $\fsbp \colon \om \lra \mcalm$ as 
\[
\fsbp (U) := \underset{\bkp(U)}{\text{holim}} \; \ftpb,
\]
where 
\[\ftpb \colon \bkp(U) \lra \mcalm, \quad V_0 \ra \cdots \ra V_p \mapsto F(V_0).
\]
\item[(ii)] Taking again $\mcalb$ to be $\mcalo$, we have the cofunctor $\fsop \colon \om \lra \mcalm$. 
\end{enumerate} 
\end{defn}

The following remark will be used in Subsection~\ref{sos_good_subsection}. 

\begin{rmk} \label{fsp_rmk}
Let $U \in \om$, and let $\bkd(U)$ be the simplicial object from Remark~\ref{bkp_rmk}. Using the simplicial structure on  $\bkd(U)$, one can endow the collection $\fsbd (U) = \{\fsbp(U) \}_{p \geq 0}$ with a canonical cosimplicial structure as follows. First recall the notation $\theta^{*}(-)$ from (\ref{theta_starx}). Also recall the notation $[-;-]$ introduced in Proposition~\ref{induced_holim_prop}. Let $d_i$ and $s_j$ as in Remark~\ref{bkp_rmk}, and consider $d_0 \colon \widetilde{\mcalb}_{k, p+1}(U) \lra \bkp(U)$. Also consider the natural transformation $\beta \colon d_0^* \ftpb \lra \tilde{F}^{p+1}_{\mcalb}$ defined as 
\[
\beta[V_0 \stackrel{f}{\ra} V_1 \ra \cdots \ra V_{p+1}] := F(V_1)  \stackrel{F(f)}{\lra} F(V_0).
\]
Now define $d^i \colon \fsbp (U) \lra \tilde{F}^{!(p+1)}_{\mcalb} (U)$ as 
\[
d^i = \left\{ \begin{array}{ccc}
                   \text{holim}(\beta) \circ [d_0; \ftpb]  & \text{if}  & i =0  \\
                   \left[d_i; \ftpb\right]  & \text{if}  & 1 \leq i \leq p+1. 
                  \end{array} \right.
\]
Also define $s^j \colon \tilde{F}^{!(p+1)}_{\mcalb} (U) \lra \fsbp(U), 0 \leq j \leq p+1$, as 
\[
s^j = [s_j; \tilde{F}^{p+1}_{\mcalb}]. 
\]
Certainly the maps $d^i$ and $s^j$ satisfy the cosimplicial relations. So $\fsbd(U)$ is a cosimplicial object in $\mcalm$ for any $U \in \om$. 
\end{rmk}

\begin{defn} \label{loc_defn}
Let $\mcalm$ be a category with a class of weak equivalences, and let $\mcalc$ be any other category. A functor $\mcalc \lra \mcalm$ is called \emph{locally constant} if it sends every morphism of $\mcalc$ to a weak equivalence. 
\end{defn}

\begin{expl} \label{loc_expl}
Let $F \colon \bkm \lra \mcalm$ be an isotopy  cofunctor (see Definition~\ref{isotopy_cof_defn}). Then the cofunctor $\ftpb$ from Definition~\ref{fsbp_defn} is locally constant. This follows directly from the definition of a morphism of $\bkp(M)$ (see Definition~\ref{bkp_defn}) and the definition of an isotopy cofunctor. 
\end{expl}
 
 Now we state and prove the main results of this subsection (Proposition~\ref{sosp_prop} and Proposition~\ref{isop_prop}). First we need three preparatory lemmas.

\begin{lem}  \label{cisinski_lem}
Let $\mcalm$ be a simplicial model category. Let $\theta \colon \mcalc \lra \mcald$ be a functor between small categories. Consider a functor $G \colon \mcald \lra \mcalm$, and assume that it is locally constant (see Definition~\ref{loc_defn}). Also assume that the nerve of $\theta$ is a weak equivalence. Then the canonical map 
\[
[\theta; G] \colon \underset{\mcald}{\text{holim}} \; G \lra \underset{\mcalc}{\text{holim}} \; \theta^{*} G
\]
(see Proposition~\ref{induced_holim_prop}) is a weak equivalence. 
\end{lem}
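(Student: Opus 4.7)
My plan is to reduce the claim to the property of simplicial model categories that cotensoring a fibrant object with a weak equivalence of simplicial sets gives a weak equivalence, via the intermediate fact that the homotopy limit of a locally constant, objectwise fibrant diagram depends (up to weak equivalence) only on the nerve of its indexing category. First, by Theorem~\ref{fib_cofib_thm}(ii) applied to a functorial objectwise fibrant replacement of $G$ (still locally constant, since weak equivalences are stable), together with the naturality of $[\theta; -]$ in $G$ from Proposition~\ref{induced_holim_prop}, I may assume $G$ is objectwise fibrant. Next I decompose $\mcalc$ and $\mcald$ into connected components: since nerves preserve disjoint unions and $N(\theta)$ is a weak equivalence, $\theta$ induces a bijection of components and each restriction $\theta_\alpha \colon \mcalc_\alpha \lra \mcald_\alpha$ has nerve a weak equivalence; because homotopy limits of objectwise fibrant diagrams distribute over the product over components, the claim reduces to the case in which both $\mcalc$ and $\mcald$ are connected. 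Fix then $c_0 \in \mcalc$ and set $d_0 := \theta(c_0)$.

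The heart of the argument is to produce a natural weak equivalence
\[
\underset{\mcald}{\text{holim}}\; G \;\simeq\; G(d_0)^{N(\mcald)}
\]
which, via Proposition~\ref{comm_prop}, intertwines $[\theta; G]$ with the cotensor map $G(d_0)^{N(\mcald)} \lra G(d_0)^{N(\mcalc)}$ induced by $N(\theta)$. Given this identification, the lemma follows at once: $N(\theta)$ is a weak equivalence of simplicial sets and $G(d_0)$ is fibrant, so the cotensor map is a weak equivalence. The identification itself I would derive from Theorem~\ref{holim_tot_thm}: at cosimplicial level $n$, the replacement $\Pi^n G = \prod_{d_0 \ra \cdots \ra d_n} G(d_n)$ is indexed by $N_n(\mcald)$, with each factor $G(d_n)$ weakly equivalent to $G(d_0)$ by local constancy and connectedness of $\mcald$; combined with the Reedy fibrancy supplied by Proposition~\ref{rf_thm} and the totalization-to-$\text{holim}$ passage of Corollary~\ref{hir_coro}, this should yield a zigzag of weak equivalences between $\pid G$ and the cosimplicial replacement of the constant diagram at $G(d_0)$, whose totalizations supply the required comparison.

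The main obstacle is making the levelwise equivalence of cosimplicial replacements truly canonical, rather than dependent on arbitrary choices of zigzags in $\mcald$. This is essentially the content of Cisinski's lemma and encodes a coherent parallel transport of $G$-values along morphisms of $\mcald$. A clean rigorous route proceeds by factoring $G$ through the localization of $\mcald$ that inverts all morphisms (which models the fundamental groupoid of $N(\mcald)$) and treating $G$ as a coherent representation thereof; once this is in place, the cotensor-with-weak-equivalence argument above concludes the proof.
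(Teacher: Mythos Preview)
The paper's own proof is a one-line citation: this is the dual of Proposition~1.17 in Cisinski's paper \cite{cis09}. So there is nothing to compare at the level of argument; the question is whether your independent sketch actually establishes the lemma.

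It does not, and the gap you flag in your last paragraph is not a technicality but a genuine obstruction. Your key intermediate claim, that for a connected $\mcald$ and a locally constant objectwise fibrant $G$ one has $\underset{\mcald}{\text{holim}}\, G \simeq G(d_0)^{N(\mcald)}$, is false in general. Take $\mcald = B(\mathbb{Z}/2)$ and let $G$ send the unique object to the discrete fibrant set $S^0=\{+,-\}$ with the generator acting by the swap; this is locally constant since the swap is an isomorphism. Then $\underset{\mcald}{\text{holim}}\, G$ is the homotopy fixed point set $(S^0)^{h\mathbb{Z}/2}$, which is empty because the action is free, whereas $(S^0)^{N(B\mathbb{Z}/2)} = \text{Map}(B\mathbb{Z}/2, S^0) \simeq S^0$ since $B\mathbb{Z}/2$ is connected. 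The monodromy of $G$ is precisely what distinguishes the two, and no choice of zigzags will produce a \emph{cosimplicial} map between $\pid G$ and the cosimplicial replacement of the constant diagram at $G(d_0)$, because the face maps of $\pid G$ genuinely encode the action.

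Your proposed rescue, factoring $G$ through the localization $\mcald[\mcald^{-1}]$, does not by itself help: that localization is a groupoid equivalent to $B\,\text{Aut}(d_0)$ for connected $\mcald$, and you would still need to compare homotopy limits over two groupoids whose nerves are weakly equivalent---which is the original lemma again, now for groupoids. Cisinski's argument avoids this circularity by working with the derivator/homotopy-Kan-extension formalism (or, in elementary terms, by showing that for locally constant $G$ the right Kan extension along any $\theta$ with contractible fibers is an equivalence, and then using that every nerve-equivalence factors as such). If you want a self-contained proof, that is the route to take; the cotensor-with-nerve identification is only available for \emph{constant} diagrams.
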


\begin{proof}
This is just the dual of Proposition 1.17 from \cite{cis09}.
%\footnote{We have to be careful about the definition of holim or hocolim in \cite{cis09}: is that definition equivalent to ours?}. 
\end{proof}

\begin{lem} \label{pryor1_lem}
For any $U \in \om$ the nerve of the inclusion functor $\bkp(U) \hra \okp(U)$ is a homotopy equivalence for all $k, p \geq 0$.
\end{lem}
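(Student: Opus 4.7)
The plan is to apply Quillen's Theorem A to the inclusion functor $i \colon \bkp(U) \hookrightarrow \okp(U)$. It will suffice to show that for every object $W_\bullet = (W_0 \to \cdots \to W_p)$ of $\okp(U)$, the comma category $i \downarrow W_\bullet$ has contractible nerve.

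The first step is to reduce to the case $p = 0$. An object of $i \downarrow W_\bullet$ consists of a chain $V_0 \to \cdots \to V_p$ in $\bkp(U)$ together with a commuting family of isotopy equivalences $V_i \hookrightarrow W_i$. Since every horizontal morphism $V_i \to V_{i+1}$ is itself an isotopy equivalence, and in particular preserves the number of connected components and the way they sit inside the $W_i$'s, the objects $V_1, \ldots, V_p$ are essentially determined by $V_0$ together with the fixed data of $W_\bullet$. More precisely, I would check that the forgetful functor $i \downarrow W_\bullet \lra \bk(U) \downarrow W_0$ sending $V_\bullet \mapsto V_0$ is an equivalence of categories, and therefore induces a homotopy equivalence on nerves.

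For $p = 0$, write $W = W^1 \sqcup \cdots \sqcup W^j$ with $j \leq k$ as the decomposition into open balls. Any isotopy equivalence $V \hookrightarrow W$ with $V \in \bk(U)$ forces $V$ to have exactly $j$ connected components, each a basis element $V^l \in \mcalb$ contained in some $W^{\sigma(l)}$, with the inclusion again an isotopy equivalence (automatic for open balls inside open balls). Consequently, $\bk(U) \downarrow W$ decomposes, up to the natural action of the symmetric group $S_j$ permuting components, as a product $\prod_{l=1}^{j} P_l$, where $P_l := \{ V \in \mcalb \mid V \subseteq W^l \}$ is the poset of basis elements sitting inside $W^l$. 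Since nerves commute with finite products and the $S_j$-action is free, it is enough to show each $N(P_l)$ is contractible.

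The main obstacle is precisely this last step: proving that $N(P_l)$ is contractible for a poset of basis elements inside a contractible open ball $W^l$. The plan is to invoke a variant of McCord's theorem (or equivalently the nerve theorem for good open covers): since $\mcalb$ is a basis for the topology of $M$, the family $P_l$ is an open cover of $W^l$ by contractible sets, and the good basis property allows one to refine any finite intersection of elements of $P_l$ (on the points where it is nonempty) by further elements of $\mcalb$ contained in the intersection. This makes the canonical map $|N(P_l)| \to W^l$ (built from a subordinate partition of unity) a weak homotopy equivalence. Since $W^l$ is contractible, so is $N(P_l)$, and the lemma follows from Quillen's Theorem A.
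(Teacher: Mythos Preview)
Your approach has two gaps, the second of which is fatal to the strategy.

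First, the claim that ``every horizontal morphism $V_i \to V_{i+1}$ is itself an isotopy equivalence'' is false. By Definition~\ref{bkp_defn}, an object of $\bkp(U)$ is a string of arbitrary composable morphisms in $\bk(U)$, i.e.\ plain inclusions; it is only the \emph{morphisms} of $\bkp(U)$ that are required to be levelwise isotopy equivalences. (You may be confusing $\bkp$ with $\bkq$, introduced just before Lemma~\ref{fqf_lem}, where the roles are reversed.) Consequently the forgetful functor $i \downarrow W_\bullet \to (\text{isotopy equivalences into } W_0)$ is not an equivalence of categories: given $V_0$, the extensions $V_1, \ldots, V_p$ are neither unique nor canonically determined.

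Second, and more seriously, Quillen's Theorem~A cannot be applied to the inclusion $i \colon \bkp(U) \hookrightarrow \okp(U)$ at all, because the comma categories $i \downarrow W_\bullet$ can be empty. Take $M = U = \mathbb{R}$, let $\mcalb$ be the good basis consisting of open intervals of length $< 1$, and set $k=2$, $p=1$, $W_0 = (0, 0.1) \sqcup (10, 10.1)$, $W_1 = (-100, 100)$. Any object of $i \downarrow W_\bullet$ would require $V_1$ to be a single basis interval (hence of length $< 1$) containing both components of $V_0$, one lying near $0$ and the other near $10$; this is impossible. The same example shows $W_\bullet \downarrow i$ is empty as well, so Theorem~A fails in both directions. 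Your argument for $p=0$ is fine---there the comma category over $W$ really is a product $\prod_l P_l$ of posets of basis balls, and your nerve-theorem reasoning shows each $P_l$ is contractible---but the obstruction for $p \geq 1$ is intrinsic to the method, not merely to your reduction step. The paper does not supply its own proof here but defers to Pryor~\cite{pryor15}, whose argument necessarily proceeds by a different route.
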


\begin{proof}
This is done in the course of the proof of Theorem 6.12 from \cite{pryor15}. 
\end{proof}

\begin{lem} \cite[Lemma 6.8]{pryor15}  \label{pryor2_lem}
Let $f \colon U \hra U'$ be a morphism of $\om$. If $f$ is an isotopy equivalence, then the nerve of the inclusion functor $\bkp(U) \hra \bkp(U')$ is a homotopy equivalence. 
\end{lem}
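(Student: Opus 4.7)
The plan is to reduce to the maximal good basis $\mcalo$ and then exploit the fact that the endpoint $L_1$ of the isotopy $L$ from Definition~\ref{iso_eq_defn} is actually a diffeomorphism from $U$ onto $U'$.

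First I would apply Lemma~\ref{pryor1_lem} to both $U$ and $U'$: the inclusions $\bkp(U) \hookrightarrow \okp(U)$ and $\bkp(U') \hookrightarrow \okp(U')$ both induce homotopy equivalences on nerves. By two-out-of-three, the statement is equivalent to showing that the nerve of the inclusion $\okp(U) \hookrightarrow \okp(U')$ is a homotopy equivalence. The advantage of this reduction is that the family $\mcalo$ is stable under arbitrary diffeomorphisms of $M$, whereas a general basis $\mcalb$ is not.

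Second, since $L_1 \colon U \to U'$ is a smooth embedding satisfying $L_1(U) = U'$, it is a diffeomorphism. The pushforward $(L_1)_* \colon \okp(U) \to \okp(U')$ sending $V_0 \to \cdots \to V_p$ to $L_1(V_0) \to \cdots \to L_1(V_p)$ is therefore an isomorphism of categories (with inverse induced by $L_1^{-1}$), and in particular a homotopy equivalence on nerves. It thus suffices to exhibit a simplicial homotopy on nerves between $N(\iota_*)$ and $N((L_1)_*)$, where $\iota_*$ denotes the inclusion functor $\okp(U) \hookrightarrow \okp(U')$.

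To build this homotopy I would use the isotopy $L_t$ directly: for each $V \in \ok(U)$, the family $t \mapsto L_t(V)$ is a continuous path in $\ok(U')$ joining $\iota_*(V)$ and $(L_1)_*(V)$. By choosing a sufficiently fine partition $0 = t_0 < \cdots < t_n = 1$ and enclosing each trace $\bigcup_{t \in [t_i, t_{i+1}]} L_t(V)$ in a small tubular neighborhood inside $U'$, one obtains a zigzag of natural isotopy equivalences in $\okp(U')$ connecting $\iota_*$ and $(L_1)_*$, and this descends to a simplicial homotopy on nerves. The main technical obstacle will be executing this last step fully naturally in $V_\bullet \in \okp(U)$ so that the zigzag genuinely consists of natural transformations of functors rather than merely a pointwise homotopy; I expect this can be handled, as in Pryor's \cite[Lemma 6.8]{pryor15}, by uniform compactness arguments together with the isotopy extension theorem applied to compactly contained subfamilies.
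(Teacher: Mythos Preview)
The paper does not prove this lemma at all; it is quoted verbatim as \cite[Lemma~6.8]{pryor15} with no argument supplied. So your proposal already goes further than the paper does.

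Your reduction in the first two paragraphs is correct and clean: by Lemma~\ref{pryor1_lem} applied to both $U$ and $U'$ and two-out-of-three, it suffices to treat the maximal basis $\mcalo$, and then $(L_1)_*\colon \okp(U)\to\okp(U')$ is an isomorphism of categories because $L_1$ is a diffeomorphism onto $U'$.

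The gap is in your third paragraph, and it is exactly the one you flag. A zigzag of natural transformations between $\iota_*$ and $(L_1)_*$ would require, at each stage, a \emph{functor} $\okp(U)\to\okp(U')$ together with componentwise isotopy-equivalence inclusions that are natural in $V_\bullet$. Your construction ``enclose the trace $\bigcup_{t\in[t_i,t_{i+1}]} L_t(V)$ in a small tubular neighborhood'' is not functorial: the neighborhood depends on $V$ and there is no canonical choice compatible with inclusions $V\hookrightarrow V'$ in $\ok(U)$. Worse, no single partition $0=t_0<\cdots<t_n=1$ can serve for all $V\in\ok(U)$ simultaneously, because the required fineness depends on how fast the isotopy moves $V$, and $\ok(U)$ contains arbitrarily large open sets whose boundaries come arbitrarily close to $\partial U$. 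The isotopy extension theorem does not rescue this: it applies to isotopies of \emph{compact} submanifolds, not to the open inclusion $U\hookrightarrow U'$, and even if one had an ambient isotopy $\widetilde{L}_t$ of $U'$ one would face the identical discretization problem for the path of automorphisms $(\widetilde{L}_t)_*$ of $\okp(U')$.

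In short, your outline reduces the lemma to the right core statement, but the remaining step is precisely the substantive content of Pryor's Lemma~6.8, and your sketch does not supply it. Since the paper itself simply cites Pryor here, the honest move is either to do the same, or to consult \cite{pryor15} for the actual mechanism (which does not proceed via a global zigzag of natural transformations of the kind you propose).
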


\begin{prop} \label{sosp_prop}
Let $\mcalm$ be a simplicial model category, and let $F \colon \okm \lra \mcalm$ be an isotopy cofunctor (see Definition~\ref{isotopy_cof_defn}).  Let $U$ be an object of $\om$.  Consider   $\theta \colon \bkp(U) \hra \okp(U)$,  the inclusion functor. Also consider $\ftpo \colon \okp(U) \lra \mcalm$, the cofunctor from Definition~\ref{fsbp_defn}.   Then  the canonical map  $[\theta; \ftpo] \colon \fsop(U) \lra \fsbp(U)$ is a weak equivalence for all $p \geq 0$.  Furthermore that map is natural in $U$. 
\end{prop}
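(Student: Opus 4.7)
The plan is to apply Lemma~\ref{cisinski_lem} directly to the inclusion functor $\theta \colon \bkp(U) \hra \okp(U)$ and the diagram $\ftpo \colon \okp(U) \lra \mcalm$. Two hypotheses need verification: local constancy of $\ftpo$, and the fact that the nerve of $\theta$ is a weak equivalence. Naturality in $U$ will then be extracted from the compatibility of the construction $[-;-]$ of Proposition~\ref{induced_holim_prop} with composition of functors.

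First, $\ftpo$ is locally constant by Example~\ref{loc_expl} applied to the maximal good basis $\mcalo$: by Definition~\ref{bkp_defn} every morphism in $\okp(U)$ from $V_0 \ra \cdots \ra V_p$ to $W_0 \ra \cdots \ra W_p$ is a compatible family of isotopy equivalences $f_i \colon V_i \hra W_i$, and $\ftpo$ sends it to $F(f_0) \colon F(W_0) \lra F(V_0)$, which is a weak equivalence since $F$ is an isotopy cofunctor. Second, the nerve of $\theta$ is a homotopy equivalence by Lemma~\ref{pryor1_lem}. Taking $G = \ftpo$ in Lemma~\ref{cisinski_lem} then yields the desired weak equivalence $[\theta; \ftpo] \colon \fsop(U) \lra \fsbp(U)$.

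For naturality in $U$, I would consider a morphism $j \colon U \hra U'$ of $\om$. It induces inclusion functors $\iota_{\mcalo} \colon \okp(U) \hra \okp(U')$ and $\iota_{\mcalb} \colon \bkp(U) \hra \bkp(U')$ satisfying $\iota_{\mcalo} \circ \theta_U = \theta_{U'} \circ \iota_{\mcalb}$ as functors $\bkp(U) \lra \okp(U')$. The structural maps of the cofunctors $\fsop$ and $\fsbp$ attached to $j$ are, by definition, $[\iota_{\mcalo}; \ftpo]$ and $[\iota_{\mcalb}; \theta_{U'}^{*}\ftpo]$, and a direct inspection of the explicit formula for $[-;-]$ given in the proof of Proposition~\ref{induced_holim_prop} (as the map induced on equalizers by projection onto the factor indexed by the image of the functor) shows that it obeys the composition rule $[\psi \circ \varphi; G] = [\varphi; \psi^{*} G] \circ [\psi; G]$. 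Applying this rule to the two factorizations of the common inclusion $\bkp(U) \hra \okp(U')$ yields
\[
[\theta_U; \ftpo] \circ [\iota_{\mcalo}; \ftpo] = [\iota_{\mcalo} \circ \theta_U; \ftpo] = [\theta_{U'} \circ \iota_{\mcalb}; \ftpo] = [\iota_{\mcalb}; \theta_{U'}^{*}\ftpo] \circ [\theta_{U'}; \ftpo],
\]
which is exactly the required commutativity of the naturality square. No substantial obstacle is expected; the only care needed is to recognise that the composition rule for $[-;-]$ reduces naturality to the identity of functors $\iota_{\mcalo} \theta_U = \theta_{U'} \iota_{\mcalb}$.
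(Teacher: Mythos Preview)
Your proof is correct and follows essentially the same route as the paper's: apply Lemma~\ref{cisinski_lem} after checking local constancy of $\ftpo$ via Example~\ref{loc_expl} and the nerve equivalence via Lemma~\ref{pryor1_lem}. The paper disposes of naturality in one line (``comes directly from the definitions''), whereas you spell out the composition rule for $[-;-]$ and the commuting square of inclusion functors; this extra detail is sound and only makes explicit what the paper leaves implicit.
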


\begin{proof}
Set $\mcalc := \bkp(M), \mcald := \okp(M),$ and $G := \ftpo$. Since $G$  is locally constant by Example~\ref{loc_expl}, and since the nerve of $\theta$ is a weak equivalence by Lemma~\ref{pryor1_lem}, it follows that $[\theta; \ftpo]$ is a weak equivalence by Lemma~\ref{cisinski_lem}. The naturality in $U$ comes directly from the definitions.   
\end{proof}

\begin{prop} \label{isop_prop}
Let $\mcalm$ be a simplicial model category, and let $F \colon \bkm \lra \mcalm$ be an isotopy cofunctor (see Definition~\ref{isotopy_cof_defn}). Then for any $p \geq 0$ the cofunctor $\fsbp$ (see Definition~\ref{fsbp_defn}) is an isotopy cofunctor as well. 
\end{prop}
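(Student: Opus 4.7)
The plan is to verify the two conditions of Definition~\ref{isotopy_cof_defn} for $\fsbp$, using the same toolkit (Lemma~\ref{cisinski_lem} together with one of the Pryor-type nerve-equivalence lemmas) that powered Proposition~\ref{sosp_prop}.

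First, for condition (a) (objectwise fibrancy) I would note that the auxiliary diagram $\ftpb \colon \bkp(U) \lra \mcalm$ has values among the $F(V_0)$, and these are fibrant because $F$ is an isotopy cofunctor, hence objectwise fibrant. Theorem~\ref{fib_cofib_thm}(i) then gives that $\fsbp(U) = \underset{\bkp(U)}{\text{holim}} \; \ftpb$ is fibrant, for every $U \in \om$.

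For condition (b), let $f \colon U \hra U'$ be an isotopy equivalence; I want to show the induced map $\fsbp(U') \lra \fsbp(U)$ is a weak equivalence. This map is (by construction and the functoriality in Proposition~\ref{induced_holim_prop}) precisely $[\theta; \ftpb]$, where $\theta \colon \bkp(U) \hookrightarrow \bkp(U')$ is the inclusion functor induced by $f$. To apply Lemma~\ref{cisinski_lem} to this situation I need two inputs: the diagram $\ftpb$ on $\bkp(U')$ must be locally constant, and the nerve $N(\theta)$ must be a weak equivalence. The first input holds by Example~\ref{loc_expl} (this uses exactly the hypothesis that $F$ is an isotopy cofunctor, so that every morphism of $\bkp(U')$, being a tuple of isotopy equivalences, is sent by $\ftpb$ to a weak equivalence). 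The second input is the content of Lemma~\ref{pryor2_lem}, whose hypothesis is satisfied since $f$ is assumed to be an isotopy equivalence. Hence Lemma~\ref{cisinski_lem} gives that $[\theta; \ftpb]$ is a weak equivalence, proving condition (b).

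I do not expect a genuine obstacle here: the argument is a direct transcription of the strategy used in Proposition~\ref{sosp_prop}, only now the inclusion being plugged into the cofinality-style Lemma~\ref{cisinski_lem} is $\bkp(U) \hookrightarrow \bkp(U')$ (whose nerve is a homotopy equivalence by Lemma~\ref{pryor2_lem}) rather than $\bkp(U) \hookrightarrow \okp(U)$. The only minor point worth being careful about is to check that the map $\fsbp(U') \lra \fsbp(U)$ manufactured from the inclusion $U \hookrightarrow U'$ really coincides with the map $[\theta; \ftpb]$ produced by Proposition~\ref{induced_holim_prop}, but this is immediate from unwinding $\ftpb$ on nerve-level strings and the naturality statement~\eqref{theta_square}.
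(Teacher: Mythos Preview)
Your proposal is correct and follows essentially the same approach as the paper: the paper's proof also applies Lemma~\ref{cisinski_lem} to the inclusion $\theta \colon \bkp(U) \hookrightarrow \bkp(U')$, invoking Example~\ref{loc_expl} for local constancy of $\ftpb$ and Lemma~\ref{pryor2_lem} for the nerve equivalence. You are slightly more explicit in separately checking objectwise fibrancy via Theorem~\ref{fib_cofib_thm}(i), which the paper leaves implicit.
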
 

\begin{proof}
Let $U \hra U'$ be an isotopy equivalence, and let $\theta \colon \bkp(U) \hra \bkp(U')$ denote the inclusion functor. Consider the cofunctor $\ftpb \colon \bkp(U') \lra \mcalm$ from Definition~\ref{fsbp_defn}. Since $\ftpb$ is locally constant by Example~\ref{loc_expl}, and since the nerve of $\theta$ is a weak equivalence by Lemma~\ref{pryor2_lem}, the desired result follows by Lemma~\ref{cisinski_lem}.  
\end{proof}

\subsection{Grothendieck construction}  \label{gro_const_subsection}

In this subsection we recall the Grothendieck construction, and we give some examples that will be used further. We also recall an important result (see Theorem~\ref{cha_sch_thm}), which regards the homotopy limit of a diagram indexed by the Grothendieck construction.

\begin{defn} \label{intcf_defn}
Let $\mcalc$ be a small category, and let $\mcalf \colon \mcalc \lra \text{Cat}$ be a covariant functor from $\mcalc$ to the category Cat of small categories. Define $\int_{\mcalc} \mcalf$ to be the category whose objects are pairs $(c, x)$ where $c \in \mcalc$ and $x \in \mcalf(c)$. A morphism $(c, x) \lra (c', x')$ consists of a pair $(f, g)$, where $f \colon c \lra c'$ is a morphism of $\mcalc$, and $g \colon \mcalf(f)(x) \lra x'$ is a morphism of $\mcalf(c')$.  The construction that sends $\mcalf \colon \mcalc \lra \text{Cat}$ to $\int_{\mcalc} \mcalf$ is called the \emph{Grothendieck construction}. 
\end{defn}

Here are two examples of the Grothendieck construction. The first one will be used in Subsection~\ref{sos_good_subsection}, while the second will be used in Section~\ref{poly_section}. 

\begin{expl} \label{intcf_expl1}
Let $\mcald_0 \hra \mcald_1  \hra \mcald_2 \cdots $ be an increasing inclusion of small categories. Define $\mcalc$ to be the category $\{0 \ra 1 \ra 2 \ra \cdots \}$, and $\mcalf \colon \mcalc \lra \text{Cat}$ as $\mcalf(i) = \mcald_i$. Then one can see that 
\[
 \int_{\mcalc} \mcalf  = \mcalc \times \left(\bigcup_{i=0}^{\infty} \mcald_i\right). 
 \]
\end{expl}

\begin{expl} \label{intcf_expl2}
Let $k \geq 0$, and let $\mcalc$ be the category defined as 
\[
\mcalc = \left\{S \subseteq \{0, \cdots, k\}\ \ \text{such that} \   \  S \neq \emptyset \right\}. 
\]
Given two objects $S, T \in \mcalc$, there exists a morphism from $S$ to $T$ if  $T \subseteq S$.   If $U$ is an open subset of $M$, and $A_0, \cdots, A_k$ are pairwise disjoint closed subsets of $U$, we let $\Omega$ denote the basis (for the topology of $U$) where an element is a subset $B$ diffeomorphic to an open  ball such that $B$ intersects at most one $A_i$. In other words, if one introduces the notation $U(S):= U \backslash \cup_{i \in S} A_i$, then 
\[
\Omega := \left\{B \subseteq U| \; \text{$B$ is diffeomorphic to an open ball and $B \subseteq U\left(\{0, \cdots, \hat{i}, \cdots, k \}\right)$}  \right\},
\]
where the \lq\lq hat\rq\rq{} means taking out.  Certainly $\Omega$ is a good basis (see Definition~\ref{gb_defn}).  Now, for $V \in \mcalo(U)$ we  let $\Omega_k(V) \subseteq \ok(V)$ denote the full  subposet whose objects are  disjoint unions of at most $k$ elements from $\Omega$. Define 
\[
\mcalf \colon \mcalc \lra \text{Cat}  \quad \text{as} \quad  \mcalf(S) := \Omega_k(U(S)).
\]
 Clearly, one has  $\mcalf(S) \subseteq \mcalf(T)$ whenever $T \subseteq S$. So $\mcalf(S \ra T)$ is just the inclusion functor.  One can then consider the category $\int_{\mcalc} \mcalf$ (see Definition~\ref{intcf_defn}), which can be described as follows. An object of that category is a pair $(S, V)$ where $\emptyset \neq S \subseteq \{0, \cdots, k\}$, and $V \subseteq U \backslash \cup_{i \in S} A_i$ is the disjoint union of at most $k$ elements from $\Omega$. There exists a morphism $(S, V) \lra (T, W)$ if and only if $T \subseteq S$ and $V \subseteq W$. 
\end{expl}

\begin{thm}\cite{cha_sch01}   \label{cha_sch_thm}
Let $\mcalm$ be a simplicial model category. Let $\mcalc$ be a small category and let $\mcalf \colon \mcalc \lra \text{Cat}$ be a covariant functor.  Consider a collection  $\{G_c \colon \colon \mcalf(c) \lra \mcalm\}_{c \in \mcalc}$ of functors such that for any $f \colon c \lra c'$ in $\mcalc$ the following triangle commutes.
\[
\xymatrix{\mcalf(c') \ar[rr]^-{G_{c'}}  \ar[d]_-{\mcalf(f)}  &  & \mcalm \\
           \mcalf(c) \ar[rru]_-{G_c}  &  &      }
\]
Then the canonical map 
\[
 \beta \colon \underset{(c, x) \in \int_{\mcalc} \mcalf}{\text{holim}} \; G_c(x) \lra \underset{c \in \mcalc}{\text{holim}} \; \underset{x \in \mcalf(c)}{\text{holim}} \; G_c(x)
\]
is a weak equivalence (see Definition~\ref{intcf_defn}). 
\end{thm}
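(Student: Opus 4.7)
My approach would be to pass to cosimplicial replacements and identify both sides as totalizations of (up to a diagonal argument) the same bi-cosimplicial object.

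First I would construct $\beta$ explicitly. For each $c \in \mcalc$, the fiber inclusion $\iota_c \colon \mcalf(c) \hra \int_{\mcalc} \mcalf$, $x \mapsto (c,x)$, satisfies $\iota_c^{*} G = G_c$ by the compatibility hypothesis on the $G_c$, where we view $G$ as the total functor $(c,x) \mapsto G_c(x)$ on the Grothendieck construction (well-defined precisely because of the commuting triangle). Proposition~\ref{induced_holim_prop} then supplies maps $[\iota_c; G] \colon \underset{\int_{\mcalc} \mcalf}{\text{holim}} G \to \underset{\mcalf(c)}{\text{holim}} G_c$, and the naturality clause~(\ref{nat_theta}) of that proposition---applied to the comparison of $\iota_c$ with $\iota_{c'}\circ\mcalf(f)$ via the cocartesian morphisms $(c,x) \to (c',\mcalf(f)(x))$---ensures these maps are compatible as $c$ varies, so they assemble into $\beta$.

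Next I would apply Corollary~\ref{hir_coro} to replace each homotopy limit by the totalization of its cosimplicial replacement; objectwise fibrancy on both sides is provided by Theorem~\ref{fib_cofib_thm}, and Reedy fibrancy of the replacements by Proposition~\ref{rf_thm}. The right-hand side becomes $\tot\, \pid_{\mcalc}(c \mapsto \tot\, \pid_{\mcalf(c)} G_c)$, which by a bi-cosimplicial Fubini argument in the spirit of Theorem~\ref{fubini_thm} equals the totalization of a bi-cosimplicial object with $(n,m)$-entry $\prod_{c_0 \to \cdots \to c_n \in N_n\mcalc} \prod_{y_0 \to \cdots \to y_m \in N_m \mcalf(c_n)} G_{c_n}(y_m)$. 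The left-hand totalization $\tot\, \pid_{\int_{\mcalc} \mcalf} G$ meanwhile has $n$th term $\prod_{(c_0,x_0) \to \cdots \to (c_n,x_n)} G_{c_n}(x_n)$. The key combinatorial observation is a natural bijection between $n$-simplices of $N(\int_{\mcalc} \mcalf)$ and pairs consisting of an $n$-simplex $c_0 \to \cdots \to c_n$ of $N\mcalc$ together with an $n$-simplex of $N\mcalf(c_n)$ obtained by successively transporting the $x_i$'s along $\mcalf(c_i \to c_n)$; the compatibility condition then matches corresponding factors, producing an isomorphism of the (diagonal of the) cosimplicial objects compatible with faces and degeneracies. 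Proposition~\ref{comm_prop} finally guarantees that the resulting isomorphism of totalizations really does agree with the map induced by $\beta$.

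The principal obstacle I expect is the bi-cosimplicial Fubini step: one must pass between an iterated totalization and the totalization of a single bi-cosimplicial object (or its diagonal), and verify Reedy fibrancy of the intermediate objects throughout. This is also where Proposition~\ref{comm_prop} becomes indispensable---without it one has a combinatorial isomorphism of products but no guarantee that the resulting map on totalizations is the specific natural comparison $\beta$ rather than some other candidate. Secondarily, care is required with the variance of $\mcalf(f)$ and with the direction of the natural transformations used when invoking~(\ref{nat_theta}), since the Grothendieck construction mixes covariant data from $\mcalf$ with the contravariance customary to manifold calculus.
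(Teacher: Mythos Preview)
The paper does not prove this theorem at all: its proof consists of the single sentence ``This is the dual of \cite[Theorem 26.8]{cha\_sch01}.'' So there is nothing to compare approaches with on the paper's side; the result is imported wholesale from Chach\'olski--Scherer.

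Your direct attack via cosimplicial replacements is a reasonable instinct, but the argument has a genuine gap at the step you call ``the key combinatorial observation.'' You assert a natural bijection between $n$-simplices of $N\bigl(\int_{\mcalc}\mcalf\bigr)$ and pairs consisting of an $n$-simplex $c_0\to\cdots\to c_n$ of $N\mcalc$ together with an $n$-simplex of $N\mcalf(c_n)$. This is false in general. The transport map you describe (sending $(c_i,x_i)$ to $\mcalf(c_i\to c_n)(x_i)$) is well defined, but it is not a bijection: given an arbitrary chain $y_0\to\cdots\to y_n$ in $\mcalf(c_n)$ there is no reason each $y_i$ should lie in the image of $\mcalf(c_i\to c_n)$, since these functors need not be surjective on objects. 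What is true is Thomason's theorem, which gives a \emph{weak equivalence} between $N\bigl(\int_{\mcalc}\mcalf\bigr)$ and the homotopy colimit (i.e.\ the diagonal of the bisimplicial set $[p,q]\mapsto\coprod_{\sigma\in N_p\mcalc} N_q\mcalf(\sigma(p))$), not an isomorphism. Consequently the two cosimplicial objects you are comparing are not levelwise isomorphic, and the argument as written does not go through.

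To salvage the strategy you would need to replace the claimed isomorphism by a zigzag of levelwise weak equivalences of Reedy fibrant cosimplicial objects, which is essentially what the Chach\'olski--Scherer machinery (or a careful use of Thomason together with a cofinality/Fubini argument) accomplishes. That is substantially more work than a combinatorial bijection, and is precisely why the paper defers to the cited reference.
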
 

\begin{proof}
This is the dual of \cite[Theorem 26.8]{cha_sch01}.
%\footnote{Also here, is the definition of holim or hocolim in \cite{cha_sch01} equivalent to ours? I guess yes}. 
\end{proof}

\subsection{Special open sets and good cofunctors}   \label{sos_good_subsection}

The goal of this subsection is to prove Theorem~\ref{sos_thm} and Theorem~\ref{good_thm} announced at the beginning of Section~\ref{sos_good_section}.

To prove Theorem~\ref{sos_thm} we will need Lemma~\ref{fqf_lem} below. First we need to introduce some notation. For $U \in \om, q \geq 0$ we let $\bkq(U)$ denote the poset whose objects are strings $W_0 \ra \cdots \ra W_q$ of $q$ composable morphisms in $\bk(U)$ such that $W_i \ra W_{i+1}$ is an isotopy equivalence for all $i$. A morphism from $W_0 \ra \cdots \ra W_q$ to $W'_0 \ra \cdots \ra W'_q$ consists of a collection $f = \{f_i \colon W_i \lra W'_i\}_{i=0}^{q}$ of morphisms of  $\bk(U)$ such that all the obvious squares commute. Now let $F \colon \bkm \lra \mcalm$ be an objectwise fibrant cofunctor. Define a new cofunctor $\fhb^q \colon \bkq (U) \lra \mcalm$ as 
\begin{eqnarray} \label{fhq}
\fhb^q (W_0 \ra \cdots \ra W_q) := F(W_0).
\end{eqnarray}
Also define $\fhb^{!q} \colon \om \lra \mcalm$ as 
\begin{eqnarray} \label{fhsq}
\fhb^{!q} (U) := \underset{\bkq (U)}{\text{holim}} \; \fhb^q.  
\end{eqnarray}

\begin{rmk} \label{fsp_bkp_rmk}
\begin{enumerate} 
\item[(i)] As in Remark~\ref{fsp_rmk}, the collection $\fhb^{!\bullet} (U) = \{\fhb^{!q} (U)\}_{q \geq 0}$ is a cosimplicial object in $\mcalm$ for all $U \in \om$. 
\item[(ii)] Recall the poset $\bkp (U)$ and the functor $\ftb^p$ from Definition~\ref{bkp_defn} and Definition~\ref{fsbp_defn} respectively. Also recall $\pid (-)$ from Definition~\ref{cr_defn}. Then one can easily  see that 
\[
\Pi^q \ftb^p = \Pi^p \fhb^q
\]
since the set of $q$-simplices of the nerve $N(\bkp (U))$ is equal to the set of $p$-simplices of the nerve $N(\bkq (U))$. 
\end{enumerate}
\end{rmk}

\begin{lem} \label{fqf_lem}
Let $\mcalm$ be a simplicial model category. For $U \in \om$, consider the functor $\theta \colon \bk(U) \lra \bkq (U)$ defined as $\theta(V) := V \ra \cdots \ra V$. Then the canonical map 
\begin{eqnarray} \label{fqf_eq}
[\theta; \fhb^q] \colon \fhb^{!q} (U) \lra F^{!}_{\mcalb} (U)
\end{eqnarray}
is a weak equivalence.  Furthermore this map is natural in $U$. 
\end{lem}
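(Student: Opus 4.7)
The plan is to apply the cofinality theorem (Theorem~\ref{htpy_cofinal_thm}) directly to the functor $\theta$. First, I would observe that the pullback $\theta^{*} \fhb^q \colon \bk(U) \lra \mcalm$ sends $V$ to $\fhb^q(V \ra \cdots \ra V) = F(V)$, so that $\underset{\bk(U)}{\text{holim}}\; \theta^{*}\fhb^q = F^{!}_{\mcalb}(U)$; thus the map $[\theta; \fhb^q]$ of Proposition~\ref{induced_holim_prop} is indeed the map of the lemma. Since $F$ is objectwise fibrant and $\fhb^q(W_0 \ra \cdots \ra W_q) = F(W_0)$, the cofunctor $\fhb^q$ is objectwise fibrant, so by Theorem~\ref{htpy_cofinal_thm} it is enough to prove that $\theta \colon \bk(U) \lra \bkq(U)$ is homotopy right cofinal.

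Fix an object $W = (W_0 \hra W_1 \hra \cdots \hra W_q)$ of $\bkq(U)$. By Definition~\ref{cofinal_defn}, I need to show that the under category $W \dar \theta$ is contractible. An object of $W \dar \theta$ is a pair $(V, f)$, where $V \in \bk(U)$ and $f \colon W \lra \theta(V) = (V = V = \cdots = V)$ is a morphism of $\bkq(U)$. Since morphisms of $\bk(U)$ are inclusions, and the horizontal arrows in $\theta(V)$ are identities, the datum of $f$ is equivalent to a single inclusion $W_q \subseteq V$ (the remaining inclusions $W_i \subseteq V$ are automatic from $W_i \subseteq W_q$, and the square compatibility is trivial). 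A morphism $(V, f) \lra (V', f')$ is then an inclusion $V \hra V'$ of $\bk(U)$, so $W \dar \theta$ is naturally the poset $\{V \in \bk(U) \mid W_q \subseteq V\}$.

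The key observation is that $W_q$ itself is an object of $\bk(U)$ (since it appears in the string defining $W \in \bkq(U)$), and the pair $(W_q, \mathrm{id})$, i.e.\ the inclusions $W_i \hra W_q$ viewed as a map $W \lra \theta(W_q)$, is an initial object of $W \dar \theta$: any other object $(V, f)$ satisfies $W_q \subseteq V$, providing the unique morphism from $(W_q, \mathrm{id})$. Therefore $N(W \dar \theta)$ is contractible, $\theta$ is homotopy right cofinal, and $[\theta; \fhb^q]$ is a weak equivalence by Theorem~\ref{htpy_cofinal_thm}.

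For naturality in $U$, I would note that an inclusion $U \hra U'$ induces functors $\bk(U) \hra \bk(U')$ and $\bkq(U) \hra \bkq(U')$ which are compatible with $\theta$ (both sides restrict to $\theta$ on $\bk(U)$). Naturality of the construction $[\,-\,;\,-\,]$ in its first variable, as recorded in Proposition~\ref{induced_holim_prop}, then yields a commuting square of holims, giving naturality of (\ref{fqf_eq}) in $U$. The only step requiring care is the verification that an initial object exists in $W \dar \theta$; once that is in place, the rest is a direct appeal to Theorem~\ref{htpy_cofinal_thm} and the naturality statement of Proposition~\ref{induced_holim_prop}.
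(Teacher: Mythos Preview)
Your proof is correct and follows essentially the same approach as the paper: both arguments show that $\theta$ is homotopy right cofinal by exhibiting $(W_q, f)$ as an initial object of the under category $W \dar \theta$, and then invoke Theorem~\ref{htpy_cofinal_thm}. Your version is more detailed in unpacking why the under category identifies with $\{V \in \bk(U) \mid W_q \subseteq V\}$ and in spelling out the naturality in $U$, but the core idea is identical.
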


\begin{proof}
Let $W_0 \ra \cdots \ra W_q \in \bkq (U)$. The under category $(W_0 \ra \cdots \ra W_q) \downarrow \theta$ is contractible since it has an initial object, namely $(W_q, f)$ where $f$ is the obvious map from $W_0 \ra \cdots \ra W_q$ to $W_q \ra \cdots \ra W_q$. So $\theta$ is homotopy right cofinal, and therefore the map $[\theta; \fhb^q]$ is a weak equivalence by Theorem~\ref{htpy_cofinal_thm}. The naturality of that map in $U$ is readily checked. 
\end{proof}

We are now ready to prove Theorem~\ref{sos_thm}.  

\begin{proof}[Proof of Theorem~\ref{sos_thm}]
In the following proof we will work with $\mcalb'=\mcalo$. Notice that one can perform exactly the same proof with any good basis $\mcalb'$ containing $\mcalb$. 

Let $U \in \om$. Recall $\fsbp$ from Definition~\ref{fsbp_defn}. We will show that the objects $\underset{[p] \in \Delta}{\text{holim}} \; \ftb^{!p} (U)$ and $F^{!}_{\mcalb} (U)$ are connected by a zigzag of natural weak equivalences. Consider the following diagram. 
\[
\xymatrix{\hpd \; \hbkp \; \ftb^p \ar[r]^-{\cong}  & \hpd \;  \tot \; \pid \ftb^p \ar[r]^-{\sim} & \hpd \; \hqd \; \Pi^q \ftb^p \ar[r]^-{\sim} & \hqd \; \hpd \; \Pi^q \ftb^p  \\
               \hpd \; \hokp \;  \ftpo \ar[u] \ar[r]_-{\cong} & \hpd \; \tot \; \pid \ftpo \ar[u] \ar[r]_-{\sim} & \hpd \; \hqd \; \Pi^q \ftpo \ar[u]  \ar[r]_-{\sim}  & \hqd \; \hpd \; \Pi^q \ftpo \ar[u]_-{\lambda} }
\] 
\begin{enumerate}
\item[$\bullet$] In the first row
       \begin{enumerate}
       \item[-] the first map is the homotopy limit of the map (\ref{phic_map}),
       \item[-] the second is the homotopy limit of the Bousfield-Kan map from Corollary~\ref{hir_coro}, and 
       \item[-] the third is provided by the Fubini Theorem~\ref{fubini_thm}. 
       \end{enumerate}
 \item[$\bullet$] The maps in the second row are obtained in the similar way  since  $\mcalb$ is a subposet of $\mcalo$. 
 \item[$\bullet$] The lefthand vertical map is nothing but the homotopy limit of $[\theta; \ftpo]$, where $\theta \colon \bkp (U) \hra \okp (U)$ is just the inclusion functor. 
 \item[$\bullet$] The three others are the canonial ones induced by the map $\beta^{\bullet}_{\okp (U)}$ from Remark~\ref{betax_rmk}.      
\end{enumerate}
Certainly the lefthand square commutes by Proposition~\ref{comm_prop}. The middle one commutes by the fact that the Bousfield-Kan map $\tot \; Z^{\bullet} \lra \underset{\Delta}{\text{holim}} \; Z^{\bullet}$ is natural in $Z^{\bullet}$. The third square commutes since the map in the Fubini  Theorem~\ref{fubini_thm} is also natural.  So the above diagram is commutative. Therefore, since the first vertical map is a weak equivalence by Proposition~\ref{sosp_prop} and Theorem~\ref{fib_cofib_thm}, it follows that the last one, $\lambda$, is also a weak equivalence. 

Similarly the following diagram (in which the equality comes from Remark~\ref{fsp_bkp_rmk}) is commutative as well. 
\[
\xymatrix{ \hqd \; \hpd \; \Pi^q \ftpb  \ar@{=}[r]  &  \hqd \; \hpd \; \Pi^p \fhb^q  & \hqd \; \tot \; \pid \fhb^q \ar[l]_-{\sim} & \hqd \;  \hbkq  \fhb^q \ar[l]_-{\cong} \\
                 \hqd \; \hpd \; \Pi^q \fto  \ar@{=}[r]  \ar[u]_-{\lambda}^-{\sim} & \hqd \; \hpd \; \Pi^p \fho^q  \ar[u] &  \hqd \; \tot \; \pid \fho^q \ar[l]^-{\sim}   \ar[u] & \hqd \; \hokq \; \fho^q \ar[l]^-{\cong}  \ar[u]_-{\varphi}.   }
\]
So the map $\varphi$ is a weak equivalence. 

Now consider the following  square. 
\[
\xymatrix{\hqd \;  \fhb^{!q} (U) \ar[rr]^-{\sim}  &   &  \fsb (U)  \\
                \hqd \; \fho^{!q} (U) \ar[rr]_-{\sim} \ar[u]^-{\varphi}_-{\sim}  &    &  \fso (U), \ar[u] }
\]
where the top horizontal arrow is the homotopy limit of the map (\ref{fqf_eq}), which is itself a weak equivalence by Lemma~\ref{fqf_lem}. In similar fashion the bottom horizontal map is a weak equivalence. The lefthand vertical map is the above map $\varphi$, which is a weak equivalence. So, since the square commutes, it follows that the righthand vertical map is also a weak equivalence. We thus obtain the desired result. 
\end{proof}

To prove Theorem~\ref{good_thm} we will need the following lemma. 

\begin{lem} \label{gd_lem}
Let $U \in \om$, and let $\bkb (U) \subseteq \bk(U)$ be the full subposet defined as 
\begin{eqnarray} \label{bkb_eq}
\bkb (U) = \{V \in \bk(U)| \; \overline{V} \subseteq U\}.
\end{eqnarray}
Here $\overline{V}$ stands for the closure of $V$. Let $\mcalm$ be a simplicial model category.  Consider an isotopy cofunctor $F \colon \bk (U) \lra \mcalm$. Then the canonical map 
\begin{eqnarray} \label{good_eq}
[\theta; F] \colon \underset{V \in \bk(U)}{\text{holim}} \; F(V) \lra \underset{V \in \bkb(U)}{\text{holim}} \; F(V),
\end{eqnarray}
induced by the inclusion functor $\theta \colon \bkb (U) \lra \bk (U)$, is a weak equivalence. 
\end{lem}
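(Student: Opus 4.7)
The inclusion $\theta$ is not itself homotopy right cofinal in the sense of Definition~\ref{cofinal_defn}: for $V \in \bk(U) \setminus \bkb(U)$ the under category $V \downarrow \theta$ is empty, since any $V' \in \bkb(U)$ with $V \subseteq V'$ would force $\overline{V} \subseteq \overline{V'} \subseteq U$, contradicting $V \notin \bkb(U)$. So Theorem~\ref{htpy_cofinal_thm} does not apply to $\theta$ directly, and the plan is to exploit the isotopy cofunctor hypothesis on $F$ via a shrinking construction.

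The geometric input I would use is that every $V \in \bk(U)$ admits $V^* \in \bkb(U)$ with $V^* \subseteq V$ such that $V^* \hookrightarrow V$ is an isotopy equivalence (Definition~\ref{iso_eq_defn}). Indeed, $V$ is a disjoint union of at most $k$ subsets of $M$ each diffeomorphic to an open ball; shrinking each component to a concentric sub-ball yields such a $V^*$ with $\overline{V^*} \subseteq V \subseteq U$, the radial expansion providing the required isotopy $L$.

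I would then introduce the auxiliary poset $\mcalc$ whose objects are pairs $(V, V^*)$ with $V \in \bk(U)$, $V^* \in \bkb(U)$, $V^* \subseteq V$, and $V^* \hookrightarrow V$ an isotopy equivalence; morphisms are pairs of compatible inclusions. There are natural projection functors $\pi_1 \colon \mcalc \lra \bk(U)$ and $\pi_2 \colon \mcalc \lra \bkb(U)$. For $\pi_2$, the under category $V^* \downarrow \pi_2$ has $((V^*, V^*), \text{id})$ as an initial object, so $\pi_2$ is homotopy right cofinal and Theorem~\ref{htpy_cofinal_thm} yields a weak equivalence $\underset{\bkb(U)}{\text{holim}} \; F \lra \underset{\mcalc}{\text{holim}} \; F \circ \pi_2$. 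For $\pi_1$, I would apply the Grothendieck construction Theorem~\ref{cha_sch_thm} with $\mcalf(V) := \pi_1^{-1}(V)$ to rewrite
\[
\underset{\mcalc}{\text{holim}} \; F \circ \pi_2 \; \simeq \; \underset{V \in \bk(U)}{\text{holim}} \; \underset{V^* \in \pi_1^{-1}(V)}{\text{holim}} \; F(V^*),
\]
and then argue that the inner limit is weakly equivalent to $F(V)$. Composing these weak equivalences produces a zigzag connecting $\underset{\bk(U)}{\text{holim}} \; F$ with $\underset{\bkb(U)}{\text{holim}} \; F$, and one checks that the composite recovers $[\theta; F]$ from Proposition~\ref{induced_holim_prop}.

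The main obstacle is justifying the inner equivalence $\underset{V^* \in \pi_1^{-1}(V)}{\text{holim}} \; F(V^*) \simeq F(V)$. Every morphism in $\pi_1^{-1}(V)$ is an isotopy equivalence by construction (any two nested shrinkings $V_1^* \subseteq V_2^*$ of the same $V$ are themselves isotopy equivalent, as each component is a pair of concentric sub-balls of a common ball-component of $V$), so $F$ restricted to $\pi_1^{-1}(V)$ is locally constant (Definition~\ref{loc_defn}). By Lemma~\ref{cisinski_lem} it then suffices to show that the nerve of $\pi_1^{-1}(V)$ is weakly equivalent to a point. This is the core technical step, which I would handle by exhibiting a cofiltering of $\pi_1^{-1}(V)$ via nested component-wise shrinkings and a contracting homotopy built from a one-parameter scaling on each component of $V$, in the spirit of the cofinality argument in the proof of Proposition~\ref{fsrik_prop}.
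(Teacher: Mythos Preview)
Your Grothendieck construction step over $\pi_1$ does not go through as written. To invoke Theorem~\ref{cha_sch_thm} you need a genuine functor $\mcalf \colon \bk(U) \lra \text{Cat}$ with $\int_{\bk(U)} \mcalf = \mcalc$, but setting $\mcalf(V) := \pi_1^{-1}(V)$ does not give one: for an inclusion $V \hra V'$ in $\bk(U)$ there is in general no induced functor $\pi_1^{-1}(V) \lra \pi_1^{-1}(V')$, because an isotopy-equivalent shrinking $V^*$ of $V$ need not be an isotopy-equivalent shrinking of $V'$ (for instance, $V'$ may have more components than $V$). Consequently $\mcalc$ is not the Grothendieck construction of the fibre assignment you name, and the decomposition
\[
\underset{\mcalc}{\text{holim}} \; F \circ \pi_2 \; \simeq \; \underset{V \in \bk(U)}{\text{holim}} \; \underset{V^* \in \pi_1^{-1}(V)}{\text{holim}} \; F(V^*)
\]
is not justified. (A smaller point: your ``concentric sub-balls'' justification that morphisms in $\pi_1^{-1}(V)$ are isotopy equivalences is not quite right either --- the shrinkings need not be concentric --- although the conclusion itself is salvageable by a $2$-out-of-$3$ type argument on components.)

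The paper's argument avoids all of this machinery. It simply observes that
\[
\overline{\mcalb} := \{ B \in \mcalb \mid \overline{B} \subseteq U \}
\]
is again a good basis, now for the topology of $U$, and that $\bkb(U)$ coincides with $\overline{\mcalb}_k(U)$. Since $\overline{\mcalb} \subseteq \mcalb$, Theorem~\ref{sos_thm} applied with $U$ in the role of the ambient manifold immediately gives that $[\theta; F]$ is a weak equivalence. In other words, the lemma is a direct corollary of Theorem~\ref{sos_thm}; no auxiliary pair category, Grothendieck construction, or contractibility of fibres is needed.
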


\begin{proof}
Let $\overline{\mcalb}$ be the following basis for the topology of $U$.
\[
\overline{\mcalb} = \{B \subseteq U| \; \text{$B \in \mcalb$ and $\overline{B} \subseteq U$}\}.
\]
Certainly $\overline{\mcalb}$ is a good basis (see Definition~\ref{gb_defn}). One can easily see that each object of $\bkb (U)$ is the disjoint union of at most $k$ elements from $\overline{\mcalb}$. So by Theorem~\ref{sos_thm}, the map $[\theta; F]$ is  a weak equivalence. 
\end{proof}

\begin{proof}[Proof of Theorem~\ref{good_thm}]
We begin with part (a) of goodness. Let $U, U' \in \om$ such that $U \subseteq U'$. Assume that the inclusion map $U \hra U'$ is an isotopy equivalence. Then the canonical map
\[
\underset{\bkp (U')}{\text{holim}} \; \ftb^p \lra \hbkp \; \ftb^p
\]
is a weak equivalence by Proposition~\ref{isop_prop}. Now, by replacing 
\begin{enumerate}
\item[$\bullet$] $\okp(U)$ by $\bkp (U')$, 
\[
\fto^p \colon \okp (U) \lra \mcalm \qquad \text{by} \qquad \ftpb \colon \bkp (U') \lra \mcalm,
\] 
\[
\fho^q \colon \okq (U) \lra \mcalm \qquad \text{by} \qquad \fhb^q \colon \bkq (U') \lra \mcalm,
\]
\item[$\bullet$] $\okq (U)$ by $\bkq (U')$, $\fho^{!q} (U)$ by $\fhb^{!q} (U')$, and $\fso (U)$ by $\fsb(U')$, 
\end{enumerate}
in the proof of Theorem~\ref{sos_thm}, we deduce that the canonical  map $\fsb(U') \lra \fsb (U)$ is a weak equivalence. This proves part (b) from Definition~\ref{isotopy_cof_defn}. Part (a) from the same definition follows immediately from the fact that $F \colon \bk(M) \lra \mcalm$ is an isotopy cofunctor by assumption, and from Theorem~\ref{fib_cofib_thm}.  

Now we show part (b) of goodness. Let $U_0 \ra U_1 \ra \cdots$ be a string of inclusions of $\om$. Consider the following commutative square. 
\[
\xymatrix{ \underset{V \in \bk (\cup_i U_i)}{\text{holim}} \; F(V) \ar[rr] \ar[d]_-{\sim}   &   &  \underset{i}{\text{holim}} \; \underset{V \in \bk (U_i)}{\text{holim}} \; F(V) \ar[d]^-{\sim} \\
 \underset{V \in \bkb (\cup_i U_i)}{\text{holim}} \; F(V) \ar[rr]_-{\sim}  &  &  \underset{i}{\text{holim}} \; \underset{V \in \bkb (U_i)}{\text{holim}} \; F(V). }
\]
\begin{enumerate}
\item[$\bullet$] Both vertical maps come from (\ref{good_eq}), and therefore are weak equivalences by Lemma~\ref{gd_lem}. 
\item[$\bullet$] The bottom horizontal map is a weak equivalence by the following reason. Consider the data from Example~\ref{intcf_expl1}, and set $\mcald_i := \bkb (U_i)$. Then it is straightforward to see  that 
\begin{eqnarray} \label{int_eq}
\int_{\mcalc} \mcalf = \mcalc \times \left(\bigcup_i \mcald_i\right) = \bkb (\cup_i U_i). 
\end{eqnarray}
The first equality is obvious, while the second one comes from the definition of $\bkb (-)$ (see (\ref{bkb_eq})). Furthermore the canonical map 
\[
\underset{(i, V) \in \int_{\mcalc} \mcalf}{\text{holim}} \; F(V) \lra \underset{i \in \mcalc}{\text{holim}} \; \underset{V \in \mcald_i}{\text{holim}} \; F(V)
\]
is a weak equivalence by Theorem~\ref{cha_sch_thm}. But, by (\ref{int_eq}),  this latter map is nothing but the map we are interested in. 
\end{enumerate}
Hence the top horizontal map is a weak equivalence, and this completes the proof. 
\end{proof}

\section{Polynomial cofunctors}  \label{poly_section}

The goal of this section is to prove Theorem~\ref{main_thm} announced in the introduction.  We will need three preparatory lemmas: Lemma~\ref{cofinal_lem}, Lemma~\ref{poly_lem}, and Lemma~\ref{charac_lem}. The two latter ones are important themselves. 

Let us begin with the following definition. 

\begin{defn} \label{poly_defn}
A cofunctor $F \colon \om \lra \mcalm$ is called \emph{polynomial of degree $\leq k$} if for every $U \in \om$ and pairwise disjoint closed subsets $A_0, \cdots, A_k$ of $U$, the canonical map 
\[
F(U) \lra \underset{S \neq \emptyset}{\text{holim}} \; F(U \backslash \cup_{i \in S} A_i)
\] 
is a weak equivalence. Here $S \neq \emptyset$ runs over the power set of $\{0, \cdots, k\}$. 
\end{defn}

\begin{lem} \label{cofinal_lem}
Consider the data from Example~\ref{intcf_expl2}. Then the functor  $\theta \colon \int_{\mcalc} \mcalf \lra \Omega_k(U)$, defined as $\theta (S, V) = V$, is homotopy right cofinal (see Definition~\ref{cofinal_defn}).   
\end{lem}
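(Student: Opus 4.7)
My plan is to show that for every $W \in \Omega_k(U)$ the under category $W \downarrow \theta$ admits an initial object, whence its nerve is contractible and $\theta$ is homotopy right cofinal by Definition~\ref{cofinal_defn}.

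First, using the description of $\int_{\mcalc} \mcalf$ recorded in Example~\ref{intcf_expl2}, I will unpack the data. An object of $W \downarrow \theta$ is a triple $((S, V), f)$ with $\emptyset \neq S \subseteq \{0, \ldots, k\}$, with $V \in \Omega_k(U(S))$ (so $V$ is a disjoint union of at most $k$ elements of $\Omega$ satisfying $V \subseteq U \setminus \bigcup_{i \in S} A_i$), and with $f$ the inclusion $W \subseteq V$. A morphism $((S, V), f) \to ((S', V'), f')$ exists in $W \downarrow \theta$ (and is then automatically unique, the relevant triangle commuting by posetality) if and only if $S' \subseteq S$ and $V \subseteq V'$.

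Next I will introduce the candidate initial object. Set
\[
T(W) := \{\, i \in \{0, \ldots, k\} \mid W \cap A_i = \emptyset \,\}.
\]
The key combinatorial point to check first is that $T(W) \neq \emptyset$: since $W$ is a disjoint union of at most $k$ elements of $\Omega$, and each element of $\Omega$ meets at most one of the $A_i$ by the very definition of $\Omega$ in Example~\ref{intcf_expl2}, at most $k$ of the $k+1$ sets $A_0, \ldots, A_k$ can be met by $W$. Consequently $(T(W), W)$ is a valid object of $\int_{\mcalc} \mcalf$ (the containment $W \subseteq U \setminus \bigcup_{i \in T(W)} A_i$ is immediate from the definition of $T(W)$), and together with the identity inclusion $W \subseteq W$ it yields an object of $W \downarrow \theta$.

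Finally, I will verify initiality. Given any other object $((S, V), W \subseteq V)$ of $W \downarrow \theta$, the constraint $V \cap A_i = \emptyset$ for $i \in S$ combined with the inclusion $W \subseteq V$ forces $W \cap A_i = \emptyset$ for $i \in S$, i.e.\ $S \subseteq T(W)$; together with $W \subseteq V$ this produces the required (unique) morphism $(T(W), W) \to (S, V)$ in $\int_{\mcalc} \mcalf$, hence the unique morphism out of $((T(W), W), \textrm{id})$ in $W \downarrow \theta$. No serious obstacle is expected in this proof; the only substantive step is the pigeonhole observation that $T(W)$ is non-empty, which is precisely where the two defining hypotheses on $\Omega$ (each ball meets at most one $A_i$) and on $\Omega_k(U)$ (at most $k$ disjoint such balls) combine.
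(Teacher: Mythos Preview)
Your proof is correct and follows essentially the same approach as the paper: both arguments show that the under category $W \downarrow \theta$ has an initial object $((T(W), W), \mathrm{id})$ with $T(W) = \{i \mid W \cap A_i = \emptyset\}$, invoke the pigeonhole principle to ensure $T(W) \neq \emptyset$, and verify initiality via the inclusion $S \subseteq T(W)$ forced by $W \subseteq V$.
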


\begin{proof}
First of all, let us consider the notation ($\mcalc, U(S), \Omega, \Omega_k(U), \mcalf$) introduced in Example~\ref{intcf_expl2}. One has the following properties. 
\begin{enumerate}
\item[(a)] $\mcalf(S \cup T) = \mcalf(S) \cap \mcalf(T)$ for any $S, T \in \mcalc$;
\item[(b)] for any $X \in \Omega_k(U)$ there exists $j \in \{0, \cdots, k\}$ such that $X \cap A_j = \emptyset$.
\end{enumerate}
The first property follows directly from the definitions. The second comes from the following three facts: (i) By definition, each element of $\Omega$ intersects at most one of the $A_i$'s. (ii) $X$ is the disjoint union of at most $k$ elements from $\Omega$. (iii) The cardinality of the set $\{A_0, \cdots, A_k\}$ is $k+1$, which is greater than the number of components of $X$.  This property is nothing but the pigeonhole principle. 

Now let $V \in \Omega_k(U)$. We have to prove that the under category (see Definition~\ref{uc_defn}) $V \downarrow \theta$ is contractible. It suffices to show that it admits an initial object.  Consider the pair $(S, V)$ where 
\[
S = \left\{i \in \{0, \cdots, k\} |  \  V \cap A_i = \emptyset \right\}.
\]
Certainly $S \neq \emptyset$ by the property (b). So $S$ is an object of $\mcalc$. Moreover one can see that $V \in \cap_{i \in S} \mcalf(\{i\})$. This amounts to saying that $V \in \mcalf(S)$ since $\cap_{i \in S} \mcalf(\{i\}) = \mcalf(\cup_{i \in S} \{i\})$ by (a). So $(S, V) \in \int_{\mcalc} \mcalf$. Hence  the pair $((S, V), id_V)$ is an object of $V \downarrow \theta$. We claim that this latter object is an initial object of $V \downarrow \theta$.  To prove the claim, let $((T, W), V \hra W)$ be another object of  $V \downarrow \theta$. Since $V \subseteq W$, it follows that $\{i | \  V \cap A_i \neq \emptyset \}$ is a subset of $\{i | \ W \cap A_i \neq \emptyset\}$. This implies that 
\[
\{i | W \cap A_i  =  \emptyset\} \subseteq \{i | V \cap A_i  =  \emptyset\} = S. 
\]
Furthermore, $T$ is a subset of $\{i | W \cap A_i  =  \emptyset\} $ since $W \in \mcalf(T) = \Omega_k(U \backslash (\cup_{i \in T} A_i))$. So $T \subseteq S$, and therefore there is a unique morphism from $((S, V), id_V)$ to $((T, W), V \hra W)$ in the under category $V \downarrow \theta$. This completes the proof.   
\end{proof}

\begin{lem}  \label{poly_lem}
Let $\mcalm$ be a simplicial model category.  
\begin{enumerate}
\item[(i)] Let $\mcalo$ and $\okm$ as in Example~\ref{fso_expl}.   Let $F \colon \okm \lra \mcalm$ be an isotopy cofunctor (see Definition~\ref{isotopy_cof_defn}). Then the cofunctor $F^{!}_{\mcalo} \colon  \om \lra \mcalm$ (see Example~\ref{fso_expl}) is polynomial of degree $\leq k$. 
\item[(ii)] Let $\mcalb$ and $\mcalb_k(M)$ as in Definition~\ref{fsb_defn}.  Let $F \colon \bkm \lra \mcalm$ be an isotopy cofunctor. Then the cofunctor $F^{!}_{\mcalb} \colon \om \lra \mcalm$ (see Definition~\ref{fsb_defn}) is polynomial of degree $\leq k$. 
\end{enumerate}
\end{lem}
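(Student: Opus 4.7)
The plan is to prove part (ii); part (i) then follows as the special case $\mcalb = \mcalo$. Fix $U \in \om$ and pairwise disjoint closed subsets $A_0, \ldots, A_k$ of $U$, and write $U(S) := U \setminus \bigcup_{i \in S} A_i$ for $\emptyset \neq S \subseteq \{0, \ldots, k\}$. One must show that the canonical map $\fsb(U) \lra \underset{\emptyset \neq S}{\text{holim}}\, \fsb(U(S))$ is a weak equivalence.

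First I would refine $\mcalb$ to a basis adapted to the $A_i$'s by setting $\Omega := \{B \in \mcalb : B \subseteq U \text{ and } B \text{ intersects at most one } A_i\}$. Using that $\mcalb$ is a basis and the $A_i$'s are pairwise disjoint closed subsets of $U$, one checks that $\Omega$ is a good basis for the topology of $U$; likewise, for each nonempty $S$, the restriction $\{B \in \Omega : B \subseteq U(S)\}$ is a good basis for $U(S)$. Since $\Omega$ is contained in $\mcalb$, applying Theorem~\ref{sos_thm} to the manifold $U$ (and likewise to each $U(S)$) yields natural weak equivalences $\fsb(U) \srel F^{!}_{\Omega}(U)$ and $\fsb(U(S)) \srel F^{!}_{\Omega}(U(S))$, where $F^{!}_{\Omega}(V) := \underset{W \in \Omega_k(V)}{\text{holim}}\, F(W)$. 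By naturality these fit into a square reducing the problem to showing that the canonical map $F^{!}_{\Omega}(U) \lra \underset{\emptyset \neq S}{\text{holim}}\, F^{!}_{\Omega}(U(S))$ is a weak equivalence.

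To handle the refined cofunctor, I would invoke the Grothendieck setup of Example~\ref{intcf_expl2}, with $\mcalc$ the poset of nonempty subsets of $\{0,\ldots,k\}$ and $\mcalf(S) = \Omega_k(U(S))$. Theorem~\ref{cha_sch_thm} supplies a weak equivalence
\[
\underset{(S, V) \in \int_{\mcalc} \mcalf}{\text{holim}}\, F(V) \srel \underset{\emptyset \neq S}{\text{holim}}\, F^{!}_{\Omega}(U(S)),
\]
while Lemma~\ref{cofinal_lem} combined with Theorem~\ref{htpy_cofinal_thm}, applied to the cofunctor $F$ and to the homotopy right cofinal functor $\theta \colon \int_{\mcalc} \mcalf \lra \Omega_k(U)$, $(S, V) \mapsto V$, furnishes a weak equivalence $F^{!}_{\Omega}(U) = \underset{V \in \Omega_k(U)}{\text{holim}}\, F(V) \srel \underset{(S, V)}{\text{holim}}\, F(V)$. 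Composing these exhibits the desired map as a weak equivalence.

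The main obstacle is bookkeeping: one must verify that all these weak equivalences fit into a commutative diagram whose composite is indeed the canonical map appearing in Definition~\ref{poly_defn}. This amounts to checking, using the naturality statements in Proposition~\ref{induced_holim_prop}, Theorem~\ref{sos_thm}, and Theorem~\ref{cha_sch_thm}, that each intermediate map is compatible with the projections onto the factors indexed by $S$.
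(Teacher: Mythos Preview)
Your argument is correct and in fact more streamlined than the paper's. The paper proceeds in the opposite order: it first proves~(i) using exactly the Grothendieck-construction/cofinality argument you describe (with $\Omega$ built from the maximal basis $\mcalo$ rather than from $\mcalb$), and then deduces~(ii) by a detour---setting $G := \fsb|\okm$, invoking Theorem~\ref{good_thm} to know $G$ is an isotopy cofunctor, applying part~(i) to $G$, and then using Theorem~\ref{sos_thm} and Proposition~\ref{fsrik_prop} to identify $\gso$ with $\fsb$. Your route avoids this detour (and in particular the appeal to Theorem~\ref{good_thm}) by building $\Omega$ directly inside $\mcalb$ and running the cofinality argument there. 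The only small wrinkle is that Lemma~\ref{cofinal_lem} and Example~\ref{intcf_expl2} are stated for the $\Omega$ coming from $\mcalo$; you should remark that their proofs go through verbatim for any good basis, since they use only the pigeonhole property that each element of $\Omega$ meets at most one $A_i$.
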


\begin{proof}
We begin with the first part. First let us consider again the notation ($\mcalc, U(S), \Omega, \Omega_k(U), \mcalf$) introduced in Example~\ref{intcf_expl2}.  We will first show that the canonical map 
\[
\Phi_{\Omega} \colon \underset{V \in \Omega_k(U)}{\text{holim}} \; F(V) \lra  \underset{S \in \mcalc}{\text{holim}}\; \underset{V \in \mcalf(S)}{\text{holim}} \; F(V)
\]
is a weak equivalence. Next, by using the fact that $\Omega$ is another basis (for the topology of $U$) contained in $\mcalo|U$, we will deduce that the canonical map 
\[
\Phi_{\mcalo} \colon \underset{V \in \ok(U)}{\text{holim}} \; F(V) \lra  \underset{S \in \mcalc}{\text{holim}}\; \underset{V \in \ok(U(S))}{\text{holim}} \; F(V)
\]
is also a weak equivalence.

One can see that the map $\Phi_{\Omega}$ factors through $\underset{(S, V) \in \int_{\mcalc} \mcalf}{\text{holim}} \; F(V)$. That is, there is a commutative triangle
\[
\xymatrix{ \underset{V \in \Omega_k(U)}{\text{holim}} \; F(V) \ar[rr]^-{\Phi_{\Omega}}  \ar[rd]_-{\alpha}  &     &   \underset{S \in \mcalc}{\text{holim}}\; \underset{V \in \mcalf(S)}{\text{holim}} \; F(V) \\
                             &  \underset{(S, V) \in \int_{\mcalc} \mcalf}{\text{holim}} \; F(V) , \ar[ru]_-{\beta}   }
\]
where 
\begin{enumerate}
\item[$\bullet$] $\alpha$ is nothing but $[\theta; F]$ (see (\ref{thetax})), where $\theta \colon \intcf \lra \Omega_k(U)$ is the map from Lemma~\ref{cofinal_lem}. 
\item[$\bullet$] $\beta$ is the canonical map from  Theorem~\ref{cha_sch_thm}. 
\end{enumerate}
Since  $\alpha$ is a weak equivalence by Lemma~\ref{cofinal_lem} and Theorem~\ref{htpy_cofinal_thm}, and since $\beta$ is a weak equivalence by Theorem~\ref{cha_sch_thm}, it follows that $\Phi_{\Omega}$ is  a weak equivalence as well.

Now consider the following commutative square induced by the inclusion $\Omega_k(U) \hra \ok(U)$.  
\[
\xymatrix{ \underset{V \in \Omega_k(U)}{\text{holim}} \; F(V) \ar[rr]^-{\Phi_{\Omega}}_-{\sim}  &   &  \underset{S \in \mcalc}{\text{holim}}\; \underset{V \in \mcalf(S)}{\text{holim}} \; F(V)  \\
 \underset{V \in \mcalo_k(U)}{\text{holim}} \; F(V) \ar[u]^-{\sim} \ar[rr]_-{\Phi_{\mcalo}}  &      &     \underset{S \in \mcalc}{\text{holim}}\; \underset{V \in \ok(U(S))}{\text{holim}} \; F(V) \ar[u]_-{\sim} }
\] 
Since the lefthand vertical map is a weak equivalence by Theorem~\ref{sos_thm}, and since the righthand vertical map is also a weak equivalence by Theorems~\ref{sos_thm}, \ref{fib_cofib_thm} (remember that $U(S) := U\backslash \cup_{i \in S}A_i$ and $\mcalf(S) := \Omega_k(U(S))$), it follows that $\Phi_{\mcalo}$ is a weak equivalence as well.  And this proves part (i). 

Now we prove part (ii). Let $F \colon \bkm \lra \mcalm$ be an isotopy cofunctor. We have to show that $\fsb$ is polynomial of degree $\leq k$. First, consider the cofunctor $G \colon \okm \lra \mcalm$ defined as $G(U) := \fsb |  \okm$, the restriction of $\fsb$ to $\okm$.  Certainly $G$ is an isotopy cofunctor since $\fsb$ is good by Theorem~\ref{good_thm}.  This implies by the first part that the cofunctor $\gso \colon \om \lra \mcalm$ is polynomial of degree $\leq k$. Moreover the canonical map $\gso \lra (G | \bk(M))^{!}_{\mcalb}$ is a weak equivalence by Theorem~\ref{sos_thm}. So $(G | \bk(M))^{!}_{\mcalb}$ is also polynomial of degree $\leq k$. Now, since the canonical map $F \lra G | \bk(M)$ is a weak equivalence by Proposition~\ref{fsrik_prop}, it follows that the induced map $\fsb \lra (G | \bk(M))^{!}_{\mcalb}$ is a weak equivalence as well. And therefore $\fsb$ is polynomial of degree $\leq k$. This proves the lemma. 
\end{proof}

\begin{lem} \label{charac_lem}
Let $\mcalm$ be a simplicial model category.  
\begin{enumerate}
\item[(i)] Let $F, G \colon \om \lra \mcalm$ be good and polynomial cofunctors of degree $\leq k$. Let $\eta \colon F \lra G$ be a natural transformation such that for any $U \in \okm$ the component $\eta[U] \colon F(U) \lra G(U)$ is a weak equivalence. Then for any $U \in \om$ the map $\eta[U]$ is a weak equivalence. 
\item[(ii)] Let $F, G \colon \om \lra \mcalm$ be good and polynomial cofunctors of degree $\leq k$.  Let $\mcalb$ and $\bkm$ as in Definition~\ref{fsb_defn}.  Consider a natural transformation  $\eta \colon F \lra G$ such that for any $U \in \bkm$ the component $\eta[U] \colon F(U) \lra G(U)$ is a weak equivalence. Then for any $U \in \om$ the map $\eta[U]$ is a weak equivalence. 
\end{enumerate}
\end{lem}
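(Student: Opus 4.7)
The plan is to deduce both parts simultaneously from Theorem~\ref{main_thm}. Since $\mcalo$ is itself a good basis for the topology of $M$ (Example~\ref{fso_expl}) and $\bk(M) = \okm$ in that case, part (i) is the special case of part (ii) obtained by setting $\mcalb := \mcalo$. I would therefore concentrate on proving part (ii), after which (i) falls out for free.

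First, I would invoke Theorem~\ref{main_thm} applied to each of $F$ and $G$. The hypothesis that both cofunctors are good and polynomial of degree $\leq k$ yields the canonical objectwise weak equivalences
\[
F \stackrel{\sim}{\lra} (F|\bkm)^{!} \qquad \text{and} \qquad G \stackrel{\sim}{\lra} (G|\bkm)^{!},
\]
and simultaneously guarantees that the restrictions $F|\bkm$ and $G|\bkm$ are isotopy cofunctors, in particular objectwise fibrant.

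Next, for each $U \in \om$ I would consider the naturality square
\[
\xymatrix{F(U) \ar[rr]^-{\eta[U]} \ar[d]_-{\sim} & & G(U) \ar[d]^-{\sim} \\ (F|\bkm)^{!}(U) \ar[rr]_-{\widetilde{\eta}[U]} & & (G|\bkm)^{!}(U),}
\]
whose vertical maps are the weak equivalences just produced, and whose bottom horizontal map $\widetilde{\eta}[U]$ is obtained by applying the homotopy limit functor $\underset{V \in \bk(U)}{\text{holim}}(-)$ to the restriction of $\eta$ to $\bk(U)$. Since $\bk(U) \subseteq \bkm$, the hypothesis on $\eta$ says that $\eta[V] \colon F(V) \lra G(V)$ is a weak equivalence for every $V \in \bk(U)$. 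Combined with the objectwise fibrancy of $F|\bk(U)$ and $G|\bk(U)$, the homotopy invariance of homotopy limits (Theorem~\ref{fib_cofib_thm}(ii)) forces $\widetilde{\eta}[U]$ to be a weak equivalence.

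The two-out-of-three axiom applied to the square then yields that $\eta[U]$ itself is a weak equivalence for every $U \in \om$, which is the desired conclusion. I do not anticipate any serious obstacle here beyond the bookkeeping of fibrancy hypotheses needed to invoke Theorem~\ref{fib_cofib_thm}; this is precisely the point where the extra objectwise-fibrant clause built into our definition of a good cofunctor (Definition~\ref{good_defn}) earns its keep.
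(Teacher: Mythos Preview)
Your argument is circular: you invoke Theorem~\ref{main_thm} to prove Lemma~\ref{charac_lem}, but in the paper the proof of Theorem~\ref{main_thm} relies on Lemma~\ref{charac_lem}. Specifically, in the forward direction of Theorem~\ref{main_thm} one shows that the canonical map $\eta \colon F \lra (F|\bkm)^{!}$ is a weak equivalence on objects of $\bkm$, and then appeals to Lemma~\ref{charac_lem}(ii) to conclude it is a weak equivalence on all of $\om$. So the weak equivalences $F \stackrel{\sim}{\lra} (F|\bkm)^{!}$ that you place in the vertical arrows of your square are not yet available at this point.

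The paper avoids this by proving part (i) independently of Theorem~\ref{main_thm}: it simply follows Weiss's original argument for \cite[Theorem~5.1]{wei99}, which is an induction on a handle decomposition (or exhaustion) of $U$ using goodness and the polynomial condition directly, not via any Kan extension formula. Part (ii) is then reduced to part (i) by an elementary observation: given $U \in \okm$, one can find $V \in \bkm$ with $V \hra U$ an isotopy equivalence (because $\mcalb$ is a basis), and the square
\[
\xymatrix{F(U) \ar[r]^-{\sim} \ar[d]_-{\eta[U]} & F(V) \ar[d]^-{\eta[V]}_-{\sim} \\ G(U) \ar[r]_-{\sim} & G(V)}
\]
together with two-out-of-three gives $\eta[U]$ a weak equivalence for all $U \in \okm$; then part (i) finishes. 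To repair your proof you would need to supply an independent argument for part (i) along Weiss's lines.
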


\begin{proof}
The first part can be proved by following exactly the same steps as those of the proof of Theorem 5.1 from \cite{wei99}. Now we prove the  second part. Let $U \in \okm$. Since $\mcalb$ is a basis for the topology of $M$, there exists $V \in \bkm$ contained in $U$ and such that the inclusion $V \hra U$ is an isotopy equivalence.  Applying $\eta$ to $V \hra U$, we get the following commutative square. 
\[
\xymatrix{F(U) \ar[r]^-{\sim} \ar[d]_-{\eta[U]}  &  F(V) \ar[d]^-{\sim}_-{\eta[V]}  \\
            G(U)  \ar[r]_-{\sim}   &  G(V).}
\]
The top and the bottom maps are weak equivalences since $F$ and $G$ are good by hypothesis. The righthand vertical map is a weak equivalence  by assumption. So the lefthand vertical map is also a weak equivalence. Hence $\eta[U]$ is a weak equivalence for every $U \in \okm$. Now the desired result follows from the first part. 
\end{proof}

We are now ready to prove the main result of the paper. 

\begin{proof}[Proof of Theorem~\ref{main_thm}]
Assume that $F \colon \om \lra \mcalm$ is good and polynomial of degree $\leq k$. Define $G$ to be the restriction of $F$ to $\bkm$. That is, $G:= F|\bk(M)$. Since $F$ is good, and then in particular an isotopy cofunctor, it follows that $G$ is an isotopy cofunctor as well. Now we want to show that the canonical map $\eta \colon F \lra G^{!}$ is a weak equivalence. Let $U \in \om$. One can rewrite $\eta[U] \colon F(U) \lra G^{!}(U)$ as the composition
\[
\eta[U] \colon \xymatrix{ F(U) \ar[r]^-{f}    &  \underset{V \in \mcalo(U)}{\text{holim}} \; F(V) \ar[r]^-{g}  &  G^{!} (U), }
\] 
where $f$ comes from the fact that $U$ is the terminal object of $\mcalo(U)$. The same fact allows us to conclude that $f$ is a weak equivalence. The map $g$ is nothing but $[\theta; F]$, where $\theta \colon \bk(U) \lra \mcalo(U)$ is just the inclusion functor. Now assume  $U \in \bk(M)$. Then for every $V \in \mcalo(U)$ the under category $V \downarrow \theta$ is contractible since it has a terminal object, namely $(U, V \hra U)$. Therefore, by Theorem~\ref{htpy_cofinal_thm}, the map $g$ is a weak equivalence. This implies that $\eta[U]$ is a weak equivalence when $U \in \bkm$.  So, by Lemma~\ref{charac_lem}, $\eta[U]$ is also a weak equivalence for any $U \in \om$. 

Conversely, assume that $G:=F|\bk(M)$ is an isotopy cofunctor and that the canonical map $\eta \colon F \stackrel{\sim}{\lra} G^{!}$ be a  weak equivalence. By Theorem~\ref{good_thm} and Lemma~\ref{poly_lem} the cofunctor $G^{!}$ is good and polynomial of degree $\leq k$, which  proves the converse. We thus obtained the desired result.   
\end{proof}

\section{Homogeneous cofunctors}  \label{hc_section}

The goal of this section is to prove Theorem~\ref{main2_thm} (announced in the introduction), which roughly says that the category of homogeneous cofunctors $\om \lra \mcalm$ of degree $k$ is weakly equivalent to the category of 
linear cofunctors $\mcalo(F_k(M)) \lra \mcalm$. We begin with three definitions. Next we prove Lemma~\ref{homo_lem}, which is the key lemma here, and which roughly states that homogeneous cofunctors of degree $k$ are determined by their values on open subsets diffeomorphic to the disjoint union of exactly $k$ balls. Note that this lemma is also a useful result in its own right, and its proof is based on the results we obtained in Section~\ref{sos_good_section} and Section~\ref{poly_section}.

\begin{defn}
Let $\mcalm$ be a simplicial model category, and let $F \colon \om \lra \mcalm$ be a  cofunctor. The \emph{$k$th polynomial approximation} to $F$, denoted $T_kF$, is the cofunctor $T_k F \colon \om \lra \mcalm$ defined as 
\[
T_k F (U) :=  \underset{V \in \ok (U)}{\text{holim}} F(V).
\] 
\end{defn}

\begin{defn} \label{hc_defn}
Let $\mcalm$ be a simplicial model category that has a terminal object denoted $0$. 
\begin{enumerate}
\item[(i)] A cofunctor $F \colon \om \lra \mcalm$ is called \emph{homogeneous of degree} $k$ if it satisfies the following three conditions:
\begin{enumerate}
\item[(a)] $F$ is a good cofunctor (see Definition~\ref{good_defn});
\item[(b)] $F$ is polynomial of degree $\leq k$ (see Definition~\ref{poly_defn}); 
\item[(c)] The unique map $T_{k-1} F (U) \lra 0$ is a weak equivalence for every $U \in \om$. 
\end{enumerate} 
\item[(ii)] A \emph{linear cofunctor} is a homogeneous cofunctor of degree $1$. 
\end{enumerate}
\end{defn}

The category of homogeneous cofunctors of degree $k$ and natural transformations will be denoted by $\fko$.

\begin{defn} \label{we_defn}
Let $\mcalc$ and $\mcald$ be categories both equipped with a class of maps called weak equivalences. 
\begin{enumerate}
\item[(i)] We say that two functors  $F, G \colon \mcalc \lra \mcald$ are  \emph{weakly equivalent}, and we denote $F \simeq G$,  if they are connected by a zigzag of objectwise weak equivalences. 
\item[(ii)] A functor $F \colon \mcalc \lra \mcald$ is said to be a \emph{weak equivalence} if it satisfies the following two conditions.
   \begin{enumerate}
     \item[(a)] $F$ preserves weak equivalences.
		\item[(b)] There is a functor $G \colon \mcald \lra \mcalc$ such that $FG$ and $GF$ are both weakly equivalent to the identity. The functor $G$ is also required to preserve weak equivalences. 
		\end{enumerate} 
\item[(iii)] We say that $\mcalc$ is weakly equivalent  to $\mcald$, and we denote $\mcalc \simeq \mcald$, if there exists a zigzag of weak equivalences between $\mcalc$ and $\mcald$. 
\end{enumerate}
\end{defn}

\begin{rmk}
By Definition~\ref{we_defn}, it follows that if two categories $\mcalc$ and $\mcald$ are weakly equivalent, then their localizations with respect to weak equivalences are equivalent in the classical sense. Note that no model structure is required on $\mcalc$ and $\mcald$. So our notion of weak equivalences between categories is not comparable, in general, with the well known notion of Quillen equivalence.  
\end{rmk}

As mentioned earlier the following lemma is the key ingredient in proving Theorem~\ref{main2_thm}. 

\begin{lem} \label{homo_lem}
Let $\mcalb$ be a good basis (see Definition~\ref{gb_defn}) for the topology of $M$. Let $\bku (M) \subseteq \om$ denote the  subposet whose objects are disjoint unions  of exactly $k$ elements from $\mcalb$, and whose morphisms are isotopy equivalences.  Let $\mcalm$ be a simplicial model category.  Assume that $\mcalm$ has a zero object $0$ (that is, an object which is both terminal an initial). 
\begin{enumerate}
	\item[(i)]  Then the category $\fko$ of homogeneous cofunctors of degree $k$ (see Definition~\ref{hc_defn}) is weakly equivalent (in the sense of Definition~\ref{we_defn}) to the category $\mcalf (\bku (M); \mcalm)$ of isotopy cofunctors $\bku (M) \lra \mcalm$ (see Definition~\ref{isotopy_cof_defn}). That is,
	\[
	\fko \simeq \mcalf(\bku(M); \mcalm). 
	\]
	\item[(ii)] For $A \in \mcalm$ we have the weak equivalence 
	\[
	\mcalf_{kA}(\om; \mcalm) \simeq \mcalf_A(\bku(M); \mcalm),
	\]
	where $\mcalf_{kA}(\om; \mcalm)$ is the category from Theorem~\ref{main2_thm} and $\mcalf_A(\bku(M); \mcalm)$ denotes the category of isotopy cofunctors $F \colon \bku (M) \lra \mcalm$ such that $F(U) \simeq A$ for every $U \in \bku (M)$.  
\end{enumerate}
\end{lem}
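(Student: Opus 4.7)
The plan is to construct mutually weakly-inverse functors $R$ and $E$ in the sense of Definition~\ref{we_defn}. Let $R \colon \fko \to \mcalf(\bku(M); \mcalm)$ be restriction, $R(F) = F|\bku(M)$; this lands in isotopy cofunctors because any homogeneous $F$ is, in particular, an isotopy cofunctor on $\om$. In the other direction, define $E \colon \mcalf(\bku(M); \mcalm) \to \fko$ by first extending $G \colon \bku(M) \to \mcalm$ to a cofunctor $\widetilde{G} \colon \bkm \to \mcalm$: set $\widetilde{G}(V) = G(V)$ on objects of $\bku(M)$ and $\widetilde{G}(V) = 0$ on objects of $\mcalb_{k-1}(M)$, and define morphism-values using $G$ on isotopy-equivalence inclusions within $\bku(M)$ and factoring through $0$ (using that $0$ is both initial and terminal) for every remaining inclusion in $\bkm$. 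Set $E(G) := \widetilde{G}^{\,!}_{\mcalb}$.

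One verifies that $\widetilde{G}$ is a well-defined isotopy cofunctor on $\bkm$ using the basic fact that an isotopy equivalence between disjoint unions of open balls must preserve the number of components, so that no isotopy equivalence of $\bkm$ crosses between the two strata. Then $E(G) \in \fko$: goodness by Theorem~\ref{good_thm}, polynomial of degree $\leq k$ by Lemma~\ref{poly_lem}, and $T_{k-1}(E(G)) \simeq 0$ because for any $V$ with fewer than $k$ components, every $W \in \bk(V)$ also has fewer than $k$ components, so $\widetilde{G}(W) = 0$ and $\widetilde{G}^{\,!}_{\mcalb}(V) \simeq 0$ by Theorem~\ref{fib_cofib_thm}. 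Both $R$ and $E$ clearly preserve weak equivalences.

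For the composite $RE \simeq \id$: given $G$, the natural weak equivalence of Proposition~\ref{fsrik_prop}(i) between $\widetilde{G}$ and $\widetilde{G}^{\,!}_{\mcalb}|\bkm$ restricts to a natural weak equivalence $G \stackrel{\sim}{\lra} RE(G)$. For $ER \simeq \id$: given $F \in \fko$, Theorem~\ref{main_thm} supplies the weak equivalence $F \simeq (F|\bkm)^{\,!}_{\mcalb}$, so it remains to connect $(F|\bkm)^{\,!}_{\mcalb}$ and $\widetilde{F|\bku(M)}^{\,!}_{\mcalb}$ by a zigzag of natural weak equivalences. For this, build a zigzag of objectwise weak equivalences between the cofunctors $F|\bkm$ and $\widetilde{F|\bku(M)}$ on $\bkm$, then apply $(-)^{\,!}_{\mcalb}$ and Theorem~\ref{fib_cofib_thm}. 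These two cofunctors agree on $\bku(M)$, and on $\mcalb_{k-1}(M)$ each takes values weakly equivalent to $0$: strictly for $\widetilde{F|\bku(M)}$, and by homogeneity of $F$ for $F|\bkm$, since any $V \in \mcalb_{k-1}(M)$ is terminal in $\mcalo_{k-1}(V)$ and thus $F(V) \simeq T_{k-1}F(V) \simeq 0$. Part~(ii) is then immediate: the construction of $E$ visibly preserves the condition that the value on an object of $\bku(M)$ is weakly equivalent to a fixed $A$, while the restriction $R$ tautologically does so.

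The main obstacle is assembling the pointwise comparison on $\bkm$ into a genuine zigzag of strict natural transformations. Indeed, for a non-isotopy-equivalence inclusion $V \subseteq W$ inside $\bku(M)$ (where two components of $V$ sit in one component of $W$), the cofunctor $F|\bkm$ assigns $F$'s genuine restriction map while $\widetilde{F|\bku(M)}$ assigns a map factoring strictly through $0$; these agree only up to the weak equivalence $F(V') \simeq 0$ at an intermediate object $V' \in \mcalb_{k-1}(M)$ with $V \subseteq V' \subseteq W$. The cleanest resolution is to bypass the need for strict naturality of a single transformation by invoking Lemma~\ref{charac_lem}(ii): it suffices to construct, for each $U \in \bkm$, a weak equivalence between the two holim computations that is compatible under inclusions in $\bkm$, and then that comparison propagates to all of $\om$. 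Executing this cleanly will be the delicate part of the argument.
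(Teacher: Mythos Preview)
Your approach is essentially the paper's own: the paper factors through an intermediate category $\mcalf_k(\bkm;\mcalm)$ (isotopy cofunctors on $\bkm$ that are $\simeq 0$ on $\mcalb_{k-1}(M)$), with restriction/extension-by-zero $\phi_1,\psi_1$ on one side and restriction/$(-)^!_\mcalb$ $\phi_2,\psi_2$ on the other. Your $R$ and $E$ are exactly the composites $\phi_1\phi_2$ and $\psi_2\psi_1$. The verification that $E(G)$ is homogeneous, and the use of Proposition~\ref{fsrik_prop} and Theorem~\ref{main_thm} for the two triangle identities, are the same as in the paper.

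You have, however, put your finger on a real difficulty that the paper simply asserts away (it declares $\psi_1\phi_1\simeq\id$ ``clear''). The problem is not only the non-isotopy inclusions inside $\bku(M)$: already for $V\in\bku(M)$ and $W\in\mcalb_{k-1}(M)$ with $V\subseteq W$, the obvious componentwise maps $G\to\psi_1\phi_1(G)$ or $\psi_1\phi_1(G)\to G$ fail naturality, because one square asks that $G(W)\to G(V)$ agree \emph{strictly} with the composite through $0$, and $G(W)\simeq 0$ does not force that. Your proposed bypass via Lemma~\ref{charac_lem}(ii) does not actually bypass anything: that lemma takes as hypothesis a natural transformation $\eta\colon F\to G$ of cofunctors on $\om$ and upgrades a pointwise statement on $\bkm$ to all of $\om$. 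It does not manufacture $\eta$; and producing such an $\eta$ between $(F|\bkm)^!_\mcalb$ and $(\widetilde{F|\bku(M)})^!_\mcalb$ runs into exactly the same compatibility obstruction (the cone $F(U)\to\widetilde{F|\bku(M)}(V)$ over $\bk(U)$ is not coherent across those same morphisms). So as written, the $ER\simeq\id$ direction has a genuine gap, shared with the paper, and ``executing this cleanly'' will require an additional idea beyond Lemma~\ref{charac_lem}.
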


\begin{proof}
We will prove the first part; the proof of the second part is similar. The idea of the proof is to define a new category and show that it is weakly equivalent to both $\mcalf(\bku(M); \mcalm)$ and $\fko$. To define that category, let us first recall the notation $\bkm$ from Definition~\ref{fsb_defn}. Define  $\mcalf_k(\bkm; \mcalm)$ to be the category  whose objects are isotopy cofunctors $F \colon \bkm \lra \mcalm$ such that the restriction to $\mcalb_{k-1} (M)$ is weakly equivalent to the constant functor at $0$. That is,
\begin{eqnarray}  \label{wd_cond}
\text{for all $U \in \mcalb_{k-1}(M)$,} \quad F(U) \simeq 0. 
\end{eqnarray}
Now consider the following diagram
\begin{eqnarray} \label{psii_phii}
\xymatrix{\mcalf(\bku(M); \mcalm) \ar@<1ex>[r]^-{\psi_1} &  \mcalf_k(\bkm; \mcalm) \ar@<1ex>[l]^-{\phi_1} \ar@<1ex>[r]^-{\psi_2} & \fko  \ar@<1ex>[l]^-{\phi_2} }
\end{eqnarray}
where the maps are defined as follows. 
\begin{enumerate}
\item[$\bullet$] $\phi_1$ is the restriction functor. That is, $\phi_1(F) = F|\bku(M)$.
\item[$\bullet$] $\phi_2$ is also the restriction functor. To see that it is well defined, let $F \colon \om \lra \mcalm$ be an object of $\fko$. We have to check that $F$ satisfies condition (\ref{wd_cond}). So let $U \in \mcalb_{k-1}(M)$. Recalling the notation \lq\lq$[-; -]$\rq\rq{} from Proposition~\ref{induced_holim_prop}, we have the following commutative diagram
\[
\xymatrix{\underset{V \in \mcalo_{k-1}(U)}{\text{holim}} \; F(V)  \ar[rr]^-{\sim}  \ar[d]_-{[\theta; F]}^-{\sim}  &  & 0  \\
              \underset{V \in \mcalb_{k-1}(U)}{\text{holim}} \; F(V)  \ar[rru]^-{\sim}  &  & F(U). \ar[ll]^-{\sim} \ar[u]    }
\]
Here $\theta \colon \mcalb_{k-1}(U) \hra  \mcalo_{k-1}(U)$ is the inclusion functor. The bottom horizontal map is a weak equivalence since $U$ is the terminal object of $\mcalb_{k-1}(U)$ (see Proposition~\ref{fsrik_prop}). The top one is a weak equivalence since $F$ is homogeneous of degree $k$. Since the lefthand vertical map is also a weak equivalence (by Theorem~\ref{sos_thm}), it follows that $F(U)$ is weakly equivalent to $0$. 
\item[$\bullet$]  $\psi_1$ is defined as 
\[
\psi_1(F)(U) := \left\{ \begin{array}{cc}
                       F(U)  & \text{if $U \in \bku (M)$ }  \\
											  0   & \text{otherwise,}
                        \end{array} \right.
\]
Certainly $\psi_1(F)$ satisfies (\ref{wd_cond}) and is an isotopy cofunctor. This latter assertion comes from the fact that if $U \subseteq U'$ is an isotopy equivalence, then $U$ and $U'$  definitely have the same number of connected components. On morphisms $\psi_1$ is defined in the obvious way. 
\item[$\bullet$]  $\psi_2$ is defined as $\psi_2(F) := \fsb$ (see Definition~\ref{fsb_defn}). On morphisms $\psi_2$ is defined by the fact that the homotopy right Kan extension is functorial. By Theorem~\ref{good_thm} and Lemma~\ref{poly_lem}, it is clear that $\psi_2(F)$ is good and polynomial of degree $\leq k$. To see that $\psi_2(F)$ satisfies condition (c) from Definition~\ref{hc_defn}, let $F \colon \bkm \lra \mcalm$ be an object of $\mcalf_k(\bkm; \mcalm)$. Consider the following commutative diagram
\[
\xymatrix{\underset{V \in \mcalo_{k-1}(U)}{\text{holim}} \; \fsb (V) \ar[r]^-{[\theta; \fsb]}_-{\sim} \ar[d] &  \underset{V \in \mcalb_{k-1}(U)}{\text{holim}} \; \fsb (V) \ar@{=}[r] & \underset{V \in \mcalb_{k-1}(U)}{\text{holim}} \; \underset{W \in \bk(V)}{\text{holim}} \; F(W) \ar[lld]^-{\sim}  \\    
  0 	&   &     \underset{V \in \mcalb_{k-1}(U)}{\text{holim}} \; F(V), \ar[ll]^-{\sim} \ar[u]_-{\sim}}
\]
where the righthand vertical map is induced by the canonical map $F(V) \lra \underset{W \in \bk(V)}{\text{holim}} \; F(W)$. Since $V$ belongs to  $\mcalb_{k-1}(U)$ it follows that $V$ is the terminal object of $\bk(V)$, and therefore  this latter map is a weak equivalence (by Proposition~\ref{fsrik_prop}).  The bottom horizontal map is a weak equivalence since $F$ belongs to $\mcalf_k(\bkm; \mcalm)$, and then satisfies (\ref{wd_cond}). Regarding the map $[\theta; \fsb]$, it is a weak equivalence by Theorem~\ref{sos_thm}. All this implies that the lefthand vertical map is a weak equivalence as well. So $\fsb$ satisfies condition  (c). 
\end{enumerate}
Certainly $\phi_1, \psi_1$ and $\phi_2$ preserve weak equivalences. The functor $\psi_2$ preserves weak equivalences as well by Theorem~\ref{fib_cofib_thm} and condition (a) from Definition~\ref{isotopy_cof_defn}. Moreover, it is clear that
 $\phi_1\psi_1 =id$ and  $\psi_1 \phi_1 \simeq id$. So the category $\mcalf(\bku(M); \mcalm)$ is weakly equivalent to the category  $\mcalf_k(\bkm; \mcalm)$. Furthermore, by using Propsosition~\ref{fsrik_prop}, one can easily come to $\phi_2\psi_2 \simeq id$ and $\psi_2\phi_2 \simeq id$. So the categories $\mcalf_k(\bkm; \mcalm)$ and $\fko$ are also weakly equivalent. This proves the lemma. 
\end{proof}

We are now ready to prove Theorem~\ref{main2_thm}. 

\begin{proof}[Proof of Theorem~\ref{main2_thm}]
We will prove the first part; the proof of the second part is similar. Recall the notation $F_k(M)$, which is that of the space of unordered configuration of $k$ points in $M$. The proof of part (i) follows from the following three weak equivalences: (\ref{we1}), (\ref{we2}), and (\ref{we3}). The first one
\begin{eqnarray} \label{we1}
\fko \simeq \mcalf (\bku(M); \mcalm),
\end{eqnarray}
is nothing but Lemma~\ref{homo_lem}~-(i). The second
\begin{eqnarray} \label{we2}
\mcalf (\bku(M); \mcalm) \cong \mcalf(\mcalb'^{(1)} (F_k(M)); \mcalm),
\end{eqnarray}
is actually an isomorphism where $\mcalb'$ is the basis for the topology of $F_k(M)$ whose elements are products of exactly $k$ elements from $\mcalb$. This isomorphism comes from the fact that $\bku(M) \cong \mcalb'^{(1)} (F_k(M))$. The last weak equivalence 
\begin{eqnarray} \label{we3}
\mcalf(\mcalb'^{(1)} (F_k(M)); \mcalm) \simeq \mcalf_1 (\mcalo(F_k(M)); \mcalm),
\end{eqnarray}
is again Lemma~\ref{homo_lem}~-(i). 
\end{proof}

\section{Isotopy cofunctors in general model categories}  \label{iso_cof_section}

This section is independent of previous ones, and its goal is to prove Theorem~\ref{iso_cof_thm} (announced in the introduction), which says that the cofunctor $F^{!}=\fso$ from Example~\ref{fso_expl} is an isotopy cofunctor provided that $F \colon \okm \lra \mcalm$ is an isotopy cofunctor (here $\mcalm$ is a general model category). This result is proved in Theorem~\ref{good_thm} when $\mcalm$ is a simplicial model category. To prove Theorem~\ref{good_thm} we used several results/properties (about homotopy limits in simplicial model categories) including Theorem~\ref{fubini_thm}, Proposition~\ref{comm_prop}.  This latter result involves the notion of totalization of a cosimplicial object, which does not  make sense  in a general model category. So the method we used before do not work here anymore.   In this section we present a completely different approach, but rather lengthy, that uses only two properties  of homotopy limits (see Theorem~\ref{fib_cofib_thmg} and Theorem~\ref{htpy_cofinal_thmg}).  That approach is inspired by our work in \cite{paul_don17}. For the plan of this section, we refer the reader to the table of contents and the outline given at the introduction.  For a faster run through the section, the reader could, after reading the Introduction, jump directly  to the   beginning of Section~\ref{iso_cof_subsection} to get a better idea of the proof of  Theorem~\ref{iso_cof_thm}.

\subsection{Homotopy limits in general model categories}  \label{holim_subsectiong}

This subsection recalls some useful properties of homotopy limits in general model categories. We also recall two results (Proposition~\ref{induced_holim_propg} and Proposition~\ref{fsrik_propg}) that will be used in next subsections.

Homotopy limits and colimits in general model categories are constructed in \cite{hir03, dhks04} by W. Dwyer, P. Hirschhorn, D. Kan, and J. Smith. They use the notion of \textit{frames}  that we now recall briefly. Let $\mcalm$ be a model category, and let $X$ be an object of $\mcalm$. A \textit{cosimplicial frame} on $X$ is a  \lq\lq cofibrant replacement\rq\rq{} (in the Reedy model category of cosimplicial objects in $\mcalm$) of the constant cosimplicial object at $X$ that satisfies certain properties. A \textit{simplicial frame} on $X$ is the dual notion. For a more precise definition we refer the reader to \cite[Definition 16.6.1]{hir03}. A \textit{framing} on $\mcalm$ is a functorial cosimplicial and simplicial frame on every object of $\mcalm$. A \textit{framed model category} is   a model category endowed with a framing (see also \cite[Definition 16.6.21]{hir03}). A typical example of a framed model category is any simplicial model category as we considered in previous sections. 

\begin{rmk}
In \cite[Theorem 16.6.9]{hir03} it is proved that there exists a framing on any model category. It is also proved that two any framings are \lq\lq weakly equivalent\rq\rq{} \cite[Theorem 16.6.10]{hir03}. Throughout this section, $\mcalm$ is a model category endowed with a fixed framing.  
\end{rmk}

Using the notion of framing, one can define the homotopy limit and colimit of a diagram in $\mcalm$. We won't give that definition here since it is not important for us (the reader who is interested in that definition can find it in \cite[Definition 19.1.2 and Definition 19.1.5]{hir03}). All we need are some properties  of  that homotopy limit and colimit (see Theorem~\ref{fib_cofib_thmg} and Theorem~\ref{htpy_cofinal_thmg} below). 

\begin{thm} \cite[Theorem 19.4.2]{hir03}  \label{fib_cofib_thmg}
Let $\mcalm$ be a  model category, and let $\mcalc$ be a small category. Let $\eta \colon F \lra G$ be a map of $\mcalc$-diagrams in $\mcalm$. 
\begin{enumerate}
\item[(i)] If for every object $c$ of $\mcalc$ the component $\eta[c] \colon F(c) \lra G(c)$ is a weak equivalence of cofibrant objects, then the induced map of homotopy colimits $\text{hocolim} \; F \lra \text{hocolim} \; G$ is a weak equivalence of cofibrant objects of $\mcalm$.
\item[(ii)] If for every object $c$ of $\mcalc$ the component $\eta[c] \colon F(c) \lra G(c)$ is a weak equivalence of fibrant objects, then the induced map of homotopy limits $\text{holim} \; F \lra \text{holim} \; G$ is a weak equivalence of fibrant objects of $\mcalm$.
\end{enumerate}
\end{thm}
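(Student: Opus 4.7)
The plan is to focus on part (ii); part (i) follows by duality, applying the argument in $\mcalm^{\text{op}}$ and using that cosimplicial frames in $\mcalm$ are simplicial frames in $\mcalm^{\text{op}}$. The strategy is to present the homotopy limit as the ordinary limit of a Reedy fibrant resolution and then invoke Ken Brown's lemma for the right Quillen functor $\lim_{\Delta}$.

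First I would recall Hirschhorn's framed construction \cite[Definition 19.1.5]{hir03}: using the fixed simplicial frame, one defines a cotensor $X^K$ for any fibrant $X \in \mcalm$ and simplicial set $K$, and then
\[
\underset{\mcalc}{\text{holim}}\; F \;=\; \text{eq}\!\left(\prod_{c \in \mcalc}F(c)^{N(\mcalc\dar c)} \rightrightarrows \prod_{c \to c'} F(c')^{N(\mcalc \dar c)}\right),
\]
which coincides (up to natural isomorphism) with $\lim_{[n]\in\Delta}\pid F$, the limit of the cosimplicial replacement interpreted in the framed setting. A standard property of frames \cite[Proposition 16.5.5]{hir03} says that cotensoring a fibrant object with an arbitrary simplicial set yields a fibrant object, and sends weak equivalences between fibrant objects to weak equivalences between fibrant objects.

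Second, I would show that objectwise fibrancy of $F$ forces $\pid F$ to be Reedy fibrant in the standard Reedy model structure on $\mcalm^{\Delta}$. The argument is identical to Proposition~\ref{rf_thm}: since each codegeneracy $s^j\colon \Pi^n F \lra \Pi^{n-1}F$ is a projection off a product, the matching map $\Pi^n F \lra M^n \pid F$ is itself a projection off a product of fibrants, hence a fibration. The map $\eta\colon F \lra G$ induces $\pid \eta \colon \pid F \lra \pid G$ which is levelwise a product of weak equivalences of fibrant objects in $\mcalm$, hence a weak equivalence of fibrant objects at each level; this is exactly a Reedy weak equivalence between Reedy fibrant cosimplicial objects.

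Third, I would invoke the Reedy-vs-limit Quillen adjunction: the constant-diagram functor $\mcalm \lra \mcalm^{\Delta}$ is left Quillen (with respect to the Reedy structure on the target), so its right adjoint $\lim_{\Delta}$ is right Quillen. By Ken Brown's lemma, $\lim_{\Delta}$ sends weak equivalences between Reedy fibrant objects to weak equivalences between fibrant objects in $\mcalm$. Applying this to $\pid \eta$ yields the conclusion of (ii). The main obstacle, in my view, is rigorously justifying the two framework facts I used as black boxes: (a) the framed definition of $\text{holim}$ really agrees up to natural weak equivalence with $\lim_{\Delta}\pid(-)$ applied to an objectwise fibrant diagram, and (b) $\lim_{\Delta}$ is right Quillen for the Reedy structure. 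Both are treated carefully in Hirschhorn \cite[Chapters 15, 16, 18, 19]{hir03}, and absent that machinery one would have to reconstruct the compatibility between cosimplicial replacement, cotensors with nerves, and the Reedy model structure by hand, which is lengthy but routine.
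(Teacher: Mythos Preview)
The paper does not prove this statement at all: it is quoted verbatim as \cite[Theorem 19.4.2]{hir03} and used as a black box. So there is no ``paper's own proof'' to compare against; your proposal is filling in what the authors deliberately outsourced to Hirschhorn.

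That said, your sketch is a reasonable outline of the standard argument, and you have correctly identified the two load-bearing facts as obstacles (a) and (b). One caution: the identification $\underset{\mcalc}{\text{holim}}\,F \cong \lim_{\Delta}\Pi^{\bullet}F$ that you invoke is proved in the paper (Theorem~\ref{holim_tot_thm}) only for \emph{simplicial} model categories, where the cotensor is strict. In the general framed setting the cotensor $X^K$ built from a simplicial frame is only functorial and homotopically well-behaved up to weak equivalence, so the equalizer description and the cosimplicial-replacement description need not be literally isomorphic; one must instead compare them through a zigzag of weak equivalences, or work directly with the two-variable adjunction axioms for frames as Hirschhorn does in \cite[Chapter 19]{hir03}. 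Your Reedy/Ken Brown packaging for $\lim_{\Delta}$ is fine once (a) is in place, but getting (a) in the non-simplicial case is genuinely the bulk of the work, not a routine check.
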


\begin{thm}\cite[Theorem 19.6.7]{hir03}  \label{htpy_cofinal_thmg}
 Let $\mcalm$ be a  model category.  If $\theta \colon \mcalc \lra \mcald$ is homotopy left cofinal (respectively homotopy right cofinal) (see Definition~\ref{cofinal_defn}), then for every objectwise fibrant covariant (respectively contravariant) functor $F \colon \mcald \lra \mcalm$, the natural map $[\theta; F]$ from Proposition~\ref{induced_holim_propg} is a weak equivalence. 
\end{thm}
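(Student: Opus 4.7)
The statement is cited as \cite[Theorem 19.6.7]{hir03}, so my plan is to sketch the underlying mechanism, following Hirschhorn's approach. I would treat the contravariant case, with $\theta \colon \mcalc \lra \mcald$ homotopy right cofinal and $F \colon \mcald \lra \mcalm$ an objectwise fibrant contravariant functor; the covariant case then follows by passing to opposite categories.

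First I would unpack the definition of the homotopy limit using the fixed framing on $\mcalm$. For any objectwise fibrant contravariant $G \colon \mcalx \lra \mcalm$, Hirschhorn's construction expresses $\underset{\mcalx}{\text{holim}}\, G$, up to natural weak equivalence, as the equalizer of a pair of maps between products of cotensors of the form $G(x)^{N(\mcalx \downarrow x)}$, where the cotensor of $G(x)$ against the nerve $N(\mcalx \downarrow x)$ is formed through the simplicial frame on $G(x)$. Applied to $F$ and to $\theta^{*}F$, the natural map $[\theta; F]$ becomes the map of equalizers induced by the product projections onto the $\theta(c)$-factor together with the canonical functors $\mcalc \downarrow c \lra \mcald \downarrow \theta(c)$ on nerves.

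Next I would factor $[\theta; F]$ as a composition
\begin{equation*}
\underset{\mcald}{\text{holim}}\, F \;\longrightarrow\; H \;\longrightarrow\; \underset{\mcalc}{\text{holim}}\, \theta^{*}F,
\end{equation*}
where $H$ is the analogous equalizer built from the cotensors $F(\theta c)^{N(\mcald \downarrow \theta(c))}$ but indexed over $c \in \mcalc$. The first arrow compares two ends carrying the same $\mcald$-indexed weights, while the second arrow is induced by the canonical simplicial maps $N(\mcalc \downarrow c) \lra N(\mcald \downarrow \theta(c))$. For the second arrow, the cofinality hypothesis says that $N(d \downarrow \theta)$ is contractible for every $d \in \mcald$; a Quillen Theorem~A style argument then shows that each $N(\mcalc \downarrow c) \lra N(\mcald \downarrow \theta(c))$ is a weak homotopy equivalence of simplicial sets. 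Since $F(\theta c)$ is fibrant, the homotopy invariance of cotensors through a simplicial frame \cite[Prop.~16.5.5]{hir03} turns each such weak equivalence of simplicial sets into a weak equivalence between fibrant objects of $\mcalm$. Theorem~\ref{fib_cofib_thmg} then upgrades this levelwise weak equivalence to a weak equivalence of equalizers, so the second arrow is a weak equivalence.

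The principal obstacle will be the first arrow in the factorization above, namely justifying the Fubini-type identification in a \emph{general} model category without the totalization of a cosimplicial object, which, as noted in the paper, is the precise tool unavailable here. I would handle this by equipping $\mcalm^{\Delta}$ with its Reedy model structure, verifying that the cosimplicial replacements built from the chosen framing are Reedy fibrant, and then exploiting the compatibility of matching objects with the pullback along $\theta$ to match the two ends up to natural weak equivalence between fibrant objects. Once both arrows in the factorization are weak equivalences, the two-out-of-three axiom gives the result.
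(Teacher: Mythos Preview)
The paper does not give a proof of this theorem at all: it is stated with a bare citation to \cite[Theorem 19.6.7]{hir03} and used as a black box. So there is nothing to compare your argument to within the paper itself; you are attempting to reconstruct Hirschhorn's proof rather than the authors' proof.

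That said, your sketch is not quite Hirschhorn's argument, and the part you flag as ``the principal obstacle'' is artificially created by your factorization. Hirschhorn does not pass through an intermediate object $H$ indexed over $\mcalc$. Instead (see \cite[Proposition 19.6.6 and Theorem 19.6.7]{hir03}), he rewrites $\underset{\mcalc}{\text{holim}}\,\theta^{*}F$ as a two-variable end over $\mcald$ with weights $N(d \downarrow \theta)$, so that the comparison map $[\theta;F]$ is induced, factor by factor over $d \in \mcald$, by the map of simplicial sets $N(d \downarrow \theta) \to N(d \downarrow \mcald)$. The target $N(d \downarrow \mcald)$ always has a terminal object and is contractible; the source is contractible by the cofinality hypothesis. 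Homotopy invariance of the cotensor against fibrant $F(d)$ then gives the result directly. In particular, no Fubini-type identification, Reedy argument, or cosimplicial replacement is needed, and your ``first arrow'' disappears once you index over $\mcald$ rather than over $\mcalc$.
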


We will also need the following two propositions. The first is a generalization of Proposition~\ref{induced_holim_prop}, while the second is a generalization of Proposition~\ref{fsrik_prop}. 

\begin{prop}\cite[Proposition 19.1.8]{hir03}  \label{induced_holim_propg}
Let $\mcalm$ be a model category, and  let $\theta \colon \mcalc \lra \mcald$ be a functor between two small categories.  If $F \colon \mcald \lra \mcalm$ is an $\mcald$-diagram, then there is a canonical map 
\begin{eqnarray} \label{thetaxg}
[\theta; F] \colon \underset{\mcald}{\text{holim}}\; F \lra \underset{\mcalc}{\text{holim}}\; \theta^*F.
\end{eqnarray}
(see (\ref{theta_starx})) Furthermore, this map is natural in both variables $\theta$ and $F$ as in Proposition~\ref{induced_holim_prop}. 
\end{prop}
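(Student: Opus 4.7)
The plan is to mimic the proof of Proposition~\ref{induced_holim_prop} verbatim, reinterpreting the exponentials $Y^K$ by means of the fixed framing on $\mcalm$. Recall that a simplicial frame on $Y \in \mcalm$ furnishes, for every simplicial set $K$, an object $Y^K \in \mcalm$ that is contravariantly functorial in $K$ and covariantly functorial in $Y$; with this interpretation, the formula of Definition~\ref{holim_defn} continues to define an object of $\mcalm$ as the equalizer of a pair of maps of products, and, up to canonical isomorphism, this agrees with Hirschhorn's frame-based definition of $\underset{\mcalc}{\text{holim}}$. So the whole combinatorial skeleton of the simplicial case is available.

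First I would define a map
\[
\Psi \colon \underset{d \in \mcald}{\prod} (F(d))^{N(\mcald \downarrow d)} \lra \underset{c \in \mcalc}{\prod} (F(\theta(c)))^{N(\mcalc \downarrow c)}
\]
exactly as in (\ref{fd_fc}): for each $c \in \mcalc$, project onto the factor indexed by $\theta(c)$, then apply the map $(F(\theta(c)))^{N(\mcald \downarrow \theta(c))} \lra (F(\theta(c)))^{N(\mcalc \downarrow c)}$ induced by the canonical functor $\phi_c \colon \mcalc \downarrow c \lra \mcald \downarrow \theta(c)$, $(x,f) \mapsto (\theta(x), \theta(f))$. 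A short diagram chase, identical to the simplicial case and relying only on functoriality of the frame-based exponential $Y^{(-)}$ in its simplicial-set variable, shows that $\Psi$ satisfies the equalizer compatibility condition (\ref{eq_cond}); hence it descends to the desired map $[\theta; F]$ between the two equalizers.

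Naturality in $F$ then follows by observing that a natural transformation $\eta \colon F \lra F'$ induces morphisms of the products that commute with both parallel pairs and with $\Psi$, producing the analogue of square (\ref{theta_square}). Naturality in $\theta$ takes slightly more care: a natural transformation $\beta \colon \theta \lra \theta'$ induces natural transformations $\phi_c \Rightarrow \phi'_c$ of slice-category functors, and after applying $(F(-))^{N(-)}$ and pairing with the action of $F$ on $\beta$, one recovers square (\ref{nat_theta}). I expect the only subtle point to be checking that the frame-based $Y^K$ is \emph{strictly} (not merely up to homotopy) functorial in $K$, since this is what makes $\Psi$ and the naturality squares well-defined in $\mcalm$ itself rather than only in its homotopy category; but this strict functoriality is built into Hirschhorn's definition of a framing, so no genuinely new input beyond careful bookkeeping of the frame data is required, and the argument of Proposition~\ref{induced_holim_prop} transports without modification.
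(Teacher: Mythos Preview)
The paper does not actually prove this proposition: it is stated with a direct citation to \cite[Proposition 19.1.8]{hir03} and no argument is given. So there is no ``paper's own proof'' to compare against beyond the reference.

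Your sketch is a correct elaboration of what lies behind that citation. Hirschhorn's Definition~19.1.5 of the homotopy limit in a framed model category is precisely the equalizer formula of Definition~\ref{holim_defn} with $Y^K$ interpreted via the chosen simplicial frame, so your opening remark that the formula ``agrees up to canonical isomorphism'' understates matters: it \emph{is} the definition, and the construction of $[\theta;F]$ via the map $\Psi$ then proceeds word-for-word as in Proposition~\ref{induced_holim_prop}. Your identification of the one delicate point---strict (not merely homotopical) functoriality of $Y \mapsto Y^K$ in $K$---is on target, and as you note this is part of the data of a framing in Hirschhorn's sense, so no additional work is required. The naturality squares (\ref{nat_theta}) and (\ref{theta_square}) follow by the same diagram chases as in the simplicial case.
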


\begin{prop} \label{fsrik_propg} 
Let $\mcalb$ and $\bkm$ as in Definition~\ref{fsb_defn}. Let $\mcalm$ be a  model category, and 
let $F \colon \bkm \lra \mcalm$ be an objectwise fibrant cofunctor. 
\begin{enumerate}
\item[(i)] There is a natural transformation $\eta$ from $F$ to  the restriction $\fsb | \bkm$ (see Definition~\ref{fsb_defn}), which is an objectwise weak equivalence.  
\item[(ii)] If $F$ is  an isotopy cofunctor (see Definition~\ref{isotopy_cof_defn}), then so is the restriction of $\fsb$ to $\bkm$. 
\end{enumerate} 
\end{prop}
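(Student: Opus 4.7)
The plan is to transcribe the proof of Proposition~\ref{fsrik_prop} essentially verbatim, replacing each simplicial–model-category input by the general–model-category analogue already packaged in this subsection. For part (i), I fix $U \in \bkm$ and introduce the two endofunctors $\theta, \theta' \colon \bk(U) \lra \bk(U)$ defined by $\theta(V) = V$ and $\theta'(V) = U$; the inclusions $V \hra U$ assemble into a natural transformation $\beta \colon \theta \lra \theta'$. Feeding this into the naturality of $[-;-]$ in $\theta$ from Proposition~\ref{induced_holim_propg} produces the commutative square
\[
\xymatrix{\fsb(U) \ar[rr]^-{[\theta; F]}  &  &  \fsb(U)  \\ \fsb(U) \ar[u]^-{id} \ar[rr]_-{[\theta'; F]} &  & \underset{V \in \bk(U)}{\text{holim}}\; F(U) \ar[u]_-{\text{holim}(F\beta)}. }
\]

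Now I want to argue that the three marked maps besides the identity are all weak equivalences. The top map $[\theta; F]$ is literally the identity, and the bottom map $[\theta'; F]$ is a weak equivalence by Theorem~\ref{htpy_cofinal_thmg}: the functor $\theta'$ is homotopy right cofinal since, for every $V \in \bk(U)$, the under category $V \downarrow \theta'$ has the terminal object $(U, V \hra U)$, and the hypothesis of Theorem~\ref{htpy_cofinal_thmg} is met because $F$ is objectwise fibrant by assumption. Two-out-of-three applied to the square then forces $\text{holim}(F\beta)$ to be a weak equivalence. A parallel application of Theorem~\ref{htpy_cofinal_thmg} to the inclusion of $U$ as a terminal object of $\bk(U)$ gives a weak equivalence $F(U) \stackrel{\sim}{\lra} \underset{V \in \bk(U)}{\text{holim}}\; F(U)$, and I define $\eta[U]$ as the composite of this map with $\text{holim}(F\beta)$. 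Naturality of $\eta$ in $U$ is immediate from the naturality of $[-;-]$ in the second variable (Proposition~\ref{induced_holim_propg}).

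Part (ii) is then a formal consequence. The restriction $\fsb|\bkm$ is objectwise fibrant because homotopy limits of objectwise-fibrant diagrams are fibrant in the Hirschhorn framing formalism, which gives condition~(a) of Definition~\ref{isotopy_cof_defn}. For condition~(b), let $U \hra U'$ be an isotopy equivalence in $\bkm$; the naturality square of $\eta$ at this map has horizontal arrows that are weak equivalences by part~(i), and a left vertical arrow $F(U') \lra F(U)$ that is a weak equivalence because $F$ is an isotopy cofunctor, so the right vertical arrow $\fsb(U') \lra \fsb(U)$ is a weak equivalence by two-out-of-three.

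I do not expect a genuine obstacle, since the original proof relied only on the naturality statement for $[-;-]$, the cofinality theorem, and the homotopy invariance of homotopy limits, and direct analogues of all three have been provided as Proposition~\ref{induced_holim_propg}, Theorem~\ref{htpy_cofinal_thmg}, and Theorem~\ref{fib_cofib_thmg}. The only point deserving an explicit comment is the preservation of fibrancy by homotopy limits of objectwise-fibrant diagrams, a standard property of the Hirschhorn framework that is needed to verify axiom~(a) of isotopy cofunctor for $\fsb|\bkm$.
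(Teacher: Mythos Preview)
Your proposal is correct and matches the paper's approach exactly: the paper's proof is the single line ``This is very similar to the proof of Proposition~\ref{fsrik_prop}'', and you have faithfully transcribed that proof, swapping in Proposition~\ref{induced_holim_propg}, Theorem~\ref{htpy_cofinal_thmg}, and Theorem~\ref{fib_cofib_thmg} for their simplicial counterparts. Your closing remark about fibrancy of the homotopy limit is apt, since Theorem~\ref{fib_cofib_thmg} packages it inside part~(ii) rather than stating it separately as Theorem~\ref{fib_cofib_thm}~(i) does, but the content is there.
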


\begin{proof}
This is very similar to the proof of Proposition~\ref{fsrik_prop}. 
\end{proof}

\subsection{The category $\dv$}  \label{dv_subsection}

Consider the following data:
\begin{enumerate}
\item[$\bullet$] $U, U' \in \om$ such that $U \subseteq U'$;
\item[$\bullet$] $V \in \ok(U)$;
\item[$\bullet$] $L \colon U \times I \lra U', (x, t) \mapsto L_t(x):= L(x, t)$ is an isotopy from $U$ to $U'$ (see Definition~\ref{iso_eq_defn}).  
\end{enumerate}

The aim of this subsection is to define an important  category $\dv$ (see Definition~\ref{dv_defn}) out of these data. The definition of $\dv$ is rather technical. Roughly speaking, an object of $\dv$ is a zigzag $x$ of isotopy equivalences between $W$ and $L_1(W)$, where $W$ is a object of $\ok(V)$. Morphisms of $\dv$ are inclusions. There are two types of morphisms from $x$ to $y$ depending on the fact that $x$ and $y$ have the same length or not.  If $x$ and $y$ have the same length, a morphism from $x$ to $y$ is just an inclusion. Otherwise a morphism is still an inclusion, but more subtle.  We also prove Proposition~\ref{dv_contractible_prop}, which says  that $\dv$  is contractible.

Let us begin with the following notation, and some technical definition.  

\begin{nota} \label{ssie_not}
Given two objects $W, T \in \om$, we use the notation $W \ssie T$ to mean that $W$ is a subset of $T$ and the inclusion map $W \hra T$ is an isotopy equivalence. 
\end{nota}

In \cite[Section 3.2]{paul_don17} we introduced the concept of \textit{admissible family} $x=\{a_0, \cdots, a_{n+1}\}$ with respect to $L$ and a compact subset $K \subseteq U$. If one has different compact for each interval $[a_i, a_{i+1}]$, the family $x$ is said to be \textit{piecewise admissible}. More precisely, we have the following definition.  

\begin{defn} \label{pw_adm_defn}
Let $W \in \ok(U)$, and let $[a, b] \subseteq [0, 1]$. Let $x=\{a_0, \cdots, a_{n+1}\} \subseteq [0, 1]$ be a family such that $a_0 = a, a_{n+1} = b,$ and $a_i \leq a_{i+1}$ for all $i$. Let  $K = \{K_0, \cdots, K_n\}$ be a family of nonempty compact subsets $K_i \subseteq L_{a_i}(W)$ such that $\pi_0(K_i \hra L_{a_i}(W))$ is surjective. The family $x$ is said to be \emph{piecewise admissible} with respect to $\{K, L\colon W \times I \lra U'\}$ (or just \emph{piecewise admissible}) if for every $i$ there exists an object $W_{i(i+1)}$ of $\okm$ such that for all $s \in [a_i, a_{i+1}]$, 
\begin{eqnarray}
L_s(L_{a_i}^{-1}(K_i)) \subseteq W_{i(i+1)} \ssie L_s(W),
\end{eqnarray}
and
\begin{eqnarray} \label{vii_eqn}
\overline{W_{i(i+1)}} \subseteq L_s(W),
\end{eqnarray}
where $L_{a_i} \colon W \lra L_{a_i}(W)$ is the canonical homeomorphism induced by $L$, and $\overline{W_{i(i+1)}}$ stands for the closure of $W_{i(i+1)}$. 
\end{defn}

%\begin{enumerate}
%\item[(i)] Let $[a, b] \subseteq [0, 1]$, and let $K \subseteq L_a(W)$ be a nonempty compact subset such that $\pi_0(K \hra L_a(W))$ is surjective. A family $\{a_0, \cdots, a_{n+1}\} \subseteq I$ such that $a_0 = a, a_{n+1} = b$, and $a_i \leq a_{i+1}$ for all $i$, is called \emph{admissible} in $[a, b]$ with respect to $\{K, L\colon W \times I \lra U'\}$ if for every $i \in \{0, \cdots, n\}$ there exists $W_{i(i+1)} \in \okm$ such that for all $s \in [a_i, a_{i+1}]$, one has
%\[
%L_s(L_a^{-1}(K)) \subseteq W_{i(i+1)} \ssie L_s(W) \quad \text{and} \quad \overline{W_{i(i+1)}} \subseteq L_s(W),
%\] 
%where $L_{a} \colon W \lra L_{a}(W)$ is the canonical diffeomorphism induced by $L$, and $\overline{W_{i(i+1)}}$ stands for the closure of $W_{i(i+1)}$.  
%\item[(ii)] When $n=0$, $\{a_0, \cdots, a_{n+1}\}$ is said to be a \emph{reduced admissible family} (in the interval $[a, b]$). 
%\end{enumerate}
%\end{defn}

%Notice that Definition~\ref{adm_interval_defn}~-(i) coincides with \cite[Definition 3.9]{paul_don17} when $a=0$ and $b=1$. In the following proposition, we still denote by $L$ the restriction of $L$ to $W \times I$.
The following proposition, which will be used in the proof of Proposition~\ref{dv_contractible_prop}, can be deduced easily from  \cite[Proposition 3.10 ]{paul_don17}.
\begin{prop} \label{existence_adm_prop}
%\begin{enumerate}
%\item[(i)] Given $W, [a, b], $ and $K$ as in Definition~\ref{adm_interval_defn}, there exists an admissible family in $[a, b]$ with respect to $\{K, L\}$. 
Let $[a, b] \subseteq I,$ and let $t \in [a, b]$. Let $W \in \ok(U)$, and let $K \subseteq L_t(W)$ be a nonempty compact subset such that $\pi_0(K \hra L_a(W))$ is surjective.
\begin{enumerate}
	\item[(i)] If $t = a$ (respectively $t=b$), there exists $t' > t$ (respectively $t'' < t$) such that the family $\{t, t'\}$ (respectively $\{t'', t\}$) is admissible with respect to $\{K, L\}$. 
	\item[(ii)]  If $t \in (a, b)$, there exists $\epsilon_t >0$ such that the family $\{t-\epsilon_t, t+\epsilon_t\}$ is admissible with respect to $\{K, L\}$.   
\end{enumerate}
%there exists $a' \in I$ (respectively $a'' \in I$) such that $\{a, a'\}$ (respectively $\{a'', a\}$)  is piecewise admissible  with respect to $\{K, L\}$. 
%\end{enumerate}
\end{prop}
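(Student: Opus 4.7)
The plan is a compactness-and-continuity argument that essentially specializes \cite[Proposition 3.10]{paul_don17} to the three possible positions of $t$ in $[a,b]$. I would focus on the interior case (ii); the one-sided cases in (i) are obtained by the same argument applied to an interval of the form $[t, t+\delta]$ or $[t-\delta, t]$.

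First I would construct a candidate thickening $W_{01} \in \okm$. Since $W \in \ok(U)$ and $L_t$ is a diffeomorphism onto its image, $L_t(W)$ is diffeomorphic to a disjoint union of at most $k$ open balls. Combining the compactness of $K$ with the $\pi_0$-surjectivity hypothesis, I can choose, in each component of $L_t(W)$ that meets $K$, a smaller open ball containing the corresponding piece of $K$ and whose closure still lies inside that component. Taking the disjoint union of these balls produces an object $W_{01} \in \okm$ satisfying $\overline{W_{01}} \subseteq L_t(W)$ and $W_{01} \ssie L_t(W)$ at the single time $s = t$.

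Next I would propagate these properties to a full neighborhood of $t$. Because $\overline{W_{01}}$ is compact, $L_s(W)$ depends continuously on $s$, and both $W_{01}$ and $L_s(W)$ are open, the joint continuity of $L$ on $L_t^{-1}(K) \times I$ yields some $\epsilon_t > 0$ such that for every $s$ with $|s-t| < \epsilon_t$ one has simultaneously $\overline{W_{01}} \subseteq L_s(W)$ and $L_s(L_t^{-1}(K)) \subseteq W_{01}$. The isotopy-equivalence condition $W_{01} \ssie L_s(W)$ persists on this interval too, because each component of $L_s(W)$ remains diffeomorphic to an open ball and contains the corresponding closed ball of $W_{01}$.

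The main obstacle, and the reason the authors reduce the statement to \cite[Proposition 3.10]{paul_don17}, is the uniform preservation of $W_{01} \ssie L_s(W)$ on a nontrivial interval around $t$, rather than at $s=t$ alone; this requires a careful tubular/collar argument rather than pointwise continuity. Granted that ingredient from the previous paper, part (ii) follows by taking the family $\{t-\epsilon_t, t+\epsilon_t\}$ with the chosen $W_{01}$, and parts (i) follow by the one-sided analogues with $t' = t+\epsilon_t$ or $t'' = t-\epsilon_t$.
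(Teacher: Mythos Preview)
Your sketch is correct and follows exactly the line the paper intends: the paper does not give an independent proof of this proposition but simply asserts it \lq\lq can be deduced easily from \cite[Proposition 3.10]{paul_don17}\rq\rq, and your compactness-and-continuity argument is precisely that deduction spelled out. One small clarification: the $\pi_0$-surjectivity hypothesis guarantees that $K$ meets \emph{every} component of $L_t(W)$, so your phrase \lq\lq each component of $L_t(W)$ that meets $K$\rq\rq{} in fact ranges over all components, which is exactly what is needed for $W_{01} \ssie L_t(W)$ to be an isotopy equivalence rather than merely an inclusion.
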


\begin{defn} \label{mcale_defn}
Define $\mcale$ to be the category whose objects are finite subsets $A= \{a_0, \cdots, a_{n+1}\}$ of the interval $[0, 1]$ such that $a_0 = 0, a_{n+1} = 1$ and $a_i \leq a_{i+1}$ for all $i$. Morphisms of $\mcale$ are inclusions. 
\end{defn}

\begin{defn} \label{ia_defn}
 Let $A = \{a_0, \cdots, a_{n+1}\}$ be an object of $\mcale$. Define  $\mcali_A$ to be the poset  whose objects are 
\[
\{a_0\}, \{a_1\} \cdots, \{a_n\}, \{a_0, a_1\}, \{a_1, a_2\}, \cdots, \{a_{n-1}, a_n\},
\]
and whose morphisms are inclusions $\{a_i\} \lra \{a_i, a_{i+1}\}$ and $\{a_{i+1}\} \lra \{a_i, a_{i+1}\}, 0 \leq i \leq n$. 
\end{defn}
The category $\mcali_A$ looks like a zigzag starting at $\{a_0\} = \{0\}$ and ending at $\{a_{n+1}\} =\{1\}$. For instance, if $n=2$, then 
\[
\mcali_A = \left\{\xymatrix{\{a_0\} \ar[r] & \{a_0, a_1\} & \{a_1\} \ar[l] \ar[r] & \{a_1, a_2\} & \{a_2\} \ar[l] } \right\}.
\]
%The following proposition says that the construction that sends $A$ to $\mcali_A$ is functorial. 

\begin{prop} \label{theta_ab_prop}
The construction that sends $A$ to $\mcali_A$ is a contravariant functor $\mcale \lra \text{Cat}$ from $\mcale$ to the category Cat of small categories.  
\end{prop}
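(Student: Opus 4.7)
The plan is to construct, for each inclusion $A \subseteq B$ in $\mcale$, a functor $\mcali_{A \subseteq B} \colon \mcali_B \lra \mcali_A$, and then verify the two functoriality axioms. Write $A = \{a_0, \ldots, a_{n+1}\}$ and $B = \{b_0, \ldots, b_{m+1}\}$. On objects, I would define the functor by a \emph{smallest-containing} assignment: each singleton $\{b_j\} \in \mcali_B$ is sent to $\{b_j\}$ if $b_j \in A$, and to the unique pair $\{a_i, a_{i+1}\} \in \mcali_A$ with $a_i < b_j < a_{i+1}$ otherwise. Each pair $\{b_j, b_{j+1}\} \in \mcali_B$ is sent to the unique pair $\{a_i, a_{i+1}\} \in \mcali_A$ satisfying $a_i \leq b_j < b_{j+1} \leq a_{i+1}$; such a pair exists and is unique because $b_j$ and $b_{j+1}$ are consecutive in $B \supseteq A$, which forces no element of $A$ to lie strictly between them.

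Next, I would check that the two generating morphisms $\{b_j\} \hra \{b_j, b_{j+1}\}$ and $\{b_{j+1}\} \hra \{b_j, b_{j+1}\}$ of $\mcali_B$ are sent to morphisms of $\mcali_A$. Let the image of the pair be $\{a_i, a_{i+1}\}$. For the first generator: either $b_j \in A$, in which case necessarily $b_j = a_i$ (it cannot be $a_{i+1}$ since $b_j < b_{j+1} \leq a_{i+1}$) and the induced arrow $\{a_i\} \lra \{a_i, a_{i+1}\}$ is a generating morphism of $\mcali_A$; or $b_j \notin A$, in which case $a_i < b_j < a_{i+1}$ forces the singleton image to coincide with the pair image and the induced arrow is the identity. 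The second generator is handled symmetrically, giving either $\{a_{i+1}\} \lra \{a_i, a_{i+1}\}$ or the identity. This shows $\mcali_{A \subseteq B}$ is a well-defined functor.

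Finally, for contravariant functoriality in the $\mcale$-variable: the inclusion $A \hra A$ induces the identity on $\mcali_A$, because each object of $\mcali_A$ is already its own smallest-containing object in $\mcali_A$; and given $A \subseteq B \subseteq C$, both the composite $\mcali_C \lra \mcali_B \lra \mcali_A$ and the functor directly induced by $A \subseteq C$ send an object of $\mcali_C$ to the smallest element of $\mcali_A$ containing it (the smallest-containing operation is transitive: the $A$-enclosure of the $B$-enclosure equals the $A$-enclosure), so the two functors agree. I do not anticipate any substantive obstacle; the only step that requires attention is the case analysis for morphisms in the second paragraph, which I have sketched above.
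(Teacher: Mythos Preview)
Your proposal is correct and follows essentially the same approach as the paper: the paper defines $\theta_{AB}(\{b\})$ via $c(b)=\max\{x\in A:x\le b\}$ and $d(b)=\min\{x\in A:x\ge b\}$, which is exactly your ``smallest-containing'' assignment, and similarly for pairs. If anything, your write-up is more complete: you actually verify that the generating morphisms of $\mcali_B$ are sent to morphisms of $\mcali_A$ (the paper leaves this as ``the most obvious way''), and your characterization of $\mcali_{A\subseteq B}$ as the smallest-containing operation makes the composition identity $\theta_{AC}=\theta_{AB}\theta_{BC}$ transparent, whereas the paper simply asserts it.
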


\begin{proof}
Given $A, B \in \mcale$ such that $A \subseteq B$ with $B = \{b_0, \cdots, b_{m+1}\}$, we need to define a morphism
\begin{eqnarray} \label{theta_ab}
\theta_{AB} \colon \mcali_B \lra \mcali_A.
\end{eqnarray} 
Let us begin with an example. Take $A = \{a_0, a_1, a_2\}$ and $B = \{b_0, b_1, b_2, b_3\}$ such that $b_2 = a_1$ as shown Figure~\ref{ab_fig}. 
\begin{figure}[ht!]
\centering
\includegraphics[scale = 0.7]{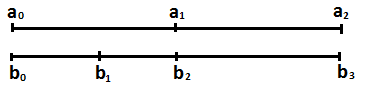}
\caption{An example of $A \subseteq B$} \label{ab_fig}
\end{figure}
The idea of the definition of $\theta_{AB}$ is as follows. First consider  the elements of $A \cap B = \{b_0, b_2, b_3\}$, and define $\theta_{AB}(\{b_0\}) = \{a_0\}, \theta_{AB}(\{b_2\}) = \{a_1\}$, and $\theta_{AB}(\{b_3\}) = \{a_2\}$. Next consider $B \backslash A = \{b_1\}$. Since $D =[a_0, a_1]$ is the smallest closed interval containing $b_1$ such that $\text{Inf} D \in A, \text{Sup} D \in A$, and $D \cap A = \{a_0, a_1\}$, we have $\theta_{AB}(\{b_1\}) := \{a_0, a_1\}$. A similar observation gives 
$
\theta_{AB}(\{b_0, b_1\}) := \{a_0, a_1\}, \theta_{AB}(\{b_1, b_2\}):= \{a_0, a_1\}, \text{ and } \theta_{AB}(\{b_2, b_3\}) = \{a_1, a_2\}. 
$ The following diagram summarizes the definition of $\theta_{AB}$. 
\[
\xymatrix{\{a_0\}  \ar[rr] &  & \{a_0, a_1\} & & \{a_1\} \ar[ll] \ar[r]  & \{a_1, a_2\} & \{a_2\} \ar[l]   \\
          \{b_0\} \ar[r] \ar[u] & \{b_0, b_1\} \ar[ru] & \{b_1\} \ar[u] \ar[l] \ar[r] & \{b_1, b_2\} \ar[lu] & \{b_2\} \ar[l] \ar[r] \ar[u] & \{b_2, b_3\} \ar[u] & \{b_3\}. \ar[l] \ar[u]}
\]

Now we give a precise definition of $\theta_{AB}$. 
%For $b \in A \cap B$, since $A \subseteq B$, there is a unique $r(b) \in \{0, \cdots, n\}$ such that $b = a_{r(b)}$. This defines a map $r \colon A \cap B \lra \{0, \cdots, n\}$. 
For $b \in B$, define 
	\[
	c(b) := \text{max}\{x \in A | \ x \leq b\} \quad \text{and} \quad d(b) := \text{min}\{x \in A| \ x \geq b\}.
	\]
Now define $\theta_{AB}$ as 
\[
\theta_{AB}(\{b\}) = \left\{ \begin{array}{ccc}
                       \{b\} & \text{if}  & b \in A \\
											 \{c(b), d(b)\} & \text{if} & b \notin A,
                      \end{array}  \right.
\]	
and 
\[
\theta_{AB}(\{b_i, b_{i+1}\}) = \{c(b_i), d(b_{i+1})\}.
\]
On morphisms of $\mcali_B$, $\theta_{AB}$ is defined in the most obvious way. Regarding the composition, if $A, B, C \in \mcale$ such  that $A \subseteq B \subseteq C$, then one obviously has 
\begin{eqnarray} \label{theta_abc}
\theta_{AC} = \theta_{AB}\theta_{BC}, 
\end{eqnarray}
which completes the proof. 
\end{proof}

We are now ready to define  $\dv$. 

\begin{defn} \label{dv_defn}
Recall  the posets $\mcale$ and $\mcali_A$ from Definition~\ref{mcale_defn} and Definition~\ref{ia_defn} respectively. Also recall the isotopy $L$ from the beginning of this subsection. The category $\dv$ is defined as follows. 
\begin{enumerate}
\item[$\bullet$] An object is a triple $(W, A, \xa)$ (or just a pair $(W, \xa$)) where $W \in \ok(V)$, $A = \{a_0, \cdots, a_{n+1}\} \in \mcale$, and $\xa \colon \mcali_A \lra \okm$ is a contravariant functor that satisfies the following three conditions: 
   \begin{enumerate}
	  \item[(a)] $\xa(\{a\}) = L_a(W)$ for all $a \in A$.
		\item[(b)] For every $i \in \{0, \cdots, n\}$, for every $s \in [a_i, a_{i+1}]$,
		 \[
		 \xa(\{a_i, a_{i+1}\}) \ssie L_s(W).
		 \] 
		(See Notation~\ref{ssie_not} for the meaning of \lq\lq$\ssie$\rq\rq{}.)
	\item[(c)] 	 For every $i \in \{0, \cdots, n\}$, for every $s \in [a_i, a_{i+1}]$,
		 \begin{eqnarray} \label{xai_eqn}
		 \overline{\xa(\{a_i, a_{i+1}\})} \subseteq L_s(W).
		 \end{eqnarray}
	\end{enumerate}
\item[$\bullet$] A morphism from $(W, A, \xa)$ to $(T, B, \yb)$ consists of a triple $(f, g, \Lambda_{AB})$ (or just $\Lambda_{AB}$) where $f \colon W \hra T$ and $g \colon A \hra B$ are both the inclusion maps, and $\Lambda_{AB} \colon \xa\theta_{AB} \lra \yb$ is a natural transformation.  
\end{enumerate}
\end{defn}

In other words, an object of $\dv$ is a zigzag of isotopy equivalences between $L_0(W) = W$ and  $L_1(W)$, where $W \in \ok(V)$. For instance, when $A = \{a_0, a_1, a_2\}$, an object looks like (\ref{xzig}). 
\begin{eqnarray} \label{xzig}
(W,\xa) = \left\{ \xymatrix{W=X_0 & X_{01} \ar[l]_-{\simeq} \ar[r]^-{\simeq} & X_1 & X_{12} \ar[l]_-{\simeq} \ar[r]^-{\simeq} & X_2 = L_1(W)}  \right\}.
\end{eqnarray}
There are two kind of morphisms from $(W,\xa)$ to $(T, \yb)$ depending on the fact that $A =B$ or $A$ is a proper subset of $B$. These morphisms are illustrated by (\ref{mor1}) and (\ref{mor2}). 
\begin{eqnarray} \label{mor1}
\xymatrix{X_0 \ar[d] & X_{01} \ar[l]_-{\simeq} \ar[r]^-{\simeq} \ar[d] & X_1 \ar[d] & X_{12} \ar[l]_-{\simeq} \ar[r]^-{\simeq} \ar[d] & X_2 \ar[d]  \\
Y_0 & Y_{01} \ar[l]^-{\simeq} \ar[r]_-{\simeq} & Y_1 & Y_{12} \ar[l]^-{\simeq} \ar[r]_-{\simeq} & Y_2. }
\end{eqnarray}

\begin{eqnarray} \label{mor2}
\xymatrix{X_0 \ar[d] & & X_{01} \ar[ll]_-{\simeq} \ar[rr]^-{\simeq} \ar[d] \ar[ld] \ar[rd] &  & X_1 \ar[d] &  X_{12} \ar[l]_-{\simeq} \ar[r]^-{\simeq} \ar[d]  & X_2 \ar[d]  \\
Y_0 & Y_{01} \ar[l]^-{\simeq} \ar[r]_-{\simeq} & Y_1 & Y_{12} \ar[l]^-{\simeq} \ar[r]_-{\simeq} & Y_2 & Y_{23} \ar[l]^-{\simeq} \ar[r]_-{\simeq} & Y_3.}
\end{eqnarray}

\begin{rmk} \label{associated_rmk}
To any piecewise admissible family $A$ (see Definition~\ref{pw_adm_defn}), one can associate a canonical object $(W, \xa)$ of $\dv$ by letting $\xa(\{a_i, a_{i+1}\}) := W_{i(i+1)}.$ 
\end{rmk}       

\begin{prop} \label{dv_contractible_prop} 
The category $\dv$ is contractible.               
\end{prop}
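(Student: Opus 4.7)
My plan is to prove $\dv$ is contractible by showing it is filtered, since any small filtered category has contractible nerve. In fact $\dv$ is a poset: every morphism is determined by the inclusions $W \hra T$ and $A \hra B$ on the first two coordinates, while the natural transformation $\Lambda_{AB}$ is unique when it exists because $\okm$ is a poset. So only nonemptiness and existence of binary upper bounds need to be checked.

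For nonemptiness, I apply Proposition~\ref{existence_adm_prop} at each $t \in [0,1]$ with $W = V$ and a suitable nonempty compact subset of $L_t(V)$, obtaining an open interval around $t$ on which a two-element admissible family with respect to $L$ exists. Compactness of $[0,1]$ yields a finite subcover whose endpoints assemble into a single piecewise admissible family $A = \{a_0, \ldots, a_{n+1}\}$ with respect to $L$ and $V$. Remark~\ref{associated_rmk} then produces an object $(V, A, \mathcal{V}_A) \in \dv$.

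For binary upper bounds, consider $(W_1, A_1, \mathcal{X}_1), (W_2, A_2, \mathcal{X}_2) \in \dv$. Since $W_i \in \ok(V)$ forces $W_i \subseteq V$, I take the upper bound to have first coordinate $V$. I set $A_3 = A_1 \cup A_2$, refined further if needed so that on each subinterval $[b_j, b_{j+1}] \subseteq A_3$ I can choose $\mathcal{Z}(\{b_j, b_{j+1}\}) \in \okm$ containing both $\mathcal{X}_1(\theta_{A_1 A_3}\{b_j, b_{j+1}\})$ and $\mathcal{X}_2(\theta_{A_2 A_3}\{b_j, b_{j+1}\})$, with $\mathcal{Z}(\{b_j, b_{j+1}\}) \ssie L_s(V)$ for every $s \in [b_j, b_{j+1}]$ and $\overline{\mathcal{Z}(\{b_j, b_{j+1}\})} \subseteq L_s(V)$. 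Together with $\mathcal{Z}(\{b\}) := L_b(V)$, this defines a functor $\mathcal{Z} \colon \mcali_{A_3} \lra \okm$ verifying conditions (a)--(c) of Definition~\ref{dv_defn}. The inclusions $W_i \subseteq V$, $A_i \subseteq A_3$, and the natural transformations $\mathcal{X}_i \theta_{A_i A_3} \lra \mathcal{Z}$ arising objectwise from the containments above then give morphisms $(W_i, A_i, \mathcal{X}_i) \lra (V, A_3, \mathcal{Z})$ in $\dv$ for $i = 1, 2$.

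The technical heart of the argument is the construction of $\mathcal{Z}(\{b_j, b_{j+1}\})$: one must enlarge two given zigzag stages into a single element of $\okm$ that is isotopy-equivalent to the (generally larger) ambient $L_s(V)$ uniformly for $s$ ranging over the whole subinterval. The obstacle is that a naive union can exceed $k$ components or fail uniform isotopy-equivalence. The remedy is to refine $A_3$ enough that on each short subinterval Proposition~\ref{existence_adm_prop}, applied to a compact subset containing both $\mathcal{X}_i$-stages, supplies an admissible stage with respect to $V$ of the required kind. This mirrors the piecewise-admissibility techniques developed in \cite{paul_don17}.
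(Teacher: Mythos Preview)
Your proposal is correct and follows essentially the same route as the paper: both prove contractibility by showing $\dv$ is a filtered poset, taking the common upper bound to have first coordinate $V$, refining $A_1 \cup A_2$ via Proposition~\ref{existence_adm_prop} applied to the compact union of the closures of the two given zigzag stages, and piecing the local data together by compactness of $[0,1]$. The only minor difference is that you treat nonemptiness explicitly, whereas the paper leaves it implicit (it follows from the same compactness-and-admissibility argument with $W=V$).
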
      

\begin{proof}
It suffices to show that $\dv$ is \textit{filtered}, that is, it satisfies the following two conditions: 
\begin{enumerate}
\item[(1)] For every pair of objects $(W,\xa)$ and $(T, \yb)$ there are morphisms to a common object $(W, \xa) \lra (S, \zc)$ and $(T, \yb) \lra (S, \zc)$;
\item[(2)] For every pair of parallel morphisms $\Lambda_{AB}, \Lambda'_{AB} \colon (W, \xa) \lra (T, \yb)$, there is some morphism $\Lambda_{BC} \colon (T, \yb) \lra (S, \zc)$ such that $\Lambda_{BC}\Lambda_{AB} = \Lambda_{BC} \Lambda'_{AB}$. 
\end{enumerate}
Since $\dv$ is a poset by definition, it clearly satisfies (2). To check (1), let $(W, \xa), (T, \yb) \in \dv$. Set $D = A \cup B$. Certainly $D$ is a finite subset, denoted $\{d_0, \cdots, d_{p+1}\}$, of $I$ such that $d_0 = 0, d_{p+1} =1$, and $d_i \leq d_{i+1}$ for all $i$.  One can write the intervals $[d_i, d_{i+1}], 0 \leq i \leq p,$ as 
\[
[d_i, d_{i+1}] = \big[a_{r(i)}, a_{r(i)+1}\big] \cap \big[b_{s(i)}, b_{s(i)+1}\big],
\]
where 
\[
a_{r(i)} := \text{max}\left\{x \in A| \ x \leq d_i \right\} \quad \text{and} \quad b_{s(i)} := \text{max}\left\{y \in B| \ y \leq d_i\right\}.
\]
Of course, $a_{r(i)+1}$ (respectively $b_{s(i)+1}$) is the successor of $a_{r(i)}$ in $A$ (respectively the successor of $b_{s(i)}$ in $B$). Note that the interior of $[d_i, d_{i+1}]$ does not intersect either $A$ or $B$. 

Take $S = V$. The idea of the construction of $\zc$ is to subdivide each $[d_i, d_{i+1}]$ into small intervals $[a, b]$ such that there exists $Z_{ab} \in \okm$ that is contained in $L_a(V) \cap L_b(V)$ and that contains both $\xa\left(\{a_{r(i)}, a_{r(i)+1}\}\right)$ and $\yb\left(\{b_{s(i)}, b_{s(i)+1}\}\right)$. So let $i \in \{0, \cdots, p\}$, and let $t \in [d_i, d_{i+1}]$.  Thanks to (\ref{xai_eqn}) one can  consider the compact subset $\mcalk_i \subseteq L_t(V)$ defined as 
\[
\mcalk_i = \overline{\xa\left(\left\{a_{r(i)}, a_{r(i)+1} \right\} \right)} \bigcup \overline{\yb\left(\left\{b_{s(i)}, b_{s(i)+1} \right\} \right)}.
\]
If $t \in (d_i, d_{i+1})$ then by  Proposition~\ref{existence_adm_prop} there exist $\epsilon_t > 0$ and $Z_{i(i+1)} \in \okm$ such that for all $u \in [t-\epsilon_t, t+\epsilon_t]$, 
\[
L_u(L_t^{-1}(\mcalk_i)) \subseteq Z_{i(i+1)} \ssie L_u(V) \quad \text{and} \quad \overline{Z_{i(i+1)}} \subseteq L_u(V).  
\]
%where $L_t \colon V \lra L_t(V)$ is the canonical diffeomorphism induced by $L$. 
%Define $\epsilon_1 := \text{min}(t', d_{i+1}) - t$.  Then, it is clear that 
Clearly one has
\[
\xa\left(\{a_{r(i)}, a_{r(i)+1}\}\right) \subseteq  L_{t-\epsilon_t}(V) \cap Z_{i(i+1)} \cap L_{t+\epsilon} (V)
\]
and
\[
\yb\left(\{b_{s(i)}, b_{s(i)+1}\}\right) \subseteq  L_{t-\epsilon_t}(V) \cap Z_{i(i+1)} \cap L_{t+\epsilon} (V). 
\] 
If $t= d_i$ (respectively $t=d_{i+1}$) there is an admissible family $\{d_i, t'\}$ (respectively $\{t'', d_{i+1}\}$) again by  Proposition~\ref{existence_adm_prop}. 
%Similarly, there exist  $Z'_{i(i+1)} \in \okm$ and $\epsilon_2 > 0$ such that both $\xa\left(\{a_{r(i)}, a_{r(i)+1}\}\right)$ and $\yb\left(\{b_{s(i)}, b_{s(i)+1}\}\right)$ are contained in the intersection $L_{t-\epsilon_2}(V) \cap Z'_{i(i+1)} \cap L_t(V)$. Taking $\epsilon_t:= \text{min}(\epsilon_1, \epsilon_2)$, and 
Letting $t$ vary in $[d_i, d_{i+1}]$ one obtains an open cover, $[d_i, t') \cup \{(t-\epsilon_t, t+\epsilon_t)\}_t \cup (t'', d_{i+1}])$, of $[d_i, d_{i+1}]$. Now, applying the compactness we get an ordered finite subset $C^i = \{c^i_0, \cdots, c^i_{n_i}\}$ of $[d_i, d_{i+1}]$ such that $c^i_0 = d_i$, $c^i_{n_i} = d_{i+1}$, and for every $j$ the interval $[c^i_j, c^i_{j+1}]$ is contained in one of the open subsets from the cover. This implies that $C:= \cup_{i=0}^p C^i$ is piecewise admissible
% (with respect to $L \colon V \times I \lra U'$) 
and contains both $A$ and $B$.  Moreover,  it is clear that the  associated object $(V, \zc)$ of $\dv$ (as in Remark~\ref{associated_rmk}) has  the desired property.
% by (\ref{xa_eqn}), (\ref{yb_eqn}), and the fact that $\xa$ and $\yb$ are both objects of $\dv$. 
This ends the proof.   
\end{proof}

\begin{coro} \label{theta_zo_lem}
The functors
\[
\theta_0 \colon \dv \lra \ok(V) \quad \text{and} \quad \theta_1 \colon \dv \lra \ok(L_1(V))
\]
defined as $\theta_0(W, \xa) = W$ and $\theta_1(W, \xa) = L_1(W)$ are both homotopy right cofinal (see Definition~\ref{cofinal_defn}). 
\end{coro}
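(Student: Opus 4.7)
The plan is to verify the definition of homotopy right cofinal directly. By Definition~\ref{cofinal_defn}, I must show that for every $W_0 \in \ok(V)$ (respectively $W_1 \in \ok(L_1(V))$) the under category $W_0 \downarrow \theta_0$ (resp.\ $W_1 \downarrow \theta_1$) is contractible. A preliminary observation is that $\dv$ is actually a poset: since $\okm$ itself is a poset, the natural transformation $\Lambda_{AB} \colon \xa\theta_{AB} \lra \yb$ in a morphism of $\dv$ is forced by the underlying inclusions, so between any two objects of $\dv$ there is at most one morphism. The under categories inherit this property. Hence it suffices to prove each under category is \emph{filtered}, i.e., nonempty and such that any two objects admit a common upper bound.

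For nonemptiness, I would exhibit a canonical object $(V,\mathcal{X}_V) \in \dv$ whose first component is $V$ itself. To build $\mathcal{X}_V$, I apply Proposition~\ref{existence_adm_prop} with $W = V$: at each $t \in [0,1]$ pick a compact $K_t \subseteq L_t(V)$ meeting every connected component of $L_t(V)$ (e.g.\ a closed ball in each of the finitely many components), producing an open cover of $[0,1]$ by admissibility intervals. Compactness of $[0,1]$ yields a finite subcover, hence a piecewise admissible family $A$, and Remark~\ref{associated_rmk} converts this into an object $(V, \mathcal{X}_V) \in \dv$. Then $W_0 \subseteq V = \theta_0(V, \mathcal{X}_V)$ and $W_1 \subseteq L_1(V) = \theta_1(V, \mathcal{X}_V)$, so both under categories are nonempty.

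For the cocone property, given two objects $((W,\xa), W_0 \hra W)$ and $((T,\yb), W_0 \hra T)$ of $W_0 \downarrow \theta_0$, I would invoke the construction in the proof of Proposition~\ref{dv_contractible_prop} essentially verbatim: that construction already chose $S = V$, producing $(V,\zc) \in \dv$ together with morphisms $(W,\xa) \to (V,\zc)$ and $(T,\yb) \to (V,\zc)$. The triangles with $W_0$ commute automatically because every arrow in sight is an inclusion in $\ok(V)$, so we obtain the required common upper bound in $W_0 \downarrow \theta_0$. The argument for $W_1 \downarrow \theta_1$ is identical, using $L_1(V) \supseteq W_1$ in place of $V \supseteq W_0$ and noting that $\theta_1(V, \zc) = L_1(V)$.

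There is no real obstacle beyond bookkeeping: the proof of Proposition~\ref{dv_contractible_prop} is doing all the heavy lifting, and the only new item is the nonemptiness statement, handled by the piecewise admissibility construction sketched above. Because the under categories are posets, no coequalization axiom needs to be verified, and contractibility follows at once from filteredness.
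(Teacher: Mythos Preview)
Your proposal is correct and follows essentially the same approach as the paper: the paper's proof simply asserts that for every $W \in \ok(V)$ the under category $W \downarrow \theta_0$ is contractible because ``this works exactly as the proof of Proposition~\ref{dv_contractible_prop},'' and similarly for $\theta_1$. You spell out this reduction in more detail, and in particular you explicitly address nonemptiness of the under categories via the piecewise admissibility construction, a point the paper leaves implicit.
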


\begin{proof}
For every $W \in \ok(V)$ the under category $W \downarrow \theta_0$ is contractible. This works exactly as the proof of Proposition~\ref{dv_contractible_prop}. Similarly, one can show that $\theta_1$ is homotopy right cofinal. 
\end{proof}

\begin{prop} \label{dv_functor_prop}
The construction $\mcald \colon \ok(U) \lra \text{Cat}$ that sends $V$ to $\dv$ is a covariant functor. 
\end{prop}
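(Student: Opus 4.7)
The plan is to define $\mcald$ on morphisms by a \emph{tautological inclusion} of data, and then to observe that the conditions defining $\dv$ do not depend on the choice of $V$ (only on $W$ and on the ambient isotopy $L$), so that extending $V$ to a larger $V'$ simply enlarges the collection of admissible $W$'s without altering anything else.

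More precisely, let $f \colon V \hookrightarrow V'$ be a morphism of $\ok(U)$. I would define the functor $\mcald(f) \colon \mcald(V) \lra \mcald(V')$ on objects by $(W, A, \mcalx_A) \longmapsto (W, A, \mcalx_A)$, where the triple on the right is regarded as an object of $\mcald(V')$ via the inclusion $\ok(V) \subseteq \ok(V')$. First I would check that this is well-defined: since $W \subseteq V \subseteq V'$, we have $W \in \ok(V')$, and the conditions (a), (b), (c) of Definition~\ref{dv_defn} are statements about $W$, $A$, $\mcalx_A$, and the fixed isotopy $L$ alone (the sets $L_a(W)$ and the relations $\overline{\mcalx_A(\{a_i, a_{i+1}\})} \subseteq L_s(W)$ make no reference to $V$). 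On morphisms, a morphism $\Lambda_{AB} \colon (W, A, \mcalx_A) \lra (T, B, \mcaly_B)$ of $\dv$ is given by the inclusions $W \hookrightarrow T$, $A \hookrightarrow B$, and a natural transformation $\mcalx_A \theta_{AB} \lra \mcaly_B$; none of these data depend on $V$, so the same triple is a morphism in $\mcald(V')$, and I define $\mcald(f)(\Lambda_{AB}) := \Lambda_{AB}$.

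Next I would verify the two functoriality axioms. For the identity, $\mcald(\id_V)$ fixes each object and each morphism by construction, so $\mcald(\id_V) = \id_{\mcald(V)}$. For composition, given $V \hookrightarrow V' \hookrightarrow V''$, both $\mcald(V' \hookrightarrow V'') \circ \mcald(V \hookrightarrow V')$ and $\mcald(V \hookrightarrow V'')$ act as the identity on the underlying data $(W, A, \mcalx_A)$ and on morphisms, hence agree.

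There is essentially no obstacle here: the only thing one must not forget to check is that enlarging $V$ to $V'$ does not break conditions (a)--(c) of Definition~\ref{dv_defn}, and this is transparent from the fact that these conditions are formulated purely in terms of $W$ and the ambient isotopy $L$ on $U$, both of which are unchanged. Thus $\mcald \colon \ok(U) \lra \text{Cat}$ is a well-defined covariant functor.
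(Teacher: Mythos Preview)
Your proof is correct and follows exactly the same approach as the paper, which simply defines $\mcald(V \hookrightarrow V')$ by $(W, \mcalx_A) \mapsto (W, \mcalx_A)$ and asserts this is obviously a functor. You have merely spelled out the verifications (well-definedness of the assignment, identity, and composition) that the paper leaves implicit.
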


\begin{proof} It is very easy to establish. For a morphism $V \hra V'$ of $\ok(U)$, we define $\theta \colon \dv \lra \dvp$ as   $\theta(W, \xa) = (W, \xa)$. Certainly  this defines a functor from $\dv$ to $\dvp$.    
\end{proof}

%\begin{rmk} \label{xa_xap_rmk}
%\begin{enumerate}
%\item[(i)] By construction, there is a canonical natural transformation
%\begin{eqnarray}
%\alpha \colon \xa \lra \mathcal{X}'_A. 
%\end{eqnarray}
%\item[(ii)]Notice that (\ref{lslai_eqn}) might be not true if the inclusion $V \hra V'$ is not anymore an isotopy equivalence. So $\mcald$ might be not a canonical functor when replacing its domain $\okt(U)$ by $\ok(U)$. 
%\end{enumerate}
%\end{rmk}

\subsection{The functors $H, P_0, P_1 \colon \dv \lra \mcalm$}   \label{hp_subsection}

The goal of this subsection is to define three important functors, $H, P_0, P_1 \colon \dv \lra \mcalm$, and two  natural weak equivalences, $\eta_0 \colon H \lra P_0$ and $\eta_1 \colon H \lra P_1$. We  will  use them in the proof of Theorem~\ref{iso_cof_thm}, which will be done at Subsection~\ref{iso_cof_subsection}. In this subsection $\mcalm$ is a  model category, $F \colon \okm \lra \mcalm$ is an isotopy cofunctor (see Definition~\ref{isotopy_cof_defn}), and $F^{!} \colon \om \lra \mcalm$ is the cofunctor defined by (\ref{fsrik_defn}). We continue to use  the same data as those  provided at the beginning of Subsection~\ref{dv_subsection}.

\subsubsection{The functor $H \colon \dv \lra \mcalm$}

Before we define $H \colon \dv \lra \mcalm$, we need to first recall a certain model category of diagrams, and next define $\Psi \colon \dv \times \mcale \lra \mcalm$ (for the categories $\dv$ and $\mcale$, see Definition~\ref{dv_defn} and Definition~\ref{mcale_defn} respectively), which is functorial in each argument.

For $E \in \mcale$, consider the category $\mcalm^{\mcali_E}$ of $\mcali_E$-diagrams in $\mcalm$ (recall the poset $\mcali_E$ from Definition~\ref{ia_defn}). In the literature there exist many model structures on  $\mcalm^{\mcali_E}$. But for our purposes we endow it with the one described by Dwyer and Spalinski in \cite[Section 10]{dwyer_spa95}. First recall that this model structure is only defined for diagrams indexed by \textit{very small categories} (see the paragraph just after 10.13 from \cite{dwyer_spa95}), which is the case for $\mcali_E$. Next recall that this model structure states that weak equivalences and cofibrations are both objectwise. A map $\mcalx \lra \mcaly$ is a fibration if certain explicit morphisms in $\mcalm$ associated with $\mcalx \lra \mcaly$ are  fibrations. (See for example (10.9), (10.10), and Proposition 10.11 from \cite{dwyer_spa95}.) One of the advantages of this model structure is the fact that any diagram admits an explicit fibrant replacement as shown the following illustration. 
\begin{expl} \label{fib_repl_expl}
Consider the following objectwise fibrant diagram in $\mcalm$. 
\[
\mcalx = \left\{ \xymatrix{X_0 \ar[r]^-{f_0} & X_{01} & X_1 \ar[l]_-{f_1} \ar[r]^-{f_2}  & X_{12} & X_2  \ar[l]_-{f_3}} \right\}
\] 
Then its fibrant replacement, $R\mcalx$, is the second row of the following commutative diagram
\[
\xymatrix{X_0 \ar[r]^-{f_0} \ar@{>->}[d]^-{\sim}_-{g_0} & X_{01} \ar@{>->}[d]^-{\sim}_-{id} & X_1 \ar[l]_-{f_1} \ar[r]^-{f_2} \ar@{>->}[d]^-{\sim}_-{g_1} & X_{12} \ar@{>->}[d]^-{\sim}_-{id} & X_2 \ar[l]_-{f_3} \ar@{>->}[d]^-{\sim}_-{g_2}  \\
\xtild_{0} \ar@{->>}[r] & X_{01} & \xtild_1 \ar@{->>}[l] \ar@{->>}[r] & X_{12} & \xtild_2. \ar@{->>}[l]  }
\]
To get $R\mcalx$, first  we take a fibrant replacement $\xtild_{i(i+1)}, 0 \leq i \leq n$ (here $n=1$), of $X_{i(i+1)}$ in $\mcalm$. Since $\mcalx$ is objectwise fibrant, we then take $\xtild_{i(i+1)} = X_{i(i+1)}$. Next the functorial factorization of the composition $idf_0$ (respectively  $idf_3$) provides $\xtild_0$ (respectively $\xtild_{n+1} = \xtild_2$).  Lastly, $\xtild_1$ comes from the functorial factorization 
\[
\xymatrix{X_1 \ar[rr]^-{(idf_1, idf_2)}  \ar@{>->}[rd]^-{\sim}_-{g_1}  &  &  X_{01} \times X_{12} \\
      &   \widetilde{X}_1 \ar@{->>}[ru] }
\]
\end{expl}

\begin{rmk} \label{trx_rmk}
Let $\theta \colon \mcali \lra \mcalj$ be a functor between small categories, and let $\mcalx \colon \mcalj \lra \mcalm$  be an $\mcalj$-diagram in $\mcalm$. Then $\theta^*(R\mcalx)$ is not equal   to $R \theta^*(\mcalx)$ in general, but there is a natural map $\theta^*(R\mcalx) \lra R \theta^*(\mcalx)$. This map comes directly from the way we construct our fibrant replacements. 
\end{rmk}

Now we define $\Psi \colon \dv \times \mcale \lra \mcalm$. First recall the covariant functor $\theta_{AB} \colon \mcali_B \lra \mcali_A$
 defined in the course of the proof of Proposition~\ref{theta_ab_prop}. For $((W,\xa), E) \in \dv \times \mcale$ such that  $A \subseteq E$,  one can consider the composition 
\begin{eqnarray} \label{ieae_comp}
\xymatrix{\mcali_E \ar[rr]^-{\theta_{AE}} & & \mcali_A \ar[rr]^-{\xa} & & \okm \ar[rr]^-{F^{!}} & & \mcalm}, 
\end{eqnarray} 
which is nothing but an $\mcali_E$-diagram in $\mcalm$. Define  $\Psi((W,\xa), E) \in \mcalm$ as 
\begin{eqnarray} \label{psi_defn}
\Psi((W,\xa), E) = \left\{ \begin{array}{ccc}
											 \underset{\mcali_E}{\text{lim}} \; RF^{!} \theta_{AE}^*(\xa) & \text{if} & A \subseteq E \\
											 \emptyset & \text{if} & \text{$A$ is not contained in $E$},
                       \end{array} \right.
\end{eqnarray} 
where  $\emptyset$ stands for the initial object of $\mcalm$. 

\begin{prop}
The construction $\Psi \colon \dv \times \mcale \lra \mcalm$ that sends $((W,\xa), E)$ to $\Psi((W,\xa), E)$ is 
%a bifunctor, which is 
contravariant in the first variable and covariant in the second one. 
\end{prop}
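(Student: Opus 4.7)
The plan is to construct $\Psi$ on morphisms of each factor separately, verify functoriality in each, and then check that the two constructions commute so that $\Psi$ becomes a bifunctor of the stated variances. Throughout, the case distinction in (\ref{psi_defn}) forces a split on whether the relevant $A$ (or $B$) is contained in $E$; when it is not, the source or target is the initial object $\emptyset$ of $\mcalm$ and the map in question is forced.

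For covariance in $E$, suppose $E \subseteq E'$ in $\mcale$ and $A \subseteq E$ (hence $A \subseteq E'$). Proposition~\ref{theta_ab_prop} supplies a functor $\theta_{EE'} \colon \mcali_{E'} \lra \mcali_E$ satisfying $\theta_{AE'} = \theta_{AE}\theta_{EE'}$ by (\ref{theta_abc}). The universal property of the limit, applied to $\theta_{EE'}$ and the diagram $RF^{!}\theta_{AE}^{*}(\xa)\colon \mcali_E \lra \mcalm$, yields a canonical map
\[
 \underset{\mcali_E}{\lim}\,RF^{!}\theta_{AE}^{*}(\xa) \lra \underset{\mcali_{E'}}{\lim}\,\theta_{EE'}^{*}\bigl(RF^{!}\theta_{AE}^{*}(\xa)\bigr).
\]
Next, the natural comparison of Remark~\ref{trx_rmk}, applied to the diagram $F^{!}\theta_{AE}^{*}(\xa)$, provides a morphism of $\mcali_{E'}$-diagrams $\theta_{EE'}^{*}(RF^{!}\theta_{AE}^{*}(\xa)) \lra RF^{!}\theta_{AE'}^{*}(\xa)$; applying $\underset{\mcali_{E'}}{\lim}$ sends this to a map into $\Psi((W,\xa),E')$. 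Composing the two gives the action of $\Psi$ on $E \subseteq E'$. Functoriality in $E$ then follows from (\ref{theta_abc}) combined with the naturality of both the universal-property map and the comparison of Remark~\ref{trx_rmk}.

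For contravariance in the first variable, let $(f,g,\Lambda_{AB}) \colon (W,\xa) \lra (T,\yb)$ be a morphism in $\dv$, so that $A \subseteq B$ and $\Lambda_{AB}\colon \xa\theta_{AB} \Rightarrow \yb$. Assuming $B \subseteq E$, I would precompose $\Lambda_{AB}$ with $\theta_{BE}\colon \mcali_E \lra \mcali_B$ and use $\theta_{AE} = \theta_{AB}\theta_{BE}$ to obtain a natural transformation $\xa\theta_{AE} \Rightarrow \yb\theta_{BE}$ of contravariant functors $\mcali_E \lra \okm$. Applying the contravariant $F^{!}$ reverses the direction, and functoriality of the fibrant replacement (Example~\ref{fib_repl_expl}) lifts it to a map $RF^{!}\theta_{BE}^{*}(\yb) \Rightarrow RF^{!}\theta_{AE}^{*}(\xa)$; taking $\underset{\mcali_E}{\lim}$ then delivers the required $\Psi((T,\yb),E) \lra \Psi((W,\xa),E)$. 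Identities and composites are preserved because horizontal pasting of natural transformations, precomposition by $\theta_{-E}$, and $\lim$ are all functorial. Bifunctoriality, i.e.\ commutativity of the square associated with a pair $(E \subseteq E',\ (W,\xa) \lra (T,\yb))$, then follows by pasting the universal-property square for $\lim$, the naturality of the Remark~\ref{trx_rmk} comparison, and the functoriality of $R$.

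The main obstacle is purely organizational: the fibrant replacement $R$ interacts with restriction $\theta^{*}$ only up to the natural map of Remark~\ref{trx_rmk} rather than by strict equality, so associativity of the second-variable action and commutativity of the bifunctoriality square must be verified by a diagram chase built from that comparison map rather than by on-the-nose identification. Nothing conceptual is hidden here; the proof is a careful piece of bookkeeping.
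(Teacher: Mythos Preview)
Your proposal is correct and follows essentially the same approach as the paper: both construct the second-variable action as the composite of the change-of-index map on limits with the comparison $\theta_{EE'}^{*}R \to R\theta_{EE'}^{*}$ of Remark~\ref{trx_rmk}, and both construct the first-variable action by precomposing $\Lambda_{AB}$ with $\theta_{BE}$, applying $F^{!}$ and the functorial fibrant replacement, and then taking the limit. You are somewhat more explicit than the paper about the bifunctoriality square and the bookkeeping around $R$ versus $\theta^{*}$, but the underlying argument is identical.
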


\begin{proof}
Let $((W,\xa), E) \in \dv \times \mcale$. We have to prove two things. 
\begin{enumerate}
\item[$\bullet$] Functoriality in the first variable. Let $(T,\yb) \in \dv$ such that $A \subseteq B$, and let $\Lambda_{AB}$ be a morphism in $\dv$ from $(W,\xa)$ to $(T,\yb)$. Then, by Definition~\ref{dv_defn}, $\Lambda_{AB} \colon \xa \theta_{AB} \lra \yb$ is a natural transformation. If $B \subseteq E$, then one has the composition $F^{!}\Lambda_{AB} \theta_{BE} \colon F^{!}\xa\theta_{AB}\theta_{BE} \lla F^{!}\yb\theta_{BE}$ (remember that $F^{!}$ is contravariant, and that $\theta_{BE}$ is covariant), which is the same as 
\begin{eqnarray} \label{fab}
F^{!}\Lambda_{AB} \theta_{BE} \colon F^{!}\theta_{AE}^*(\xa) \lla F^{!}\theta_{BE}^*(\yb)
\end{eqnarray}
since $\theta_{AB}\theta_{BE} = \theta_{AE}$ by (\ref{theta_abc}). This induces a morphism
\[
\Psi(\Lambda_{AB}, id) := \text{lim}(RF^{!}\Lambda_{AB}\theta_{BE}) \colon \Psi((W,\xa), E) \lla \Psi((T,\yb), E). 
\]
If $E$ does not contain $B$, then $\Psi((T,\yb), E)$ is the initial object by definition, and therefore $\Psi(\Lambda_{AB}, id)$ is the unique morphism from $\emptyset$ to $\Psi((W,\xa), E)$. 

\item[$\bullet$] Functoriality in the second variable. Let $E' \in \mcale$ such that $E \subseteq E'$. If $A \subseteq E$, then we have
\[
\theta^*_{EE'}\left(F^{!}\theta^*_{AE}(\xa) \right) = F^{!}\theta^*_{AE'}(\xa)
\]
by (\ref{theta_abc}) and (\ref{theta_starx}). The map $$\Psi(id, E \hra E') \colon \Psi((W,\xa), E) \lra \Psi((W,\xa), E')$$ is then defined to be the composition
\[
\underset{\mcali_E}{\text{lim}} \; RF^{!} \theta_{AE}^* (\xa) \lra \underset{\mcali_{E'}}{\text{lim}} \; \theta^*_{EE'} \left(RF^{!} \theta_{AE}^* (\xa)\right) \lra \underset{\mcali_{E'}}{\text{lim}} \; R\theta^*_{EE'} \left(F^{!} \theta_{AE}^* (\xa)\right). 
\]
Here the first arrow is the canonical map induced by $\theta_{EE'} \colon \mcali_{E'} \lra \mcali_E$, and the second is the natural map that comes directly from the way fibrant replacements of $\mcali_{E'}$-diagram are constructed (see Example~\ref{fib_repl_expl} and Remark~\ref{trx_rmk}). As before, the case where $A$ is not contained in $E$ is obvious. 

%\item[$\bullet$] Compatibility. It follows directly from the definitions. 

%Let $\Lambda_{AB}$ be a morphism from $\xa$ to $\yb$ in $\dv$, and let $E \subseteq E'$ be a morphism in $\mcale$. Assume $B \subseteq E$. Then, since $\theta^*_{EE'}(F^{!}\Lambda_{AB}\theta_{BE}) = F^{!}\Lambda_{AB}\theta_{BE'}$, and since there is a morphism $F^{!}\Lambda_{AB} \theta_{BE} \colon F^{!}\theta_{BE}^*(\yb) \lra F^{!}\theta_{AE}^*(\xa)$ given by (\ref{fab}),  it follows that the square 
%\[
%\xymatrix{\underset{\mcali_E}{\text{holim}}\; F^{!}\theta_{BE}^*(\yb) \ar[rrr]^-{[\theta_{EE'}; F^{!}\theta_{BE}^*(\yb)]} \ar[d]_-{\text{holim}(F^{!}\Lambda_{AB}\theta_{BE})}  &  &  &  \underset{\mcali_{E'}}{\text{holim}}\; F^{!}\theta_{BE'}^*(\yb) \ar[d]^-{\text{holim}(F^{!}\Lambda_{AB}\theta_{BE'})}  \\
% \underset{\mcali_E}{\text{holim}}\; F^{!}\theta_{AE}^*(\xa) \ar[rrr]_-{[\theta_{EE'}; F^{!}\theta_{AE}^*(\xa)]} &  &  &  \underset{\mcali_{E'}}{\text{holim}}\; F^{!}\theta_{AE'}^*(\xa)  }
%\]
%commutes by (\ref{theta_square}). Clearly, if $B$ is not contained in $E$, the preceding square is obviously commutative. 
\end{enumerate}
This proves the proposition.  
\end{proof}

Before we define $H \colon \dv \lra \mcalm$, we need to equip the category $\mcalm^{\mcale}$ of $\mcale$-diagrams in $\mcalm$ with a nice model structure. Thanks to the fact that the category $\mcale$ is a \textit{direct category} (see \cite[Definition 5.1.1]{hovey99}), and therefore a \textit{Reedy category} (see \cite[Definition 5.2.1]{hovey99}), we can endow $\mcalm^{\mcale}$ with a Reedy model structure that we now recall.  For $E \in \mcale$, we define the \textit{latching space} functor $L_E \colon \mcalm^{\mcale} \lra \mcalm$ as follows. Let $\mcale_E$ be the category of non-identity maps in $\mcale$ with codomain $E$, and define $L_E$ to be the composite 
\[
L_E \colon \xymatrix{\mcalm^{\mcale} \ar[r] & \mcalm^{\mcale_E} \ar[rr]^-{\text{colim}} &  & \mcalm}
\]  
where the first arrow is restriction. Clearly there is a natural transformation $L_E\mcalx \lra \mcalx(E)$. 

\begin{thm}\cite[Theorem 5.1.3]{hovey99} \label{struc_me_thm}
There exists a model structure  on  the category $\mcalm^{\mcale}$ of $\mcale$-diagrams in $\mcalm$ such that weak equivalences and fibrations are objectwise. Furthermore, a map $\mcalx \lra \mcaly$ is  a (trivial) cofibration if and only if the induced map $\mcalx(E) \coprod_{L_E\mcalx} L_E\mcaly \lra \mcaly(E)$ is a (trivial) cofibration for all $E$. 
\end{thm}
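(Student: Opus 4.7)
The plan is to recognize $\mcale$ as a direct Reedy category and then invoke the standard machinery for constructing Reedy model structures, which simplifies considerably in the direct case. First I would equip $\mcale$ with the degree function $\deg(E) := \#E - 2$, i.e.\ the number of \emph{internal} points of $E \in \mcale$. Since every non-identity morphism of $\mcale$ is a strict inclusion, it strictly raises the degree, so $\mcale$ is a direct category (equivalently, a Reedy category whose inverse subcategory contains only identities). Consequently the matching category at any $E$ is empty, the matching object $M_E \mcalx$ is the terminal object of $\mcalm$, and the Reedy fibration condition degenerates to an objectwise fibration condition. This is what makes the statement of the theorem assert objectwise fibrations rather than a more elaborate matching-map condition.

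Next I would define the three classes: a map $\mcalx \lra \mcaly$ is a weak equivalence (resp.\ fibration) if each component $\mcalx(E) \lra \mcaly(E)$ is so in $\mcalm$, and a cofibration if for every $E \in \mcale$ the canonical latching map $\mcalx(E) \coprod_{L_E \mcalx} L_E \mcaly \lra \mcaly(E)$ is a cofibration in $\mcalm$. The 2-out-of-3 and retract axioms then follow immediately from the corresponding axioms in $\mcalm$ applied objectwise, since even cofibrations are characterized by pushout-like diagrams whose building blocks are preserved by retracts.

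The real content is the factorization and lifting axioms, which I would handle by induction on the degree. Given $f \colon \mcalx \lra \mcaly$, construct an intermediate diagram $\mcalz$ degreewise: at each $E$, one has an induced map $\mcalx(E) \coprod_{L_E \mcalx} L_E \mcalz \lra \mcaly(E)$ (with $L_E \mcalz$ already built from lower-degree data), which we factor in $\mcalm$ as a (trivial) cofibration followed by a (trivial) fibration. The pushout-out data below degree $E$ suffice to determine $L_E \mcalz$ since the latching category uses only strict predecessors. Functoriality in $\mcale$ of $\mcalz$ is automatic from the universal property of the latching colimit. For the lifting axiom, given a commutative square whose left leg is a (trivial) cofibration and right leg is a (trivial) fibration, one constructs a diagonal filler degreewise: at $E$ the required lift exists in $\mcalm$ against the latching map, and compatibility across morphisms follows because maps in $\mcale$ only go upward in degree.

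The main obstacle I expect is the bookkeeping in the inductive step: one must verify that the constructed $\mcalz$ and the constructed lifts assemble into genuine natural transformations over $\mcale$, not merely collections of objectwise data. This is handled by exploiting the fact that $\mcale$ is direct, so all coherence conditions reduce to the universal property of the pushout $\mcalx(E) \coprod_{L_E \mcalx} L_E \mcaly$ and involve no compatibility with matching objects. Once this inductive machinery is in place, the model structure axioms are satisfied by construction, and the characterization of cofibrations in the statement follows tautologically.
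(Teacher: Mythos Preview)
Your outline is correct and is precisely the standard argument: $\mcale$ is a direct category (every non-identity morphism strictly increases cardinality), so the Reedy model structure collapses to objectwise weak equivalences and fibrations, with cofibrations detected by the latching maps; the inductive construction of factorizations and lifts that you describe is exactly how Hovey proves Theorem~5.1.3. Note, however, that the paper does not give its own proof of this statement at all: it simply quotes the result with a citation to \cite[Theorem 5.1.3]{hovey99}, so there is nothing in the paper to compare your argument against beyond the reference itself.
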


Note that any object $\mcalx$  of $\mcalm^{\mcale}$ has an explicit cofibrant replacement $Q\mcalx \colon \mcale \lra \mcalm$ obtained by induction as follows. (Recall that by definition $\{0, 1 \}$ is the initial object of $\mcale$). First take the cofibrant replacement $Q\mcalx(\{0, 1\})$ of $\mcalx(\{0, 1\})$. Next, for   any other object  $E \in \mcale$, $Q\mcalx(E)$ comes from the functorial factorization of the obvious map 
\[
\underset{E' \subset E}{\text{colim}} \; Q\mcalx(E') \lra \mcalx(E), 
\]
where $E' \in \mcale$ runs over the set of proper subsets of $E$. As an example, the cofibrant replacement of $\Psi((W,\xa), -)$ is an $\mcale$-diagram on the form 
\begin{eqnarray} \label{qpsi_shape}
Q\Psi((W,\xa), -) = \xymatrix{   &   &  \bullet  \cdots \ar@{.}[d] \\
                                               \emptyset \ar[r] & Q\Psi((W,\xa), A) \ar@{.>}[ru] \ar@{.>}[r] \ar@{.>}[rd] & \bullet \cdots \ar@{.}[d]\\
                                                           &  &   \bullet \cdots  }  
\end{eqnarray} 

\begin{rmk} \label{cofibrant_rmk}
By construction, every object of the diagram $Q\mcalx$ is cofibrant in $\mcalm$. 
\end{rmk}

\begin{prop} \label{hoco_co_prop}
The natural map 
\[
\underset{\mcale}{\text{hocolim}} \; Q\Psi((W, \xa), -) \lra \underset{\mcale}{\text{colim}} \; Q\Psi((W, \xa), -)
\]
is a weak equivalence. 
\end{prop}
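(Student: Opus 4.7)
The plan is to recognize $Q\Psi((W,\xa),-)$ as a Reedy cofibrant diagram on the direct Reedy category $\mcale$ and then to invoke the standard fact that, for such diagrams, the canonical map from the homotopy colimit to the colimit is a weak equivalence.

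First I would equip $\mcale$ with its evident direct Reedy structure, whose degree function is cardinality; every non-identity morphism of $\mcale$ is a proper inclusion and thus strictly raises degree. Because $\mcale$ is direct, the latching object simplifies to $L_E Q\Psi((W,\xa),-) = \underset{E' \subsetneq E}{\text{colim}}\, Q\Psi((W,\xa),E')$. By the inductive construction of $Q$ recalled just before the statement, each latching map is precisely one of the cofibrations produced by the functorial factorization, so $Q\Psi((W,\xa),-)$ is Reedy cofibrant.

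Next I would check that $\mcale$ has \emph{cofibrant constants}, i.e.\ that for every cofibrant $X \in \mcalm$ the constant $\mcale$-diagram at $X$ is Reedy cofibrant. The latching category at $E \in \mcale$ is the poset of proper subsets of $E$ lying in $\mcale$; for $E \neq \{0,1\}$ this poset has $\{0,1\}$ as minimum, hence has an initial object, so the latching object of the constant diagram equals $X$ and the latching map is the identity, while for $E = \{0,1\}$ the latching map is $\emptyset \to X$, a cofibration since $X$ is cofibrant. Once cofibrant constants are established, the functor $\mathrm{colim} \colon \mcalm^{\mcale}_{\mathrm{Reedy}} \to \mcalm$ is left Quillen, so it takes Reedy cofibrant diagrams to their derived colimits. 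In particular, the ordinary colimit of $Q\Psi((W,\xa),-)$ represents its homotopy colimit, which is exactly the assertion that the natural map in the statement is a weak equivalence.

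The main obstacle is essentially bibliographic rather than conceptual: the classical comparison theorem between homotopy colimit and colimit, as formulated for instance in \cite[Theorem 19.9.1]{hir03}, is stated for \emph{projectively} cofibrant diagrams, and the projective model structure on $\mcalm^{\mcale}$ need not exist for a general model category $\mcalm$. Circumventing this requires the Reedy version of the argument together with the cofibrant-constants condition verified above, which is what the preceding steps accomplish.
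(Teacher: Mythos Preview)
Your proof is correct and follows exactly the approach the paper takes: the paper's proof is a one-line citation of \cite[Theorem 19.9.1]{hir03}, and what you have written is precisely the verification of the hypotheses of that theorem (direct Reedy structure on $\mcale$, Reedy cofibrancy of $Q\Psi((W,\xa),-)$ by construction, and cofibrant constants).

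One correction to your final paragraph: \cite[Theorem 19.9.1]{hir03} is \emph{not} stated for projectively cofibrant diagrams; it is stated exactly for Reedy cofibrant diagrams over a Reedy category with cofibrant constants, which is what you verified. So there is no bibliographic obstacle at all---your argument simply \emph{is} the application of that theorem, and the last paragraph can be deleted.
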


\begin{proof}
This follows from \cite[Theorem 19.9.1]{hir03}. 
\end{proof}

We come to the definition of $H$. 

\begin{defn}
Recall $\Psi$ from (\ref{psi_defn}), and define the functor $H \colon \dv \lra \mcalm$ as 
\begin{eqnarray} \label{functor_h}
H(W, \xa) = \underset{E \in \mcale}{\text{colim}} \; Q\Psi((W, \xa), E). 
\end{eqnarray}
\end{defn}

\begin{rmk}
Let $\mcale_A \subseteq \mcale$ denote the full subcategory whose objects are $E$ containing $A$, and  let $\psit((W,\xa), -)$ denote the restriction of $\Psi((W,\xa), -)$ to $\mcale_A$. Then one has
\begin{eqnarray} \label{hxa_formula}
H(W,\xa) = \underset{ \mcale}{\text{colim}} \; Q\Psi((W,\xa), -) \cong \underset{ \mcale_A}{\text{colim}} \; Q\psit((W,\xa), -). 
\end{eqnarray}
The   isomorphism \lq\lq $\cong$\rq\rq{}  follows from the fact that the diagram (\ref{qpsi_shape}) contains the initial object, $\emptyset$, of $\mcalm$.  
\end{rmk}

Now we define another map (the map $h$ below) which will be used in the next subsection. Recalling the data provided at the beginning of Subsection~\ref{dv_subsection}, and using definitions, we can easily see that for every $E \in \mcale_A$, for every $x \in \mcali_E$, one has $\xa \theta_{AE}(x) \subseteq U'$. Applying the contravariant functor $F^{!}$ to this latter inclusion, we get maps $F^{!}(U') \lra F^{!} \xa \theta_{AE}(x)$. This induces a natural transformation  $F^{!}(U') \lra \psit(\xa, -)$ between two $\mcale_A$-diagrams in $\mcalm$, the first one being the constant diagram (recall that $RF^{!}(U') = F^{!}(U')$ by the assumption that $F$ is objectwise fibrant and by Theorem~\ref{fib_cofib_thm}). Now taking the cofibrant  replacement of this latter map, passing to the colimit, and using (\ref{hxa_formula}) we have a map 
\begin{eqnarray} \label{map_h}
h \colon QF^{!}(U') \lra H(W,\xa), 
\end{eqnarray}
which is natural in $(W,\xa)$.

\subsubsection{The functors $P_0, P_1 \colon \dv \lra \mcalm$}

To define $P_0$ and $P_1$,  we will first define 
\[
\Phi_0 \colon \dv \times \mcale \lra \mcalm \quad \text{and} \quad \Phi_1 \colon \dv \times \mcale \lra \mcalm. 
\]
To do this, we need to introduce some notation. If $E= \{a_0, \cdots, a_{n+1}\}$ is an object of $\mcale$ and  $\mcalx \colon \mcali_E \lra \mcalm$ is a functor,  we define two objects $\phi_0 \mcalx$ and $\phi_1 \mcalx$  of $\mcalm$ as 
\[
\phi_0 \mcalx := \mcalx(\{a_0\}) \quad \text{and} \quad \phi_1 \mcalx := \mcalx(\{a_{n+1}\}).
\]
In other words, $\phi_0 \mcalx$ is the first object of the zigzag $\mcalx$, while $\phi_1 \mcalx$ is the last one. 

Let $((W,\xa), E) \in \dv \times \mcale$. If  $A \subseteq E$, then one can consider the composition $F^{!}\xa\theta_{AE} \colon \mcali_E \lra \mcalm$  (from (\ref{ieae_comp})), which is an object of $\mcalm^{\mcali_E}$. Let $RF^{!}\xa\theta_{AE}$ denote its fibrant replacement with respect to the Dwyer-Spalinski model structure we described before (see Example~\ref{fib_repl_expl} for an illustration of what we call fibrant replacement). Define $\Phi_0((W,\xa), E)$ as 
\begin{eqnarray} \label{phiz_defn}
\Phi_0((W,\xa), E) = \left\{ \begin{array}{ccc}
                         \phi_0R F^{!} \xa \theta_{AE} & \text{if} & A \subseteq E \\
													\emptyset  & \text{if} & \text{$A$ is not contained in $E$}. 
                         \end{array}  \right.
\end{eqnarray}

Replacing $\phi_0$ by $\phi_1$ in (\ref{phiz_defn}), we have the definition of $\Phi_1((W,\xa), E)$.  The following remark about $\Phi_0$ and $\Phi_1$ is important. 

\begin{rmk} \label{phixa_rmk}
By inspection, for every $((W,\xa),E) \in \dv \times \mcale$, one has 
\[
\Phi_0((W,\xa), E) = \Phi_0((W,\xa), A) \quad \text{and} \quad \Phi_1((W,\xa), E) = \Phi_1((W,\xa), A),
\]
provided that $A \subseteq E$. This easily comes from three things: the definition of $\dv$, that of $\theta_{AE}$, and the way   fibrant replacements of $\mcali_E$-diagrams in $\mcalm$ are constructed (see Example~\ref{fib_repl_expl}). 
\end{rmk}

\begin{prop}
The construction $\Phi_i \colon \dv \times \mcale \lra \mcalm, i=0, 1$, that sends $((W,\xa), E)$ to $\Phi_i((W,\xa), E)$ is 
%a bifunctor, which is 
contravariant in the first variable and covariant in the second one. %The same assertion holds for $\Phi_1$. 
\end{prop}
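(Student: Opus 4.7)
My plan is to follow essentially the same template as the proof given just above for $\Psi$, with the additional simplification provided by Remark~\ref{phixa_rmk} that makes covariance in the $\mcale$-variable almost trivial. In both variables there will be three logical cases depending on whether the relevant inclusion $A \subseteq E$ holds, and in the cases where the source is $\emptyset$ the required map is obtained by the universal property of the initial object.

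For contravariance in the first variable, consider a morphism $\Lambda_{AB} \colon (W,\xa) \lra (T,\yb)$ in $\dv$ and an object $E \in \mcale$. If $B \not\subseteq E$ then $\Phi_i((T,\yb),E) = \emptyset$ and the induced map $\Phi_i((T,\yb),E) \lra \Phi_i((W,\xa),E)$ is the unique such map. Otherwise $A \subseteq B \subseteq E$, and the natural transformation $\Lambda_{AB} \colon \xa\theta_{AB} \lra \yb$ together with the identity $\theta_{AB}\theta_{BE} = \theta_{AE}$ (from (\ref{theta_abc})) and the contravariance of $F^{!}$ yield a morphism
\[
F^{!}\Lambda_{AB}\theta_{BE} \colon F^{!}\yb\theta_{BE} \lra F^{!}\theta_{AE}^*(\xa)
\]
of $\mcali_E$-diagrams, exactly as in (\ref{fab}). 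Taking fibrant replacements functorially and applying $\phi_i$ gives the desired map $\Phi_i(\Lambda_{AB},id)$. Compatibility with composition follows from the functoriality of fibrant replacement, of $\phi_i$, and of the $\theta$'s, together with the cocycle identity (\ref{theta_abc}).

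For covariance in the second variable, fix $(W,\xa) \in \dv$ and a morphism $E \hra E'$ in $\mcale$. If $A \not\subseteq E$ then $\Phi_i((W,\xa),E) = \emptyset$ and again there is a unique map to $\Phi_i((W,\xa),E')$. If $A \subseteq E \subseteq E'$, then by Remark~\ref{phixa_rmk} one has
\[
\Phi_i((W,\xa), E) \;=\; \Phi_i((W,\xa), A) \;=\; \Phi_i((W,\xa), E'),
\]
so I would define $\Phi_i(id, E \hra E')$ to be the identity map; functoriality is then automatic.

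The two variances interact via the obvious interchange square, whose commutativity reduces to the compatibility of the construction $\mcalx \mapsto \phi_i R \mcalx$ with pullback along $\theta_{EE'}$ (already used in the proof for $\Psi$) and, when any corner is $\emptyset$, to uniqueness. I do not expect any genuine obstacle here; the only thing that needs care is the case analysis according to which of the inclusions $A \subseteq E$ and $B \subseteq E$ hold, but in every case where the source is initial the required map and its composition identities are forced.
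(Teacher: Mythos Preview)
Your proposal is correct and follows essentially the same approach as the paper's proof: both use the map $F^{!}\Lambda_{AB}\theta_{BE}$ from (\ref{fab}) followed by fibrant replacement and $\phi_i$ for the first variable, and both invoke Remark~\ref{phixa_rmk} to make the second-variable structure map the identity when $A \subseteq E$. Your version is somewhat more explicit about the initial-object cases and the interchange square, but there is no substantive difference.
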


\begin{proof}
For the functoriality in the first variable, let $\Lambda_{AB}$  be a morphism of $\dv$ from $(W,\xa)$ to $(T,\yb)$, and consider the map $F^{!}\Lambda_{AB}\theta_{BE}$ from (\ref{fab}). Its fibrant replacement  gives 
\[
\Phi_0(\Lambda_{AB}, id) \colon \Phi_0((W,\xa), E) \lla \Phi_0((T,\yb), E).
\] The functoriality in the second variable is obvious by Remark~\ref{phixa_rmk}. In fact, if $i \colon E \hra E'$ is a morphism of $\mcale$ then $\Phi_0((W,\xa), i) = id$ when $A \subseteq E$. 
%Lastly,  the compatibility follows directly from the definitions. 
 A similar proof can be performed with $\Phi_1$ in place of $\Phi_0$. 
\end{proof}

\begin{defn} Recall $\Phi_0$ and $\Phi_1$ from (\ref{phiz_defn}), and define $P_0 \colon \dv \lra \mcalm$ and $P_1 \colon \dv \lra \mcalm$ as
\begin{eqnarray} \label{functor_po}
P_0(W,\xa) = \underset{\mcale}{\text{colim}} \; Q\Phi_0((W,\xa), -) \quad \text{and} \quad  P_1(W,\xa) = \underset{\mcale}{\text{colim}} \; Q\Phi_1((W,\xa), -),
\end{eqnarray}
where $Q\Phi_i((W, \xa), -), i \in \{0, 1\}$, is the cofibrant replacement of the $\mcale$-diagram $\Phi_i((W, \xa), -)$ with respect  to the model structure given by Theorem~\ref{struc_me_thm}. 
\end{defn}
 
 Now we define two important maps ($p_0$ and $p_1$ below) that will be also used in the next subsection. First, recalling the definition of $\dv$ (from Definition~\ref{dv_defn}) and that of $\theta_{AB}$ (from (\ref{theta_ab})), one can see that the functor $F^{!}\xa\theta_{AE}$ from (\ref{ieae_comp}) is nothing but a zigzag in $\mcalm$ starting at $F^{!}(W)$ and ending at $F^{!}(L_1(W))$.  If  $\phit_i(\xa, -), i \in \{0, 1 \}$ denotes the restriction of $\Phi_i(\xa, -)$ to $\mcale_A$, by the definition of the fibrant replacement, (\ref{phiz_defn}) immediately implies the existence of two natural weak equivalences:  
\[
F^{!}(W) \stackrel{\sim}{\lra} \phit_0((W,\xa), -) \quad \text{and} \quad F^{!} (L_1(W)) \stackrel{\sim}{\lra} \phit_1((W,\xa), -).
\]
 Of course the functors involves are viewed as $\mcale_A$-diagrams.  Taking the cofibrant replacement with respect to the model structure described in Theorem~\ref{struc_me_thm}  of those maps, we get  $QF^{!}(W) \stackrel{\sim}{\lra} Q\phit_0((W,\xa), -)$ and $QF^{!}(L_1(W)) \stackrel{\sim}{\lra} Q\phit_1((W,\xa), -)$. Passing to the colimit, and using the observation (\ref{hxa_formula}) we did for $H(W,\xa)$, we have weak equivalences
 \begin{eqnarray} \label{qfv_colim}
 p_0 \colon QF^{!}(W) \stackrel{\sim}{\lra}   P_0(W,\xa) \quad \text{and} \quad p_1 \colon QF^{!}(L_1(W)) \stackrel{\sim}{\lra}  P_1(W,\xa).  
 \end{eqnarray}
Notice that these maps are both natural in $(W,\xa)$. Also notice that by Remark~\ref{phixa_rmk} the diagram $\phit_i((W,\xa), -)$ is in fact the constant diagram whose value is $\phit_i((W,\xa), A)$.

\subsubsection{The maps $\eta_i \colon H \lra P_i, i=0, 1$}

The aim here is to show that the definitions of $H$ (\ref{functor_h}), $P_0,$ and $P_1$ (\ref{functor_po}) imply the existence of  natural weak equivalences 
\begin{eqnarray} \label{etai}
\eta_0 \colon \xymatrix{H \ar[r]^-{\sim} & P_0} \quad \text{and} \quad \eta_1 \colon \xymatrix{H \ar[r]^-{\sim} & P_1}.  
\end{eqnarray}
We will  show the existence of $\eta_0$; the existence of $\eta_1$ is similar.  The idea is to define for every $((W,\xa), E) \in \dv  \times \mcale$ a natural map 
\[
\alpha[(W,\xa), E] \colon \Psi((W,\xa), E) \lra \Phi_0((W,\xa), E),
\]  
and show that it is a weak equivalence. Applying the cofibrant replacement functor, and then the  colimit functor to $\alpha[(W,\xa), -]$, and using Remark~\ref{cofibrant_rmk},  Theorem~\ref{fib_cofib_thmg}, and Proposition~\ref{hoco_co_prop} we will then deduce that $\eta_0$ is a weak equivalence. So let $((W,\xa), E) \in \dv \times \mcale$.  If $A$ is not contained in $E$, then $\Psi((W,\xa), E) = \emptyset = \Phi_0((W, \xa), E)$ by definition, and therefore $\alpha[(W,\xa), E]$ is the obvious map. Assume $A \subseteq E$. Then  $\Psi((W,\xa), E) = \underset{\mcali_E}{\text{lim}} \; RF^{!}\theta^*_{AE}(\xa)$ by definition. Define $\alpha[(W,\xa), E]$ to be obvious map
\begin{eqnarray} \label{lim_phi}
\underset{\mcali_E}{\text{lim}} \; RF^{!} \theta^*_{AE}(\xa) \lra \phi_0RF^{!} \theta^*_{AE}(\xa) = \Phi_0((W,\xa), E).
\end{eqnarray}
This map is so obvious because  $\phi_0RF^{!} \theta^*_{AE}(\xa)$ is nothing but  one piece from the diagram $RF^{!} \theta^*_{AE}(\xa)$. It is straightforward to check the naturality of $\alpha[(W,\xa), E]$ in both variables.  One can also see that (\ref{lim_phi}) is a weak equivalence essentially by the following reason. First, since  $\xa$ is a zigzag of isotopy equivalences by the condition (b) from Definition~\ref{dv_defn}, and   since $F^{!} | \okm$ is an isotopy cofunctor by Proposition~\ref{fsrik_propg}, it follows that every morphism of the diagram $F^{!}\theta^*_{AE} (\xa)$ is a weak equivalence. This implies that every morphism of  $RF^{!}\theta^*_{AE} (\xa)$ is a weak equivalence as well.   Moreover $RF^{!}\theta^*_{AE} (\xa)$ is fibrant. So every morphism of  the diagram $RF^{!}\theta^*_{AE} (\xa)$ is a weak equivalence which is also a fibration. Thanks to the shape of this diagram, one can compute its limit by taking successive pullbacks. An illustration of this is given by (\ref{pb_diag}).  
\begin{eqnarray} \label{pb_diag}
\xymatrix{\xtild_0 \ar@{->>}[r]^-{\sim} & X_{01} & \xtild_1 \ar@{->>}[l]_-{\sim} \ar@{->>}[r]^-{\sim} & X_{12} & \xtild_2 \ar@{->>}[l]_-{\sim} \\
       &   Y_1 \ar[lu] \ar[u] \ar[ru]  &      &  Y_2  \ar[lu] \ar[u] \ar[ru]  &      \\
				&     &    Z \ar[lu]  \ar[ru]   &   &   }
\end{eqnarray}
%Here $Z$ is the pullback of $Y_1 \lra \xtild_1 \lla Y_2$, which is also the desired limit (this is easy to check by applying the universal property for limits).  
Now applying the fact that the pullback of a fibration is again a fibration, and the fact that the pullback of a weak equivalence along a fibration is again a weak equivalence, we deduce that the map from $\underset{\mcali_E}{\text{lim}} \; RF^{!} \theta^*_{AE}(\xa)$ to each piece of the diagram is a weak equivalence.

\subsection{Proof of the main result of the section}  \label{iso_cof_subsection}

The goal here is to prove Theorem~\ref{iso_cof_thm}, which is the main result of this section.

\begin{proof}[Proof of Theorem~\ref{iso_cof_thm}]
Let $U \hra U'$ be an isotopy equivalence of $\om$, and let $L \colon U \times I \lra U', (x, t) \mapsto L_t(x)$, be an isotopy from $U$ to $U'$.  Our aim  is to show that the canonical map $F^{!}(U') \lra F^{!}(U)$ is a weak equivalence. The idea  is to first consider the commutative diagram (\ref{big_diag}), which will be defined below (for $V \in \ok(U)$).  Next we will show that the map 
\[ 
\underset{V \in \ok(U)}{\text{holim}}\; (f_1) \colon F^{!}(U') \lra \underset{V \in \ok(U)}{\text{holim}} \; B_1(V)
\]
 is a weak equivalence. By the two-out-of-three axioms, we will deduce successively that the maps $\underset{V \in \ok(U)}{\text{holim}}\; (\ftild_1)$, $\underset{V \in \ok(U)}{\text{holim}}\; (g)$, $\underset{V \in \ok(U)}{\text{holim}}\; (\ftild_0),$ and $\underset{V \in \ok(U)}{\text{holim}}\; (f_0)$ are weak equivalences. Using the fact that this latter morphism is a weak equivalence, we will deduce the theorem. 
 
\begin{eqnarray} \label{big_diag}
\xymatrix{               &  &  \bt_0(V) \ar[rrd]^-{k_0}_-{\sim} \ar[d]^-{\sim} &        &     \\
          QF^{!}(U') \ar[rru]^-{\ftild_0} \ar[rrd]^{g} \ar[d]_-{\sim} &   & B_0(V)  &     &      A_0(V)        \\
					F^{!}(U') \ar[rru]_{f_0} \ar[rrd]^{f_1}  &  & G(V)  \ar[rru]^-{\sim}_-{l_0} \ar[rrd]^-{l_1}_-{\sim} &  & \\
					QF^{!}(U') \ar[rru]_{g} \ar[rrd]_-{\ftild_1} \ar[u]^-{\sim} &   &  B_1(V) &  & A_1(V)   \\
							                &   &     \bt_1(V) \ar[u]_-{\sim} \ar[rru]^-{\sim}_-{k_1}   &  &  }
\end{eqnarray}

Now we explain the construction of the diagram (\ref{big_diag}). 

\begin{enumerate}
\item[$\bullet$] Q(-) is the cofibrant replacement functor in $\mcalm$.
\item[$\bullet$] The objects $B_i(V), \bt_i(V), i \in \{0, 1\}$, are defined as
\[
B_i(V) = \underset{(W, \xa) \in \dv}{\text{holim}} \; F^{!}(L_i(W)) \quad \text{and} \quad \bt_i(V) = \underset{(W, \xa) \in \dv}{\text{holim}} \; QF^{!}(L_i(W)).
\] 
(Recall that $L_0 \colon U \lra U'$ is the inclusion functor; so $L_0(W) = W$.) Clearly these objects are functorial in $V$. Indeed, if $V \hra V'$ is a morphism of $\ok(U)$ then we have the inclusion functor $\theta \colon \dv \lra \dvp$ defined in the course of the proof of Proposition~\ref{dv_functor_prop}. This latter functor and Proposition~\ref{induced_holim_propg} allow us to get the desired functoriality. 
\item[$\bullet$] G(V) is defined as $G(V) = \underset{(W, \xa) \in \dv}{\text{holim}} \; H(W, \xa)$, where $H \colon \dv \lra \mcalm$ is the functor from (\ref{functor_h}). As before the construction that sends $V$ to $G(V)$ is a contravariant functor. 
\item[$\bullet$] The maps $f_i, \ftild_i, i \in \{0, 1\}$, are defined as $f_i = \underset{\dv}{\text{holim}} (h_i)$ and $\ftild_i = \underset{\dv}{\text{holim}} (Qh_i)$ where $h_i \colon F^{!}(U') \lra F^{!}(L_i(W))$ is the obvious map obtained by applying $F^{!}$ to the inclusion $L_i(W) \subseteq U'$.  
\item[$\bullet$] The map $g$  is defined as $g = \underset{\dv}{\text{holim}} (h)$, where $h \colon QF^{!}(U') \lra H(W, \xa)$ is the map from (\ref{map_h}). 
\item[$\bullet$] The objects $A_i(V), i \in \{0, 1\}$, are defined as $A_i(V) := \underset{(W,\xa) \in \dv}{\text{holim}} \; P_i(W, \xa),$ where $P_i \colon \dv \lra \mcalm$ comes from (\ref{functor_po}). Again as before $A_i(V)$ is functorial in $V$. 
\item[$\bullet$] The maps $k_i, i \in \{0, 1\}$, are defined as $k_i = \underset{\dv}{\text{holim}} (p_i)$, where $p_i \colon QF^{!}(W) \stackrel{\sim}{\lra} P_i(W, \xa)$ comes from (\ref{qfv_colim}). Since $p_i$ is a weak equivalence, and since by assumption $F$ is objectwise fibrant, it follows that $k_i$ is a weak equivalence as well by Theorem~\ref{fib_cofib_thmg} \footnote{If $P_i(W, \xa)$ is not fibrant, one can always substitute it by its fibrant replacement}.
\item[$\bullet$] Lastly, the maps $l_i, i \in \{0, 1\}$, are defined as $l_i = \underset{\dv}{\text{holim}} (\eta_i)$, where $\eta_i \colon H \stackrel{\sim}{\lra} P_i$ is the natural transformation from (\ref{etai}). As before, $l_i$ is a  weak equivalence.  
\end{enumerate} 	
%Certainly $k_i$ is a weak equivalence since the category $\dv$ is contractible by Proposition~\ref{dv_contractible_prop}, and since $p_i$ is a weak equivalence for every $\xa$ \footnote{A precise reference is needed...}. 
%\item[$\bullet$] Lastly, $l_i, i= 0, 1,$ is defined as $l_i = \underset{\dv}{\text{holim}} \; (\eta_i)$, where $\eta_i \colon H \stackrel{\sim}{\lra} P_i$ is the natural transformation from (\ref{etai}). Since $\eta_i$ is a weak equivalence, and since by assumption every object of $\mcalm$ is fibrant, it follows that $l_i$ is a weak equivalence as well by Theorem~\ref{fib_cofib_thm}.  

Using definitions, one can see that every map from the diagram (\ref{big_diag}) is natural in $V$. One can also check that the squares containing $f_i, \ftild_i, k_i$ and $l_i, i \in \{0, 1\}$, are both commutative. Now  applying the homotopy limit functor (when $V$ runs over $\ok(U)$) to each morphism of  (\ref{big_diag}), we get a new diagram, denoted $\mathbb{D}$, in which the map $\text{holim}(f_1) \colon F^{!}(U') \lra \underset{V \in \ok(U)}{\text{holim}} \; B_1(V)$ is a weak equivalence because of the following. First consider the following commutative diagram constructed as follows. 
\begin{eqnarray} \label{small_diag}
\xymatrix{\underset{W \in \ok(L_1(V))}{\text{holim}}\; F^{!}(W)  \ar[rr]^-{\sim}  &   &  B_1(V) \\
           F^{!}(L_1(V))  \ar[u]^-{\sim} &   &  F^{!}(U'). \ar[u]_-{f_1} \ar[ll]^-{h_1} \ar[llu]_-{q}   }
\end{eqnarray}
\begin{enumerate}
\item[$\bullet$] The top map is nothing but $[\theta_1; F^{!}]$ (see Proposition~\ref{induced_holim_propg} for the notation \lq\lq$[-;-]$\rq\rq{}), where $\theta_1 \colon \dv \lra \ok(L_1(V))$ is defined as $\theta_1(W, \xa) = L_1(W)$, and $F^{!} \colon \ok(L_1(V)) \lra \mcalm$ is just the restriction of $F^{!}$ to $\ok(L_1(V))$. By Corollary~\ref{theta_zo_lem} the functor $\theta_1$ is homotopy right cofinal, and therefore the map $[\theta_1; F^{!}]$ is a weak equivalence by Theorem~\ref{htpy_cofinal_thmg}. 
\item[$\bullet$] The maps $f_1$ and $h_1$ have been defined before, while $q$ is induced by the canonical map $F^{!}(U') \lra F^{!}(W)$. 
\item[$\bullet$] The lefthand vertical map is the map $\text{holim} (\eta)$, where   $ \eta[W] \colon F(W) \stackrel{\sim}{\lra}  F^{!}(W)$ is the map from Proposition~\ref{fsrik_propg}.  
\end{enumerate}
Applying the homotopy limit functor (when $V$ runs over $\ok(U)$ of course) to each morphism of (\ref{small_diag}), we get a new commutative diagram, denoted $\mathbb{S}$, in which the map $\text{holim} (h_1) \colon F^{!}(U') \lra \underset{V \in \ok(U)}{\text{holim}} F^{!}(L_1(V))$ is a weak equivalence because of the following reason. Consider the functor $\theta \colon \ok(U) \lra \ok(U')$ defined as $\theta(V) = L_1(V)$. Also consider $F^{!} \colon \ok(U') \lra \mcalm$. Clearly $\theta$ is an isomorphism since $L_1 \colon U \lra U'$ is a homeomorphism.  So for any $W \in \ok(U')$, the pair $(\theta^{-1}(W), id)$ is the initial object of the under category $W \downarrow \theta$. This shows that $\theta$ is homotopy right cofinal, and therefore the map $[\theta; F^{!}] \colon \underset{V \in \ok(U')}{\text{holim}} \; F^{!}(V) \lra \underset{V \in \ok(U)}{\text{holim}} \; (\theta^{*}F^{!})(V)$ is a weak equivalence by Theorem~\ref{htpy_cofinal_thmg}. By inspection,  the map $\text{holim}(h_1)$ is nothing but the composition
\[
\xymatrix{\underset{V \in \ok(U')}{\text{holim}} \; F(V)  \ar[rr]^-{\text{holim}(\eta)}_-{\sim} &  &  \underset{V \in \ok(U')}{\text{holim}} \; F^{!}(V) \ar[rr]^-{[\theta; F^{!}]}_-{\sim}  &  &   \underset{V \in \ok(U)}{\text{holim}} \; (\theta^{*}F^{!})(V),}
\]
where  $\eta[V] \colon F(V) \stackrel{\sim}{\lra} F^{!}(V)$ is again the map from Proposition~\ref{fsrik_propg}. Now, applying the two-out-of-three axiom to the diagram $\mathbb{S}$ we deduce that the map $\text{holim}(f_1)$ is a weak equivalence. 

We come back to the diagram $\mathbb{D}$. As we said before, the two-out-of-three axiom shows successively that the maps $\text{holim}(\ftild_1)$, $\text{holim}(g), \text{holim} (\ftild_0), $ and $\text{holim} (f_0)$ are weak equivalences. Now, replacing \lq\lq$1 $\rq\rq{}   by  \lq\lq$0$\rq\rq{}   in the diagram (\ref{small_diag}), 
%and using the same reasoning as before, 
one can see that the map $\text{holim}(h_0) \colon F^{!}(U') \lra F^{!}(U)$ is a weak equivalence by the two-out-of-three axiom. But this is what we had to show. 
\end{proof}

\addcontentsline{toc}{section}{References}

\textsf{University of Regina, 3737 Wascana Pkwy, Regina, SK S4S 0A2, Canada\\
Department of Mathematics and Statistics\\}
\textit{E-mail address: pso748@uregina.ca}

\textsf{University of Regina, 3737 Wascana Pkwy, Regina, SK S4S 0A2, Canada\\
Department of Mathematics and Statistics\\}
\textit{E-mail address: donald.stanley@uregina.ca}


\begin{thebibliography}{99}
%\bibitem{aro_lam_vol07} G. Arone, P. Lambrechts, and I. Voli\'c , \textit{Calculus of functors, operad formality, and rational homology of embedding spaces}, Acta Math. 199 (2007), no. 2, 153--198.
%\bibitem{aro_tur12} G. Arone and V. Turchin, \textit{On the rational homology of high dimensional analogues of spaces of long knots}, arXiv:1105.1576v4 (2013).
%\bibitem{ber_moe03} C. Berger, I. Moerdijk, \textit{Axiomatic homotopy theory for operads}, Comment. Math. Helv., vol. 78 (2003), no. 4, 805--831.
%\bibitem{board_vogt68} J.M. Boardman and R.M. Vogt, \textit{Homotopy-everything H-spaces}, Bull. Amer. Math.  Soc. 74 (1968), 1117-1122. 
%\bibitem{boavida_weiss} P. Boavida de Brito and M. Weiss, \textit{Manifold calculus and homotopy sheaves}, To appear in  Homology Homotopy Appl.
%\bibitem{bous87} A.K. Bousfield, \textit{On the homology spectral sequence of a cosimplicial space}, Amer. J. Math., vol. 109 (1987), no. 2, 361--394.
\bibitem{bous_kan72} A.K. Bousfield and D. M. Kan,  \textit{Homotopy limits, completion and localizations}, Springer-Verlag, Berlin, 1972, Lecture Notes in Mathematics, Vol. 304.
%\bibitem{bud07} R. Budney, \textit{Little cubes and long knots}, Topology 46 (2007), no. 1, 1--27.
%\bibitem{cat_cot_ram_lon02} A. S. Cattaneo, P. Cotta-Ramusino, R. Longoni, \textit{Configuration spaces and Vassiliev classes in any dimension}, Algebr. Geom. Topol. 2 (2002) 949--1000. 
\bibitem{cha_sch01} W. Chacholski and J. Scherer, \textit{Homotopy theory of diagrams},  Mem. Amer. Math. Soc. 155 (2002), no. 736, x+90 pp. 
\bibitem{cis09} D.C. Cisinski, \textit{Locally constant functors},  Math. Proc. Cambridge Philos. Soc. 147 (2009), no. 3, 593--614. 
%\bibitem{cohen76} F. Cohen, T. Lada, and J. May, \textit{The homology of iterated loop spaces}, Springer-Verlag, Berlin, 1976, Lecture Notes in Mathematics, Vol. 533.  
%\bibitem{dwyer_hess10} W. Dwyer, K. Hess, \textit{Long knots and maps between operads}, Geom. Topol., vol.  16 (2012), no. 2, 919--955.
\bibitem{dhks04} W. Dwyer, P. Hirschhorn, D. Kan, and J. Smith, \textit{Homotopy limit functors on model categories and homotopical categories}, Mathematical Surveys and Monographs, 113, American Mathematical Society, Providence, RI, 2004. viii+181 pp.
\bibitem{dwyer_spa95} W. Dwyer, J. Spalinski, \textit{Homotopy theories and model categories}, Handbook of algebraic topology, 73--126, North-Holland, Amsterdam, 1995.
%\bibitem{fht00} Y. F\'elix, S. Halperin, J.C, Thomas, \textit{Rational Homotopy Theory}, Springer-Verlag, 2000.
%\bibitem{ful_har91}   W. Fulton and J. Harris, \textit{Representation theory. A first course.}, Graduate Texts in Mathematics, 129. Readings in Mathematics. Springer-Verlag, New York, 1991. xvi+551 pp. ISBN: 0--387--97527--6; 0--387--97495--4. 
%\bibitem{fress08} B. Fresse, \textit{Modules over Operads and Functors}, Lectures Notes in Mathematics, vol. 1967, Springer-Verlag, Berlin (2009).
%\bibitem{ger_vor95} M. Gerstenhaber, A. A. Voronov, \textit{Homotopy G-algebras and moduli space operad}, Internat. Math. Res. Notices 1995, no. 3, 141--153 (electronic).
%\bibitem{getzler_jones} E. Getzler, J. D. S. Jones,  \textit{Operads, homotopy algebra and iterated integrals for double loop spaces}, arXiv:hep-th/9403055.
%\bibitem{goe_jar09} P. G. Goerss and J. F. Jardine, \textit{Simplicial Homotopy Theory}, Springer, vol. 174, 2009.
% \bibitem{good90} T. Goodwillie, The differential calculus of homotopy functors, Proceedings of the International Congress of Mathematicians, Vol. I, II (Kyoto, 1990) (Tokyo), Math. Soc. Japan, 1991, pp. 621--630. 
%\bibitem{good91} T. Goodwillie, Calculus II, \textit{Analytic functors}, K-Theory 5 (1991/1992) 295--332.
\bibitem{good03} T. Goodwillie, Calculus III,  \textit{Taylor series}, Geom. Topol. 7 (2003) 645--711. 
%\bibitem{good_weiss99} T. G. Goodwillie and M. Weiss, \textit{Embeddings from the point of view of immersion theory: Part II}, Geom. Topol. 3 (1999) 103--118.
%\bibitem{har_lam_tur_vol08} R. Hardt, P. Lambrechts, V. Turchin, and I. Voli\'c, \textit{Real homotopy theory of semi-algebraic sets},  Algebr. Geom. Topol. 11 (2011), no. 5, 2477--2545.
%\bibitem{hatcher02} Allen Hatcher, \textit{Algebraic Topology}, Cambridge University Press, 2002. 
%\bibitem{hirsch76} M. W. Hirsch, \textit{Differential Topology}, Springer-Verlag, New York Heidelberg, Berlin, 1976.  
\bibitem{hir03} P. Hirschhorn, \textit{Models categories and their localizations}, Mathematical Surveys and Monographs, 99. American Mathematical Society, Providence, RI, 2003.
\bibitem{hir14} P. Hirschhorn, \textit{Notes on homotopy colimits and homotopy limits}, Work in progress available online at P. Hirschhorn's homepage:  $http://www-math.mit.edu/~psh/\#hocolim$.
\bibitem{hovey99}  M. Hovey, \textit{Model categories}, Mathematical Surveys and Monographs, vol. 63, \textit{American Mathematical Society, Providence, RI}, 1999.
%\bibitem{hovey98} M. Hovey, \textit{Monoidal model categories}, preprint arXiv:math. AT/9803002(1998).
%\bibitem{komawila12} G. Komawila Ngidioni and P. Lambrechts, \textit{Euler series, Stirling numbers and the growth of the homology of the space of long links}, Belgian Mathematical Society-Simon Stevin. Bulletin. (2012) (1370--1444). 
%\bibitem{kont99} M. Kontsevich, \textit{Operads and motives in deformation quantization}, Mosh\'e Flato (1937-1998), Lett. Math. Phys. 48 (1999), no. 1, 35--72.
%\bibitem{lam_tur_vol10} P. Lambrechts, V. Turchin and I. Vol\'c, \textit{The rational homology of spaces of long knots in codimension > 2},  Geom. Topol. 14 (2010) 2151--2187. 
%\bibitem{lam_vol} P. Lambrechts and I. Voli\'c, \textit{Formality of the little N-disks operad} (2014), Memoirs of the American Mathematical Society, Volume 230, Number 1079.
%\bibitem{maclane98} S. Mac Lane, \textit{Categories for the Working Mathematician} (second edition), Graduate Texts in Mathematics 5, Springer Verlag, 1998.
%\bibitem{may92} J. P. May, \textit{Simplicial objects in algebraic topology}, University of Chicago Press, Chicago, IL, 1992.
%\bibitem{may72} P. May, \textit{The geometry of iterated loop spaces}, Lectures Notes in Mathematics, Vol. 271, Springer-Verlag, Berlin-New York, 1972.
%\bibitem{may01} J. P. May, \textit{The additivity of traces in triangulated categories}, Adv. Math. 163 (2001), no. 1, 34--73. 
%\bibitem{mcc01} J. McCleary, \textit{A user's guide to spectral sequences}, Second Edition, Cambridge University Press, 2001.
%\bibitem {mcc_smith02} JE McClure, JH Smith, \textit{A solution of Deligne's Hochschild cohomology conjecture}, from: "Recent progress in homotopy theory(Baltimore, MD, 2000)", Contemp. Math. 293, Amer. Math. Soc. (2002) 153--193 MR1890736.
%\bibitem{mcc_smith04} JE.McClure and JH. Smith,\textit{ Cosimplicial objects and little n-cubes I}, Amer. J . Math., 126 (2004), no. 5,  1109--1153.
%\bibitem{mcc_smith040} JE McClure, JH Smith, \textit{Operads and cosimplicial objects: an introduction}, Axiomatic, enriched and motivic homotopy theory, 133-171, NATO Sci. Ser. II Math. Phys. Chem., 131, Kluwer Acad. Publ., Dordrecht, 2004.
%\bibitem{moriya12} S. Moriya, \textit{Sinha's spectral sequence and homotopical algebra of operads}, arXiv:1210.0996v2 (2012).
\bibitem{mun10} B. A. Munson, \textit{Introduction to the manifold calculus of Goodwillie-Weiss}, Morfismos, Vol. 14, No. 1, 2010, pp. 1--50. 
%\bibitem{mun_vol09} B. A. Munson and I. Voli\'c, \textit{Cosimplicial model for spaces of links}, Journal of Homotopy and Related Structures, To appear.
%\bibitem{muro11} F. Muro, \textit{Homotopy theory of nonsymmetric operads}, Algebr. Geom. Topol. 11 (2011), no. 3,  1541--1599.
%\bibitem{pirash00} T. Pirashvili, \textit{Hodge decomposition for higher order Hochschild homology}, Ann. Sci. Ecole Norm. Sup (4) 33 (2000), no. 2, 151--179.
%\bibitem{nee01} A. Neeman, \textit{Triangulated categories}, Annals of Mathematics Studies, 148, Princeton University Press, Princeton, NJ, 2001, viii+449 pp.
\bibitem{pryor15} D. Pryor, \textit{Special open sets in manifold calculus}, Bull. Belg. Math. Soc. Simon Stevin 22 (2015), no. 1, 89--103.
%\bibitem{sakai08} K. Sakai, \textit{Poisson structures on the homology of the space of knots}, Groups, homotopy and configuration spaces, 463--482, Geom. Topol. Monogr., 13, Geom. Topol. Publ., Coventry, 2008.
%\bibitem{sal01} P. Salvatore, \textit{Configurations spaces with summable labels. Cohomological methods in homotopy theory} (Bellatera, 1998), 375--395, Progr. Math. 196, Birkhauser, Basel, 2001. 
%\bibitem{sal06} P. Salvatore, \textit{Knots, operads and double loop spaces}, Int. Math. Res. Not. 2006, Art. ID 13628, 22 pp.
%\bibitem{sal10} P. Salvatore, \textit{The Topological cyclic Deligne conjecture}, Algebr. Geom. Topol. 9 (2009), no. 1, 237--264.
%\bibitem{serre77} Jean-Pierre Serre, \textit{Linear representations of finite groups}, Translated from the second French edition by Leonard L. Scott. Graduate Texts in Mathematics, Vol. 42. Springer-Verlag, New York-Heidelberg, 1977. x+170 pp. ISBN: 0--387--90190--6. 
%\bibitem{shubert49} H. Shubert, \textit{Die eindeutige Zerlegbarkeit eines Knoten in Primknoten}, Sitzungsber. Akad. Wiss. Heidelberg, math.-nat. KI., \textbf{3:57-167}. (1949).
%\bibitem{sin04} D. Sinha, \textit{Manifold-theoretic  compactifications of configuration spaces}, Selecta Math. 10 (2004), no. 3, 391--428.
%\bibitem{sin06} D. Sinha, \textit{Operads and knot spaces}, J. Amer. Math. Soc., 19 (2006), no.2, 461--486 (electronic).
%\bibitem{sin09} D. Sinha, \textit{The topology of spaces of knots: cosimplicial models}, Amer. J. Math. \textbf{131} (2009), no. 4, 945--980.
%\bibitem {sma59} S. Smale, \textit{The classification of immersions of spheres in Euclidean spaces}, Ann. of Math. (2), 69:327--344, 1959.
%\bibitem{songhaf12} P. A. Songhafouo Tsopm\'en\'e, \textit{Formality of Sinha's cosimplicial model for long knots spaces and the Gerstenhaber algebra structure of homology}, Algebr. Geom. Topol. 13 (2013) 2193--2205.
%\bibitem{songhaf13} P. A. Songhafouo Tsopm\'en\'e, \textit{The rational homology of spaces of long links} (2013), arXiv:1312.7280.
%\bibitem{spi01} M. Spitzweck, \textit{Operads, Algebras and Modules in General Model Categories}, preprint arXiv:math.AT/ 0101102 (2001).
\bibitem{paul_don17}  P. A. Songhafouo Tsopm\'en\'e, D. Stanley, \textit{Very good homogeneous functors in manifold calculus}, Preprint. 2017, arXiv:1705.0120.
\bibitem{paul_don17-3}  P. A. Songhafouo Tsopm\'en\'e, D. Stanley, \textit{Classification of homogeneous functors in manifold calculus}, Preprint. 2018, arXiv:1807.06120.
%\bibitem{turchin12} V. Turchin, \textit{Context-free manifold calculus and the Fulton-MacPherson operad}, To appear in Algebr. Geom. Topol.
%\bibitem{tur10} V. Turchin, \textit{Delooping totalization of a multiplicative operad}, Journal of Homotopy and Related Structures (2013).
%\bibitem{turchin10} V. Turchin, \textit{Hodge-type decomposition in the homology of long knots}, J. Topol. 3 (2010), no. 3, 487--534.
%\bibitem{turchin04} V. Turchin, \textit{On the homology of the spaces of long knots}, from: "Advances in topological quantum field theory", NATO Sci. Ser. II Math. Phys. Chem. 179, Kluwer Acad. Publ., Dordrecht (2004) 23--52.
%\bibitem{turchin07} V. Turchin, \textit{On the other side of the bialgebra of chord diagrams}, J. Knot Theory Ramifications 16 (2007) 575--629. 
%\bibitem{vassiliev90} V. Vassiliev, \textit{Cohomology of knots spaces}, Theory of singularities and its applications, 23--69, Adv. Soviet Math., 1, Amer. Math. Soc., Providence, RI, 1990.
%\bibitem{ver71} J.L. Verdier, \textit{Cat\'egories d\'eriv\'ees}, in Springer Lecture Notes in Mathematics Vol 569, 262--311. 1971.
%\bibitem{weibel94} C. Weibel, \textit{An Introduction to Homological Algebra}, Volume 38 of Cambridge Studies in Advanced Mathematics, Cambridge University Press, Cambridge, 1994. 
%\bibitem{wei96} M. Weiss, \textit{Calculus of Embeddings}, Bull. Amer. Math. Soc. 33 (1996) 177--187. 
\bibitem{wei99} M. Weiss, \textit{Embedding from the point of view of immersion theory I}, Geom. Topol. 3 (1999), 67--101.
%\bibitem{weiss04} M.Weiss, \textit{Homology of spaces of smooth embeddings}, Q. J. Math. 55 (2004), no. 4, 499504.
\end{thebibliography}
\end{document}